\NewDocumentCommand{\dgal}{sO{}m}{%
  \IfBooleanTF{#1}
    {\dgalext{#3}}
    {\dgalx[#2]{#3}}%
}
\NewDocumentCommand{\dgalext}{m}{%
  \sbox0{%
    \mathsurround=0pt % just for safety
    $\left\{\vphantom{#1}\right.\kern-\nulldelimiterspace$%
  }%
  \sbox2{\{}%
  \ifdim\ht0=\ht2
    \{\kern-.45\wd2 \{#1\}\kern-.45\wd2 \}%
  \else
    \left\{\kern-.5\wd0\left\{#1\right\}\kern-.5\wd0\right\}%
  \fi
}
\NewDocumentCommand{\dgalx}{om}{%
  \sbox0{\mathsurround=0pt$#1\{$}%
  \sbox2{\{}%
  \ifdim\ht0=\ht2
    \{\kern-.45\wd2 \{#2\}\kern-.45\wd2 \}%
  \else
    \mathopen{#1\{\kern-.5\wd0 #1\{}
    #2
    \mathclose{#1\}\kern-.5\wd0 #1\}}
  \fi
}
\newtheorem{assumption}{Assumption}[section]
\newtheorem{algorithm}{Algorithm}[section]
\newcommand{\be}{\begin{equation}}
\newcommand{\ee}{\end{equation}}
\newcommand{\bse}{\begin{subequations}}
\newcommand{\ese}{\end{subequations}}
\newcommand{\bea}{\begin{eqnarray}}
\newcommand{\eea}{\end{eqnarray}}
\newcommand{\beas}{\begin{eqnarray*}}
\newcommand{\eeas}{\end{eqnarray*}}
\newcommand{\ba}{\begin{array}}
\newcommand{\ea}{\end{array}}
\newcommand{\sigmab}{\mbox{\boldmath$\sigma$}}
\font\msbm=msbm10
\newcommand{\R}{\hbox{{\msbm \char "52}}}
\newcommand{\euh}[2][]{#1\bm{e}_{h,#2}^{\bm{u}}}
\newcommand{\etph}[2][]{#1e_{h,#2}^{\tilde p}}
\newcommand{\balpha}{\bm{\alpha}}
\newcommand{\GI}{\mathfrak I}
\newcommand{\GQ}{\mathfrak Q}
\newcommand{\reff}[2]{\stackrel{\eqref{#1}}{#2}}	% to stack equations above (in)equalities
\definecolor{otherblue}{rgb}{0,0.3,0.6}
\def\rblf#1{\textcolor{black}{#1}}
\def\todo#1{\textcolor{red}{#1}}
\def\rbl#1{{\textcolor{black}{#1}}}  %otherblue
\def\rrb#1{{\textcolor{black}{#1}}}
\title{{Robust a posteriori error estimation for stochastic Galerkin formulations of parameter-dependent linear elasticity equations}\thanks{{This work was supported 
by  EPSRC grants EP/P013317/1 and EP/P013791/1. The authors would also like to thank the Isaac Newton Institute for Mathematical Sciences, Cambridge, for support and hospitality during the Uncertainty Quantification programme which was supported by EPSRC {grant} EP/K032208/1. The third author acknowledges the support of the Simons Foundation.}}}
\author{
Arbaz Khan\thanks{
School of Mathematics, University of Manchester, UK (\tt{arbaz.khan@manchester.ac.uk})}
\and
Alex Bespalov\thanks{
School of Mathematics, University of Birmingham, UK (\tt{A.Bespalov@bham.ac.uk})}
\and
Catherine E. Powell\thanks{
School of Mathematics, University of Manchester, UK (\tt{c.powell@manchester.ac.uk})}
\and
David J. Silvester\thanks{
School of Mathematics, University of Manchester, UK (\tt{d.silvester@manchester.ac.uk})}.
}
\begin{document}

\maketitle

\begin{abstract} The focus of this work is a posteriori error estimation for stochastic Galerkin approximations of parameter-dependent linear elasticity equations. The starting point is a three-field PDE model in which the Young's modulus is an affine function of a countable set of parameters. We analyse the weak formulation, its stability with respect to a weighted norm and \rbl{discuss} approximation using stochastic Galerkin mixed finite element methods (SG-MFEMs).  We introduce a novel a \rbl{posteriori} error estimation scheme and establish upper and lower bounds for the SG-MFEM error. \rbl{The} constants in the bounds are independent of the Poisson ratio as well as the SG-MFEM discretisation parameters. In addition, we \rbl{discuss} proxies for the error reduction associated with certain enrichments of the SG-MFEM spaces and \rbl{we} use these to develop an adaptive algorithm that terminates when the estimated error falls below a user-prescribed tolerance. We prove that both the a posteriori error estimate and the error reduction proxies are reliable and efficient in the incompressible limit case.  Numerical results are presented to validate the theory. \rbl{All experiments were performed using open source (IFISS) software that is available online.} 
\end{abstract}

\begin{keywords} uncertainty quantification, linear elasticity, mixed approximation, stochastic Galerkin finite element method, a posteriori error estimation, adaptivity.
\end{keywords}

\begin{AMS} 65N30, 65N15, 35R60.
\end{AMS}

\pagestyle{myheadings}

\thispagestyle{plain}
\markboth{A.\ KHAN, A. \ BESPALOV, C.\  E.\ POWELL and D.\  J.\  SILVESTER\ \ $|$\ \ \today}
{A posteriori error estimation for linear elasticity problems with {uncertain inputs}}

\section{Introduction}\label{sec11} 

The motivation for this work is the need to develop accurate and efficient numerical algorithms for solving linear elasticity problems in engineering applications where the Young's modulus $E$ of the material considered is spatially varying in an uncertain way.  Of particular interest is the nearly incompressible case, which poses a significant challenge for numerical methods, even when all the model inputs are known exactly.  A well known strategy for avoiding locking of finite element methods for standard elasticity problems is to introduce an auxiliary pressure variable, obtain a coupled system of partial differential equations (PDEs) and then apply mixed finite element methods, \cite{RLH, DR, KPS}.   In \cite{KPSpre}, Khan et al.~introduced a three-field PDE model with parameter-dependent Young's modulus which is amenable to discretisation by stochastic Galerkin mixed finite element methods (SG-MFEMs).  We revisit this model and introduce a novel a posteriori error estimation strategy for SG-MFEM approximations that is robust in the incompressible limit case. 

There  is little work to date on a posteriori error estimation and adaptivity for stochastic Galerkin approximations of \emph{mixed} formulations of linear elasticity problems. In \cite{MBBGS}, Matthies et al.~give a comprehensive overview of how to incorporate uncertainty into material parameters in linear elasticity problems and discuss stochastic finite element methods.  Several other works in the engineering literature also cover numerical aspects of implementing stochastic finite element methods for elasticity problems. For example, see \cite{ghanem_spanos, shang2013} and references therein.  A framework for residual-based a posteriori error estimation and adaptive stochastic Galerkin approximation for second order linear elliptic PDEs is presented in \cite{EIGEL2014247}. Numerical results are presented for planar linear elasticity problems but mixed formulations are not considered.  A priori analysis for so-called best $N$-term approximations of standard mixed formulations of stochastic and multiscale elasticity problems is provided in \cite{Hoang2014, Hoang2016}. 

The formal mathematical specification of the problem we consider is as follows. Let $D$ (the spatial domain) be a bounded Lipschitz polygon in $\R^2$ (polyhedron in $\R^3$) with boundary $\partial D = \partial D_D \cup \partial D_N$, where $\partial D_D \cap \partial D_N= \emptyset$ and $\partial D_D, \partial D_N\neq \emptyset$. Next, we introduce a vector of countably many parameters $\bm{y}=(y_1,y_2,\ldots)$ with each $y_{k} \in \Gamma_{k}=[-1,1].$ We model the Young's modulus in the linear elasticity equations as a parameter-dependent function of the form
\begin{align}\label{ecoef11}
E(\bm{x},\bm{y}):= e_0(\bm{x})+\sum_{k=1}^{\infty}e_k(\bm{x}) y_{k}, \quad \bm{x}\in D,\ \ \bm{y} \in \Gamma.
\end{align}
In \eqref{ecoef11}, $\Gamma = \Pi_{k=1}^{\infty} \Gamma_{k}$ denotes the parameter domain and $e_{0}$ typically represents the mean of $E$. The parameters $y_{k}$ are images of mean zero random variables and these encode our uncertainty about $E$. Picking a specific $\mathbf{y} \in \Gamma$ corresponds to generating a realisation of $E$. \rbl{As in \cite{KPSpre}, we consider the parametric problem}:
find $\bm{u}: D\times\Gamma \to \rrb{\mathbb{R}^{d}}$ and ${{p}}, \tilde{p}:D\times\Gamma \to \mathbb{R}$ such that
\begin{subequations}\label{os3}
\begin{align}\label{3fieldfor11}
 -\nabla\cdot\sigmab(\bm{x},\bm{y})& =\bm{f}(\bm{x}), \quad \bm{x}\in D, \;\bm{y}\in\Gamma, \\
 \nabla\cdot\bm{u}(\bm{x},\bm{y})+ \tilde{\lambda}^{-1} \tilde{p}(\bm{x},\bm{y}) &=0,\quad \quad \, \, \, \bm{x}\in D, \;\bm{y}\in\Gamma, \label{3fieldfor12}\\
\tilde{\lambda}^{-1} {p}(\bm{x},\bm{y}) - \tilde{\lambda}^{-1} E(\bm{x},\bm{y}) \tilde{p}(\bm{x},\bm{y}) &=0,\quad \quad \, \, \, \bm{x}\in D, \;\bm{y}\in\Gamma, \label{3fieldfor13}\\
 \bm{u}(\bm{x},\bm{y})&=\bm{g}(\bm{x}),\quad \bm{x} \in \partial D_D,\;\bm{y}\in\Gamma,\\
 \sigmab(\bm{x},\bm{y})\, \bm{n}&= \mathbf{0},\quad \quad \, \, \, \bm{x} \in \partial D_N,\;\bm{y}\in\Gamma.
\end{align}      
\end{subequations}
Here, $\sigmab : D\times\Gamma\rightarrow \R^{d\times d}\, (d=2,3)$ is the stress tensor,
$\bm{f}: D\rightarrow \R^{d}$ is the body force, $\bm{n}$ denotes the outward unit normal vector to $\partial D_N$,
$\bm{u}$ is the displacement (the solution field of main interest) and the auxiliary variables that we have introduced are $p: = -\lambda \nabla \cdot \bm{u}$ (the so-called Herrmann pressure \cite{RLH}) and $\tilde{p}:=p/E$.  Recall that $\sigmab$ is related to the strain tensor  $\bm{\varepsilon} : D\times\Gamma\rightarrow \R^{d\times d}$ through the identities 
${\sigmab}= 2 \mu \bm{\varepsilon}  - p \bm{I}$ and 
$\bm{\varepsilon} = \frac{1}{2} \left(\nabla \bm{u} + (\nabla \bm{u}\right)^{\top})$. 
The Lam\'{e} coefficients are 
\begin{align*}
\mu(\bm{x},\bm{y})
=\frac{E(\bm{x},\bm{y})}{2(1+\nu)}, \quad \lambda(\bm{x},\bm{y})=\frac{E(\bm{x},\bm{y})\nu}{(1+\nu)(1-2\nu)}
\end{align*}
and we have also introduced the constant
\begin{align*}
\tilde{\lambda} := {\frac{\lambda(\bm{x},\bm{y})}{E(\bm{x},\bm{y})}}= {\frac{\nu}{(1+\nu)(1-2\nu)}}.
\end{align*}
Note that we assume that the Poisson ratio $\nu$ (and hence $\tilde{\lambda}$) is a given fixed constant 
and that $0<\mu_1<\mu<\mu_2 <\infty$ and $0<\lambda<\infty$ a.e.~in $D \times \Gamma$. 
In contrast to other mixed formulations of the linear elasticity equations, the 
advantage of (\ref{os3}) is that while $E$ appears in the first and third equations, $E^{-1}$ 
does not appear at all. As a result, since $E$ has the affine form \eqref{ecoef11}, the discrete problem associated with SG-MFEM approximations has a structure that is relatively easy to exploit.

In Section \ref{EPWRD} we introduce the weak formulation of \eqref{os3} and discuss stability and well-posedness.  A key feature of our analysis is that we work with a $\nu$-dependent norm. In Section \ref{disformul}, we discuss SG-MFEM approximation and set up the associated finite-dimensional weak problem.  In Section \ref{sec4} we \rbl{describe} our a posteriori error estimation strategy, which requires the solution of four simple problems on detail spaces and the evaluation of one residual, and establish two-sided bounds for the true error in terms of the proposed {estimate}. The constants in the bounds are independent of the Poisson ratio, so that the {estimate} is robust in the incompressible limit $\nu \to 1/2$. In Section \ref{sec5} we 
 \rbl{introduce proxies  for the error reductions} that would be achieved by performing finite element mesh refinement, or by enriching the parametric part of the approximation space.  We establish two-sided bounds, showing that these \rbl{error reduction proxies} are efficient and reliable {and then use them} to develop an adaptive SG-MFEM algorithm. In Section \ref{sec6}, we briefly discuss the incompressible limit case. Finally, in Section \ref{sec7} we present numerical results.

\section{Weak formulation}\label{EPWRD}
Before stating the weak formulation of \eqref{os3}, we give conditions on the Young's modulus that are required to establish well-posedness and define appropriate solution spaces. Recall that $E$ has the form \eqref{ecoef11} with $y_{k} \in [-1,1]$.
%\begin{align}\label{ecoef11}
%E(\bm{x},\bm{y}):= e_0(\bm{x})+\sum_{k=1}^{\infty}e_k(\bm{x}) y_{k}, \quad\forall \bm{x}\in D,  \bm{y} \in \Gamma,
%\end{align}
%where $\Gamma=\Gamma_{1} \times \cdots \times \Gamma_\infty \subset \mathbb{R}^{\infty}$ is the parameter domain, and $\Gamma_k=[-1,1]$. 

\begin{assumption}\label{Assump2}
{The random field} $E\in L^{\infty}(D\times \Gamma)$ and is uniformly bounded away from zero. That is, there exist positive constants $E_{\min}$ and $E_{\max}$ such that
\begin{align}\label{bounde11}
0<E_{\min}\le E(\bm{x},\bm{y}) \le E_{\max} <\infty \quad \mbox{\rm{a.e. in}}\, D\times \Gamma.
\end{align}
{To ensure the lower bound in \eqref{bounde11} is satisfied}, we further assume that 
\be \label{bounde11x}
      0<e_0^{\min}\le e_0(\bm{x})\le e_0^{\max} < \infty \ \ \mbox{a.e. in}\ D \quad
      \mbox{and} \quad
      \frac{1}{e_0^{\min}}\sum_{k=1}^{\infty}|| e_k ||_{L^{\infty}(D)}<1.
\ee
\end{assumption}

Let $\pi(\bm{y})$ be a product measure with $\pi(\bm{y}):= \Pi_{k=1}^{\infty}\pi_k(y_{k})$, where $\pi_k$ is a measure on $(\Gamma_k, \mathcal{B}(\Gamma_k))$ and $\mathcal{B}(\Gamma_k)$ is the Borel $\sigma$-algebra on $\Gamma_k=[-1,1]$. We will assume that the parameters $y_{k}$ in \eqref{ecoef11} are images of independent uniform random variables $\xi_{k} \sim U(-1,1)$ and choose $\pi_{k}$ to be the associated probability measure. In this case, $\pi_{k}$ has density $\rho_{k} =1/2$ with respect to Lebesgue measure and
\begin{align*}
\int_{\Gamma_{k}} y_{k} \, d \pi_{k}(y_{k}) = \int_{\Gamma_{k}} y_{k} (1/2) d y_{k} = 0. 
\end{align*}
Now, given a normed linear space $X(D)$ of real-valued  functions on $D$ (either vector or scalar valued)
 with norm $||\cdot||_X$, we can define the Bochner space
\begin{align*}
L^2_{\pi}(\Gamma, X(D)) :=\left \{v(\bm{x},\bm{y}) : D\times\Gamma\rightarrow \mathbb{R}; ||v||_{L^2_{\pi}(\Gamma, X(D))} <\infty\right\},
\end{align*}
where
\begin{align}\label{bochner_norm}
||\cdot||_{L^2_\pi(\Gamma,X(D))} := \left(\int_{\Gamma}||\cdot||_X^2 d\pi(\bm{y})\right)^{1/2}.
\end{align} 
In particular, we will need the spaces
\begin{align}
\bm{\mathcal{V}} := L^2_{\pi}(\Gamma, \bm{H}^1_{E_0}(D)),\quad {\mathcal W}  := L^2_{\pi}(\Gamma, {L}^2(D))  \quad \mbox{and} \quad\bm{\mathcal{W}} := L^2_{\pi}(\Gamma, \bm{L}^2(D)),
\end{align}
where $\bm{H}^1_{E_0}(D)= \{ \bm{v} \in\bm{H}^1(D),  \bm{v} |_{\partial D_D}=\bm{0}\}$ 
and $\bm{H}^1(D)=\bm{H}^1(D;\mathbb{R}^{{d}})$ is the usual vector-valued Sobolev space. We denote the norms \eqref{bochner_norm} associated with $\bm{\mathcal{V}}$, ${\mathcal{W}}$ and $\bm{\mathcal{W}}$ by $|| \cdot ||_{\bm{\mathcal{V}}}$, $|| \cdot ||_{\mathcal{W}}$ and $|| \cdot ||_{\bm{\mathcal{W}}}$, respectively. 

Assume that the load function $\bm{f}\in ( L^{2}(D))^{{d}}$ and {the} boundary data $\bm{g}= \bm{0}$ on $\partial D_D$. Then, the weak formulation of 
\eqref{os3} is: find $(\bm{u},p,\tilde{p})\in \bm{\mathcal{V}}\times\mathcal{W}\times\mathcal{W}$ such that
\begin{subequations} \label{scm11a}
\begin{align}
a(\bm{u},\bm{v})+b(\bm{v},p)&=f(\bm{v}) \quad\forall \bm{v}\in \bm{\mathcal{V}},\\
b(\bm{u},q)-c(\tilde{p},q)&=0 \quad \quad \, \, \forall q\in \mathcal{W},\\
-c(p,\tilde{q})+d(\tilde{p},\tilde{q})&=0 \, \, \quad \quad \forall \tilde{q}\in \mathcal{W},
\end{align}
\end{subequations}
where we have
\begin{align}
   a(\bm{u},\bm{v}) &:=
   \alpha \int_{\Gamma}\int_{D}
   E(\bm{x},\bm{y}) \bm{\varepsilon}(\bm{u}(\bm{x},\bm{y})):\bm{\varepsilon}(\bm{v}(\bm{x},\bm{y}))d\bm{x}d\pi(\bm{y}),
   \label{fbf_a}
   \\
   b(\bm{v},p) &:=
   -\int_{\Gamma}\int_{D}  {p}(\bm{x},\bm{y}) \nabla \cdot \bm{v}(\bm{x},\bm{y}) d\bm{x}d\pi(\bm{y}),
   \label{fbf_b}
   \\
   c(p,q) &:=
   ({\alpha}\beta)^{-1}\int_{\Gamma}\int_{D} {p}(\bm{x},\bm{y}){q}(\bm{x},\bm{y})d\bm{x}d\pi(\bm{y}),
   \label{fbf_c}
   \\
   d(p,q) &:=
   ({\alpha}\beta)^{-1}\int_{\Gamma}\int_{D} E(\bm{x},\bm{y}) {p}(\bm{x},\bm{y}){q}(\bm{x},\bm{y})d\bm{x}d\pi(\bm{y}),
   \label{fbf_d}
   \\
   f(\bm{v})& : = \int_{\Gamma}\int_{D} f(\bm{x}) \bm{v}(\bm{x},\bm{y})d\bm{x}d\pi(\bm{y}),
   \label{fbf_f}
\end{align}
and we define the constants
\begin{align}
\alpha := \frac{1}{1+\nu}, \qquad \beta := \frac{\nu}{(1-2\nu)}.
\end{align}
Note that $\alpha \beta = \tilde\lambda$. Furthermore, it is important to note that {the constants} $\alpha$ and $\beta$ depend only on
the Poisson ratio $\nu$ which is assumed to be known. It will also be useful to define the combined bilinear form
\begin{align}\label{big_B}
\mathcal{B}(\bm{u},p,\tilde{p}; \bm{v},q,\tilde{q})=a(\bm{u},\bm{v})+b(\bm{v},p)+b(\bm{u},q)-c(\tilde{p},q)-c(p,\tilde{q})+d(\tilde{p},\tilde{q}),
\end{align}
so as to express (\ref{scm11a}) more compactly as: find $(\bm{u},p,\tilde{p})\in \bm{\mathcal{V}}\times\mathcal{W}\times\mathcal{W}$ such that 
\begin{align}\label{scm12}
\mathcal{B}(\bm{u},p,\tilde{p}; \bm{v},q,\tilde{q})=f(\bm{v}) \quad \forall (\bm{v},q,\tilde{q})\in \bm{\mathcal{V}}\times\mathcal{W}\times\mathcal{W}.
\end{align}
In the limit $\nu \to 1/2$, observe that $\beta \to \infty$ and so $c(\cdot, \cdot)$ and $d(\cdot, \cdot)$ disappear from \eqref{scm11a} and \eqref{scm12}, yielding a standard Stokes-like system. See Section \ref{sec6} for more details.

{Proofs of Lemmas \ref{tecre11}--\ref{welpose11} below can be found in \cite{KPSpre} and references therein}. Since these results are important for our analysis, we restate them here for completeness. The first result states that the bilinear forms in \eqref{scm11a} are bounded. The second states that three of them are coercive and $b(\cdot, \cdot)$ satisfies an inf-sup condition on $\bm{\mathcal{V}} \times {\mathcal W}$.

\begin{lemma}\label{tecre11}
 If $E$ satisfies Assumption \ref{Assump2}, then we have
\begin{align}
a(\bm{u},\bm{v}) & \le {\alpha} {E}_{\max}||\nabla\bm{u}||_{\bm{\mathcal{W}}} 
||\nabla\bm{v}||_{\bm{\mathcal{W}}} \quad \forall \bm{u}, \bm{v} \in \bm{\mathcal{V}}, \\
b(\bm{u}, p)&\le {\sqrt{d}} \, ||\nabla\bm{u}||_{\bm{\mathcal{W}}}||p||_{\mathcal{W}}
\quad \quad \quad \, \, \forall \bm{u} \in \bm{\mathcal{V}}, \, \forall p \in \mathcal{W}, \\
c(p,q)&\le ({\alpha}\beta)^{-1} ||p||_{\mathcal{W}} ||q||_{\mathcal{W}} \quad \quad \quad\forall p,q\in\mathcal{W},\\
d(p,q)&\le ({\alpha}\beta)^{-1} {E}_{\max} ||p||_{\mathcal{W}} ||q||_{\mathcal{W}}\quad\forall p,q\in\mathcal{W}.
\end{align}
\end{lemma}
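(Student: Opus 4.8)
The plan is to derive each of the four bounds from a pointwise (in $\bm{x}$ and $\bm{y}$) algebraic estimate followed by the Cauchy--Schwarz inequality in the Bochner inner product of $L^2_\pi(\Gamma,L^2(D))$. Here $\|\cdot\|_F$ denotes the Frobenius norm on $\mathbb{R}^{d\times d}$, and $\|\nabla\bm{w}\|_{\bm{\mathcal{W}}}$ is understood as the Bochner norm $(\int_\Gamma\int_D \|\nabla\bm{w}\|_F^2\,d\bm{x}\,d\pi(\bm{y}))^{1/2}$. Two elementary matrix facts will be used repeatedly: for any $M\in\mathbb{R}^{d\times d}$ one has $\mathrm{tr}(M)=\bm{I}:M\le \|\bm{I}\|_F\|M\|_F=\sqrt{d}\,\|M\|_F$ by the Cauchy--Schwarz inequality for matrices, and for $\bm{\varepsilon}(\bm{v})=\tfrac12(\nabla\bm{v}+(\nabla\bm{v})^{\top})$ the triangle inequality together with $\|(\nabla\bm{v})^{\top}\|_F=\|\nabla\bm{v}\|_F$ gives $\|\bm{\varepsilon}(\bm{v})\|_F\le\|\nabla\bm{v}\|_F$ pointwise.

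For $a(\cdot,\cdot)$ I would first invoke the upper bound $E(\bm{x},\bm{y})\le E_{\max}$ from Assumption \ref{Assump2} and the matrix Cauchy--Schwarz inequality $\bm{\varepsilon}(\bm{u}):\bm{\varepsilon}(\bm{v})\le\|\bm{\varepsilon}(\bm{u})\|_F\|\bm{\varepsilon}(\bm{v})\|_F$, so that pointwise $E\,\bm{\varepsilon}(\bm{u}):\bm{\varepsilon}(\bm{v})\le E_{\max}\|\bm{\varepsilon}(\bm{u})\|_F\|\bm{\varepsilon}(\bm{v})\|_F$. Integrating over $D\times\Gamma$, applying Cauchy--Schwarz in $L^2_\pi(\Gamma,L^2(D))$, and using $\|\bm{\varepsilon}(\bm{w})\|_{\bm{\mathcal{W}}}\le\|\nabla\bm{w}\|_{\bm{\mathcal{W}}}$ for $\bm{w}\in\{\bm{u},\bm{v}\}$ then yields the claimed estimate with constant $\alpha E_{\max}$. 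For $b(\cdot,\cdot)$, the pointwise bound $|\nabla\cdot\bm{u}|=|\mathrm{tr}(\nabla\bm{u})|\le\sqrt{d}\,\|\nabla\bm{u}\|_F$ gives $|p\,\nabla\cdot\bm{u}|\le\sqrt{d}\,|p|\,\|\nabla\bm{u}\|_F$, and integrating and applying Cauchy--Schwarz produces $b(\bm{u},p)\le\sqrt{d}\,\|\nabla\bm{u}\|_{\bm{\mathcal{W}}}\|p\|_{\mathcal{W}}$. The estimate for $c(\cdot,\cdot)$ is immediate from Cauchy--Schwarz in $L^2_\pi(\Gamma,L^2(D))$, since $c(p,q)=(\alpha\beta)^{-1}\int_\Gamma\int_D pq\,d\bm{x}\,d\pi(\bm{y})$; and for $d(\cdot,\cdot)$ one bounds $E\le E_{\max}$ first and then applies Cauchy--Schwarz, giving the constant $(\alpha\beta)^{-1}E_{\max}$.

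No step here presents a genuine difficulty; the only points requiring a little care are the two pointwise matrix inequalities above and the bookkeeping needed to recover exactly the constants $\alpha E_{\max}$, $\sqrt{d}$, $(\alpha\beta)^{-1}$ and $(\alpha\beta)^{-1}E_{\max}$. Note that only the uniform \emph{upper} bound on $E$ from Assumption \ref{Assump2} is used; the lower bound condition \eqref{bounde11x} is not needed for boundedness, and will instead enter when proving coercivity and the inf-sup condition.
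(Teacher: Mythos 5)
Your proof is correct: the pointwise bounds $E\le E_{\max}$, $\|\bm{\varepsilon}(\bm{v})\|_F\le\|\nabla\bm{v}\|_F$ and $|\nabla\cdot\bm{u}|\le\sqrt{d}\,\|\nabla\bm{u}\|_F$, followed by Cauchy--Schwarz in $L^2_\pi(\Gamma,L^2(D))$, deliver exactly the stated constants, and you are right that only the upper bound on $E$ is needed here. The paper itself does not spell out a proof (it simply cites \cite[Lemma 2.2]{KPSpre}), and your argument is the standard one that the cited result rests on, so there is nothing to add.
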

\begin{lemma}\label{tecre12}
If $E$ satisfies Assumption \ref{Assump2}, then we have
\begin{align}
a(\bm{u},\bm{u})& \ge {\alpha} {E}_{\min} 
C_{K}||\nabla\bm{u}||_{\bm{\mathcal{W}}}^2\quad\forall \bm{u}\in \bm{\mathcal{V}}, 
\label{abound}\\
c(p,p)&\ge  ({\alpha}\beta)^{-1} ||p||^2_{\mathcal{W}} \quad \quad \, \quad\forall  p\in \mathcal{W},
\label{cbound} \\
d(p,p)&\ge  ({\alpha}\beta)^{-1}{E}_{\min} ||p||^2_{\mathcal{W}}\quad\forall  p\in \mathcal{W}, \label{dubound}
\end{align}
where ${0<C_K \leq 1}$ is the Korn constant. In addition, 
there exists a constant $C_{D}>0$ (the inf-sup constant) such that
\begin{align}\label{first_inf_sup}
\sup_{0\neq\bm{v}\in {\bm{\mathcal{V}}}}
\frac{b(\bm{v},q)}{ ||\nabla\bm{v}||_{\bm{\mathcal{W}}} }&\ge C_{D} 
 ||q||_{\mathcal{W}} \quad\forall  q\in \mathcal{W}.
 \end{align}
\end{lemma}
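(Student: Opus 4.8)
The plan is to treat \eqref{abound}--\eqref{dubound} as pointwise-in-$\bm{y}$ estimates that are integrated against the measure $\pi$, and to obtain the inf-sup bound \eqref{first_inf_sup} by lifting the classical deterministic inf-sup stability of the divergence operator to the Bochner space $\bm{\mathcal V}$. The bounds \eqref{cbound} and \eqref{dubound} are the easy ones: from \eqref{fbf_c} we have $c(p,p) = (\alpha\beta)^{-1}\|p\|_{\mathcal{W}}^2$ exactly, and from \eqref{fbf_d} together with the lower bound in \eqref{bounde11} from Assumption \ref{Assump2},
\[
d(p,p) = (\alpha\beta)^{-1}\!\int_\Gamma\!\!\int_D E(\bm{x},\bm{y})\,p(\bm{x},\bm{y})^2\,d\bm{x}\,d\pi(\bm{y}) \ge (\alpha\beta)^{-1}E_{\min}\|p\|_{\mathcal{W}}^2 .
\]

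For \eqref{abound} I would first discard the parameter dependence of $E$ using $E(\bm{x},\bm{y})\ge E_{\min}$ a.e., giving $a(\bm{u},\bm{u}) \ge \alpha E_{\min}\int_\Gamma\!\int_D \bm{\varepsilon}(\bm{u}):\bm{\varepsilon}(\bm{u})\,d\bm{x}\,d\pi(\bm{y})$. Then, for $\pi$-a.e.\ fixed $\bm{y}$, I apply Korn's first inequality on the fixed Lipschitz domain $D$ with the homogeneous Dirichlet condition imposed on $\partial D_D$ (which has positive surface measure), yielding a constant $C_K\in(0,1]$ depending only on $D$ and $\partial D_D$ — and therefore \emph{independent of} $\bm{y}$ — with $\|\bm{\varepsilon}(\bm{u}(\cdot,\bm{y}))\|_{\bm{L}^2(D)}^2 \ge C_K\|\nabla\bm{u}(\cdot,\bm{y})\|_{\bm{L}^2(D)}^2$. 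Integrating this inequality over $\Gamma$ against $\pi$ and recalling the definition of $\|\cdot\|_{\bm{\mathcal{W}}}$ gives \eqref{abound}.

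For \eqref{first_inf_sup} the key input is the deterministic fact that, since $D$ is a bounded Lipschitz domain and $\partial D_D$ has positive measure, the divergence operator $\nabla\cdot:\bm{H}^1_{E_0}(D)\to L^2(D)$ is surjective and admits a \emph{bounded linear} right inverse $\mathcal{S}:L^2(D)\to\bm{H}^1_{E_0}(D)$, i.e.\ $-\nabla\cdot(\mathcal{S}q)=q$ and $\|\nabla(\mathcal{S}q)\|_{\bm{L}^2(D)}\le C_D^{-1}\|q\|_{L^2(D)}$, with $C_D>0$ the classical inf-sup constant of $D$. Given $q\in\mathcal{W}$, set $\bm{v}(\cdot,\bm{y}):=\mathcal{S}(q(\cdot,\bm{y}))$; since $\mathcal{S}$ is a fixed bounded linear map and $\bm{y}\mapsto q(\cdot,\bm{y})$ is strongly measurable into $L^2(D)$, the composition $\bm{v}$ is strongly measurable into $\bm{H}^1_{E_0}(D)$ and satisfies $\|\nabla\bm{v}\|_{\bm{\mathcal{W}}}\le C_D^{-1}\|q\|_{\mathcal{W}}<\infty$, so $\bm{v}\in\bm{\mathcal{V}}$. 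Then, using \eqref{fbf_b} and $-\nabla\cdot\bm{v}(\cdot,\bm{y})=q(\cdot,\bm{y})$,
\[
b(\bm{v},q) = \int_\Gamma\!\!\int_D q(\bm{x},\bm{y})^2\,d\bm{x}\,d\pi(\bm{y}) = \|q\|_{\mathcal{W}}^2 \ge C_D\,\|\nabla\bm{v}\|_{\bm{\mathcal{W}}}\,\|q\|_{\mathcal{W}},
\]
and dividing by $\|\nabla\bm{v}\|_{\bm{\mathcal{W}}}$ and passing to the supremum over $\bm{\mathcal{V}}$ gives \eqref{first_inf_sup}.

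The $E_{\min}$ estimates and the use of Korn's inequality are routine. The only step needing care is the lifting of the inf-sup condition: one must check that applying the deterministic right inverse $\bm{y}$-wise actually returns an admissible element of the Bochner space. Invoking the \emph{linearity and boundedness} of $\mathcal{S}$ — rather than a measurable selection theorem applied to an arbitrary right inverse — handles the measurability cleanly, and the uniformity of $C_D$ (and $C_K$) in $\bm{y}$ is automatic because $D$ and $\partial D_D$ do not depend on $\bm{y}$.
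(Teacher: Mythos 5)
Your proof is correct: \eqref{cbound} and \eqref{dubound} follow at once from Assumption \ref{Assump2}, \eqref{abound} follows from $E\ge E_{\min}$ plus the $\bm{y}$-independent Korn inequality applied pointwise in $\bm{y}$ and integrated against $\pi$, and \eqref{first_inf_sup} follows by lifting the deterministic bounded right inverse of the divergence to the Bochner space, with the measurability of $\bm{y}\mapsto\mathcal{S}(q(\cdot,\bm{y}))$ correctly justified by the linearity and boundedness of $\mathcal{S}$. The paper does not reproduce an argument here (it simply cites \cite{KPSpre}), and your pointwise-in-$\bm{y}$ tensorisation of the deterministic Korn and inf-sup estimates is exactly the standard route used there, so no changes are needed.
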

\begin{proof}
{See \cite[Lemma 2.2]{KPSpre}}. 
\end{proof}

% A few additional remarks about \eqref{first_inf_sup} are warranted. To establish \eqref{first_inf_sup} when $D$ is convex, we can follow \cite[Lemma 7.2]{bespalov2012priori}. If $D$ %is not convex, we can use similar arguments as in \cite[Lemma 11.2.3]{brenner2007mathematical} (for more details see also  \cite{VGPAR} and \cite{arnold1987regular}). In either %case, the constant $C_{D}$ depends only on the domain $D$. 

Now, to establish that the weak problem is well-posed, and to analyse the error associated with stochastic Galerkin approximations of the weak solution, we will work with a 
$\nu$-dependent norm $||| \cdot |||$ on $\bm{\mathcal{V}}\times\mathcal{W}\times\mathcal{W}$, 
defined by
\begin{align}\label{normden}
|||(\bm{v},q,\tilde{q})|||^2 := {\alpha}||\nabla\bm{v}||_{\bm{\mathcal{W}}}^2+({\alpha}^{-1}+({\alpha}\beta)^{-1})||q||_{\mathcal{W}}^2+({\alpha}\beta)^{-1}||\tilde{q}||_{\mathcal{W}}^2.
\end{align}
The well-posedness of \eqref{scm12} is established in~\cite[Theorem 2.4]{KPSpre}, 
using the following stability result.
\smallskip
 \begin{lemma}\label{welpose11} (\cite[Lemma 2.3]{KPSpre})
 If Assumption \ref{Assump2} holds, then for any 
 $(\bm{u},p,\tilde{p})\in \bm{\mathcal{V}}\times\mathcal{W}\times\mathcal{W}$, there exists a
 $(\bm{v},q,\tilde{q})\in \bm{\mathcal{V}}\times\mathcal{W}\times\mathcal{W}$ 
 with $ |||(\bm{v},q,  \tilde{q} )|||\le C_2 \, |||(\bm{u},p,\tilde{p})|||$, satisfying
 \begin{align}\label{lower_S_bound}
\mathcal{B}(\bm{u},p,\tilde{p}; \bm{v},q,\tilde{q})\ge {E}_{\min}\, C_1 \, |||(\bm{u},p,  \tilde{p})|||^2,
 \end{align}
 where $C_1 :=\frac{1}{2}\min\{C_K,\frac{C_D^2}{E_{\max}^2},\frac{1}{E_{max}^2}\}$ 
 and $C_2:=\sqrt{2(E_{\max}^2+C_D^2+1)}/{E_{\max}}$.
  \end{lemma}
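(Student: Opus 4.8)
The plan is to prove the stability estimate \eqref{lower_S_bound} constructively: given $(\bm{u},p,\tilde{p})$, I would exhibit the test triple $(\bm{v},q,\tilde{q})$ explicitly as a linear combination of three elementary choices, and then read off the accompanying norm bound from the triangle inequality. Start with the symmetric choice $(\bm{u},-p,\tilde{p})$. The off-diagonal terms cancel, since $b(\bm{u},p)+b(\bm{u},-p)=0$ and $-c(\tilde{p},-p)-c(p,\tilde{p})=0$ by symmetry of $c(\cdot,\cdot)$, leaving $\mathcal{B}(\bm{u},p,\tilde{p};\bm{u},-p,\tilde{p})=a(\bm{u},\bm{u})+d(\tilde{p},\tilde{p})\ge\alpha E_{\min}C_K\|\nabla\bm{u}\|_{\bm{\mathcal{W}}}^2+(\alpha\beta)^{-1}E_{\min}\|\tilde{p}\|_{\mathcal{W}}^2$ by Lemma~\ref{tecre12}. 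This already controls the first and third components of $|||\cdot|||$; the missing ingredient is $\|p\|_{\mathcal{W}}$, whose weight in \eqref{normden} is $\alpha^{-1}+(\alpha\beta)^{-1}$, and the two summands of this weight will be recovered from two separate corrections.

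For the $(\alpha\beta)^{-1}\|p\|_{\mathcal{W}}^2$ part, observe that $c(p,p)=(\alpha\beta)^{-1}\|p\|_{\mathcal{W}}^2$ appears as soon as the third slot is perturbed to $\tilde{p}-\gamma p$: this adds $\gamma c(p,p)-\gamma d(\tilde{p},p)$ to $\mathcal{B}$, and the cross-term $\gamma d(\tilde{p},p)$ is controlled by the boundedness of $d(\cdot,\cdot)$ (Lemma~\ref{tecre11}) and absorbed, via Young's inequality, into a fraction of $d(\tilde{p},\tilde{p})$; the choice $\gamma=E_{\min}/E_{\max}^2$ makes the absorbed part equal to $\tfrac12(\alpha\beta)^{-1}E_{\min}\|\tilde{p}\|_{\mathcal{W}}^2$ and leaves a positive multiple of $(\alpha\beta)^{-1}\|p\|_{\mathcal{W}}^2$. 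For the $\alpha^{-1}\|p\|_{\mathcal{W}}^2$ part I would invoke the inf-sup condition \eqref{first_inf_sup}: pick $\bm{w}\in\bm{\mathcal{V}}$ with $b(\bm{w},p)=-\|p\|_{\mathcal{W}}^2$ and $\|\nabla\bm{w}\|_{\bm{\mathcal{W}}}\le C_D^{-1}\|p\|_{\mathcal{W}}$, and perturb the first slot to $\bm{u}-\theta\bm{w}$. Keeping $q=-p$ preserves the earlier cancellation and produces the extra term $\theta\|p\|_{\mathcal{W}}^2-\theta a(\bm{u},\bm{w})$; the cross-term is bounded using Lemma~\ref{tecre11} and Young's inequality, absorbing an $\alpha$-weighted multiple of $\|\nabla\bm{u}\|_{\bm{\mathcal{W}}}^2$ into the coercivity bound for $a(\cdot,\cdot)$, and taking $\theta$ proportional to $\alpha^{-1}$ makes the surviving $\|p\|_{\mathcal{W}}^2$ contribution scale like $\alpha^{-1}$, matching the weight in $|||\cdot|||$.

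Assembling, I would test \eqref{scm12} with $(\bm{v},q,\tilde{q})=(\bm{u}-\theta\bm{w},\,-p,\,\tilde{p}-\gamma p)$. Collecting the coercive contributions and the two Young estimates yields a bound of the form $\mathcal{B}(\bm{u},p,\tilde{p};\bm{v},q,\tilde{q})\ge\tfrac12\alpha E_{\min}C_K\|\nabla\bm{u}\|_{\bm{\mathcal{W}}}^2+\big(c_1\alpha^{-1}+c_2(\alpha\beta)^{-1}\big)E_{\min}\|p\|_{\mathcal{W}}^2+\tfrac12(\alpha\beta)^{-1}E_{\min}\|\tilde{p}\|_{\mathcal{W}}^2$, with $c_1,c_2>0$ depending only on $C_K,C_D,E_{\min},E_{\max}$; optimising the free parameters $\theta,\gamma$ and the Young weights makes the right-hand side bounded below by $E_{\min}C_1|||(\bm{u},p,\tilde{p})|||^2$ with $C_1=\tfrac12\min\{C_K,C_D^2/E_{\max}^2,1/E_{\max}^2\}$. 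The bound $|||(\bm{v},q,\tilde{q})|||\le C_2|||(\bm{u},p,\tilde{p})|||$ then follows from the triangle inequality, the estimate $\|\nabla\bm{w}\|_{\bm{\mathcal{W}}}\le C_D^{-1}\|p\|_{\mathcal{W}}$, and the explicit values of $\theta$ and $\gamma$, using only $\alpha\le1$, $\beta>0$, $C_K\le1$ and $E_{\min}\le E_{\max}$.

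The routine content is the quadratic-form bookkeeping after the substitution. The one delicate point is to keep every application of Young's inequality consistent with the $\nu$-weighting built into $|||\cdot|||$ — it is this requirement that forces the scalings $\theta\propto\alpha^{-1}$ and $\gamma\propto E_{\min}/E_{\max}^2$ and, in turn, makes $C_1$ and $C_2$ independent of the Poisson ratio. The $d(\tilde{p},p)$ cross-term is the reason $1/E_{\max}^2$ enters the minimum defining $C_1$, and the inf-sup correction is the reason $C_D^2/E_{\max}^2$ does.
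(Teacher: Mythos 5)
Note first that the paper does not contain its own proof of this lemma: it is quoted from \cite[Lemma~2.3]{KPSpre}, so the only comparison possible is with the standard argument that the citation presumably records. Your construction is exactly that standard argument: test with $(\bm{v},q,\tilde{q})=(\bm{u}-\theta\bm{w},\,-p,\,\tilde{p}-\gamma p)$, where $\bm{w}$ comes from the inf--sup condition \eqref{first_inf_sup}, use symmetry of $c$ and cancellation of the $b$-terms, and absorb the two cross terms $\theta a(\bm{u},\bm{w})$ and $\gamma d(\tilde{p},p)$ by Young's inequality. The algebra you describe is correct, i.e.\ $\mathcal{B}(\bm{u},p,\tilde{p};\bm{v},q,\tilde{q})=a(\bm{u},\bm{u})+d(\tilde{p},\tilde{p})+\theta\|p\|_{\mathcal{W}}^2+\gamma c(p,p)-\theta a(\bm{u},\bm{w})-\gamma d(\tilde{p},p)$; the scalings $\theta\propto\alpha^{-1}$ and $\gamma=E_{\min}/E_{\max}^2$ are the right ones for the weighted norm \eqref{normden}; and your triangle-inequality bound for $|||(\bm{v},q,\tilde{q})|||$ does deliver the stated $C_2$. (One small point worth a sentence: the existence of $\bm{w}$ with $b(\bm{w},p)=-\|p\|_{\mathcal{W}}^2$ and $\|\nabla\bm{w}\|_{\bm{\mathcal{W}}}\le C_D^{-1}\|p\|_{\mathcal{W}}$ is the bounded-right-inverse reformulation of \eqref{first_inf_sup} on the Bochner spaces, not literally the sup statement.)

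The only genuine discrepancy is your claim that optimising $\theta$ recovers the stated $C_1$. If you first pass to \eqref{abound}, $a(\bm{u},\bm{u})\ge\alpha E_{\min}C_K\|\nabla\bm{u}\|_{\bm{\mathcal{W}}}^2$, and absorb $\theta|a(\bm{u},\bm{w})|\le\theta\alpha E_{\max}C_D^{-1}\|\nabla\bm{u}\|_{\bm{\mathcal{W}}}\|p\|_{\mathcal{W}}$ into half of that, then the best surviving coefficient of $\alpha^{-1}\|p\|_{\mathcal{W}}^2$ is $E_{\min}C_KC_D^2/(2E_{\max}^2)$, which falls short of $E_{\min}C_1$ by the factor $C_K$ whenever $C_D^2/E_{\max}^2$ attains the minimum and $C_K<1$; your bookkeeping therefore proves the lemma only with $C_1$ replaced by $\tfrac12\min\{C_K,\,C_KC_D^2/E_{\max}^2,\,1/E_{\max}^2\}$. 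This is harmless for every later use of the lemma (the constant is still positive and independent of $\nu$, $h$ and $\Lambda$), but to land exactly on $C_1=\tfrac12\min\{C_K,\,C_D^2/E_{\max}^2,\,1/E_{\max}^2\}$ you should absorb the $a(\bm{u},\bm{w})$ term against the Korn-free coercivity $a(\bm{u},\bm{u})\ge\alpha E_{\min}\|\bm{\varepsilon}(\bm{u})\|_{\bm{\mathcal{W}}}^2$, bounding $a(\bm{u},\bm{w})\le\alpha E_{\max}\|\bm{\varepsilon}(\bm{u})\|_{\bm{\mathcal{W}}}\|\nabla\bm{w}\|_{\bm{\mathcal{W}}}$, and invoke Korn's inequality only on the retained half; with $\theta=\alpha^{-1}E_{\min}C_D^2/E_{\max}^2$ this yields precisely $E_{\min}C_D^2/(2E_{\max}^2)\ge E_{\min}C_1$, while the $\gamma$-step and the $C_2$ estimate go through unchanged.
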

% \begin{proof}
% See \cite[Lemma 2.3]{KPSpre}.
% \end{proof}

%\smallskip

We will make all the constants that appear in our error bounds explicit. Hence, we stress that $C_{K}$ is the Korn constant, $C_{D}$ is the inf-sup constant from \eqref{first_inf_sup} (depending only on $D$), and $E_{\min}$ and $E_{\max}$ are upper and lower bounds for $E$ as defined in \eqref{bounde11}.

\section{Stochastic Galerkin mixed finite element approximation}\label{disformul}

To approximate solutions to \eqref{scm11a}, or equivalently \eqref{scm12}, we begin by choosing finite-dimensional subspaces of $\bm{\mathcal{V}}$ and ${\mathcal{W}}$. Our construction exploits the fact that $\bm{\mathcal{V}}  \cong  \bm{H}_{E_{0}}^{1}(D) \otimes L_{\pi}^{2}(\Gamma)$ and ${\mathcal{W}} \cong L^{2}(D) \otimes L_{\pi}^{2}(\Gamma)$
(i.e., that the spaces are isometrically isomorphic). First, we introduce a mesh ${\mathcal T}_{h}$ on the physical domain $D$ with characteristic mesh size $h$ and choose a pair of conforming finite element spaces
\begin{align*}
V_{h}  = \textrm{span} \left\{ \phi_{r}(\bm{x}) \right \}_{r=1}^{n_{u}}\subset H_{E_{0}}^{1}(D), \qquad
 W_{h}  = \textrm{span} \left\{ \varphi_{s}(\bm{x}) \right \}_{s=1}^{n_{p}} \subset L^{2}(D).
\end{align*} 
We then define $\bm{V}_{h}$ to be the space of vector-valued functions whose components are in $V_{h}$.  We need $\bm{V}_{h}$ and $W_{h}$ to be compatible in the sense that they satisfy the {\it discrete} (deterministic) inf--sup condition 
\begin{align}\label{discrete_inf_sup}
\sup_{0\neq\bm{v}\in {\bm{V}_{h}}}
\frac{\int_D q \,\nabla\cdot\bm{v} }{ ||\nabla\bm{v}||_{\bm{L}^2(D)} }&\ge \gamma_h
 ||q||_{L^2(D)} \quad\forall  q\in W_h,
 \end{align}
with $\gamma_h$ uniformly bounded away from zero (i.e., independent of $h$). Two specific pairs of inf--sup stable spaces $\bm{V}_{h}$--$W_{h}$ on meshes of rectangular elements are $\bm{Q}_2$--${Q}_1$ (continuous biquadratic approximation for the displacement and continuous bilinear approximation for the pressures) and $\bm{Q}_2$--${P}_{-1}$ (continuous biquadratic approximation for the displacement and discontinuous linear approximation for the pressures). 

%\footnote{Both of these mixed approximation strategies are inf--sup stable in a three-dimensional setting.} 
Next, we consider the parametric discretisation. Let $\{ \psi_{i}(y_{j}), i=0,1, \ldots\}$ denote the set of univariate Legendre polynomials on $\Gamma_{j}  = [-1,1]$, where $ j \in\mathbb{N}$ and $\psi_{i}$ has degree $i$. We fix $\psi_{0}=1$ and assume that the 
polynomials are normalised so that  
$\int_{\Gamma_{j}} \psi_{i}(y_{j}) \psi_{k}(y_{j}) d\pi_{j}(y_{j}) = \delta_{i,k}$.
Now we define the set of finitely supported sequences
$\mathfrak{I} :=\{\bm{\alpha}=(\alpha_1,\alpha_2,\ldots)\in \mathbb{N}_{0}^{\mathbb{N}};\; \# \,\rm{supp}\, \bm{\alpha} <\infty\},$
where ${\rm supp}\, \bm{\alpha} :=\{ m\in\mathbb{N};\, \alpha_m\neq0\}$ for any $\bm{\alpha}\in \mathfrak{I}$.  
The set $\mathfrak{I}$ and its subsets will be called `index sets'.  Combining these ingredients, we can define the countable set of multivariate tensor product polynomials
 \begin{align}
\psi_{\boldsymbol{\alpha}}(\bm{y}): = \prod_{i=1}^{\infty} \psi_{\alpha_{i}}(y_{i}) \quad\forall \bm{\alpha}\in\mathfrak{I}
\end{align}
which forms an orthonormal basis for $L_{\pi}^{2}(\Gamma)$. Now, given any finite index set $ \Lambda \subset \mathfrak{I}$, 
we can define a finite-dimensional subspace of  $L_{\pi}^{2}(\Gamma)$ as follows:
\begin{align}
S_{\Lambda}: =\textrm{span}\left\{ \psi_{\boldsymbol{\alpha}}; \;\; \boldsymbol{\alpha} 
 \in \Lambda \right\}.
\end{align}
Note that only a \emph{finite} number of parameters $y_{k}$ play a role in the definition of $S_{\Lambda}$.

We now define the SG-MFEM approximation spaces
\begin{align*}
\bm{V}_{h,\Lambda}:=\bm{V}_{h} \otimes S_{\Lambda}, \qquad W_{h,\Lambda}:=W_{h} \otimes S_{\Lambda},
\end{align*}
and consider the problem: find
 $({\bm{u}_{h,\Lambda}}, {p_{h,\Lambda}}, {\tilde{p}_{h, \Lambda}})
  \in \bm{V}_{h, \Lambda} \times W_{h,\Lambda} \times W_{h,\Lambda}$ such that
 \begin{subequations} \label{disver11}
\begin{align}
a({\bm{u}_{h,\Lambda}},\bm{v})+b(\bm{v},{p_{h,\Lambda}})&=f(\bm{v}) 
\quad \, \, \forall \bm{v}\in \bm{V}_{h,\Lambda},\\
b({\bm{u}_{h,\Lambda}},q)-c({\tilde{p}_{h, \Lambda}},q)&=0 
\quad \qquad \forall q\in  W_{h,\Lambda},\\
-c({p_{h,\Lambda}},\tilde{q})+d({\tilde{p}_{h, \Lambda}},\tilde{q})&=0 
\quad \qquad \forall \tilde{q}\in  W_{h,\Lambda}.
\end{align}
\end{subequations}
The well-posedness of \eqref{disver11} is established using the following discrete stability result.

\begin{lemma}\label{diswelpose11} 
If $E$ satisfies Assumption \ref{Assump2}, and $\bm{V}_{h}, W_{h}$ satisfy
the inf-sup condition \eqref{discrete_inf_sup} with $\gamma_{h} >0$, then for every
$(\bm{u}_{h,\Lambda},p_{h, \Lambda},\tilde{p}_{h,\Lambda})\in\bm{V}_{h,\Lambda}
 \times {W}_{h,\Lambda} \times {W}_{h, \Lambda}$,
there exists $(\bm{v},q,\tilde{q})\in \bm{V}_{h,\Lambda}  \times {W}_{h,\Lambda} \times {W}_{h, \Lambda}$ 
with
$|||(\bm{v},q,\tilde{q})|||\le C_2^\ast \,  |||(\bm{u}_{h,\Lambda},p_{h,\Lambda},\tilde{p}_{h,\Lambda})|||$,
satisfying 
\begin{align}\label{dislower_S_bound}
\mathcal{B}(\bm{u}_{h,\Lambda},p_{h,\Lambda},\tilde{p}_{h,\Lambda}; 
\bm{v},q,\tilde{q})\ge {E}_{\min}\, C_1^\ast \,
 |||(\bm{u}_{h,\Lambda},p_{h,\Lambda},\tilde{p}_{h,\Lambda})|||^2,
\end{align}
where $C_1^\ast :=\frac{1}{2}\min\{C_K,\frac{\gamma^2_h}{E_{\max}^2},\frac{1}{E_{\max}^2}\}$,
% and 
$C_2^\ast :=\sqrt{2(E_{\max}^2+\gamma^2_h+1)}/{E_{\max}}$ and $C_K$ is the Korn constant.
\end{lemma}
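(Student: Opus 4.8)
The plan is to mimic, in the discrete setting, the abstract stability argument that establishes Lemma~\ref{welpose11}, exploiting the fact that the discrete spaces $\bm V_{h,\Lambda}$, $W_{h,\Lambda}$ are \emph{conforming} subspaces of $\bm{\mathcal V}$, $\mathcal W$, so that the bilinear forms $a$, $b$, $c$, $d$ and hence $\mathcal B$ act on them exactly as before. The only structural difference is that the continuous inf--sup constant $C_D$ from \eqref{first_inf_sup} is no longer available on $\bm V_{h,\Lambda}\times W_{h,\Lambda}$; instead we must use the \emph{discrete} inf--sup condition. I would first observe that, because $\bm V_{h,\Lambda}=\bm V_h\otimes S_\Lambda$ and $W_{h,\Lambda}=W_h\otimes S_\Lambda$ share the same parametric factor $S_\Lambda$, the deterministic inf--sup condition \eqref{discrete_inf_sup} tensorises: for every $q\in W_{h,\Lambda}$ there is $\bm v\in\bm V_{h,\Lambda}$ with $b(\bm v,q)\ge\gamma_h\|q\|_{\mathcal W}\|\nabla\bm v\|_{\bm{\mathcal W}}$. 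Concretely, expand $q=\sum_{\balpha\in\Lambda}q_\balpha(\bm x)\psi_\balpha(\bm y)$, apply \eqref{discrete_inf_sup} to each spatial coefficient $q_\balpha$ to get $\bm v_\balpha\in\bm V_h$, set $\bm v=\sum_\balpha \bm v_\balpha\psi_\balpha$, and use orthonormality of the $\psi_\balpha$ together with the product measure structure in \eqref{fbf_b} to conclude. This is the discrete analogue of \eqref{first_inf_sup} with $C_D$ replaced by $\gamma_h$.

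With the discrete inf--sup condition in hand, the remainder of the argument is a verbatim transcription of the proof of \cite[Lemma~2.3]{KPSpre}. Given $(\bm u_{h,\Lambda},p_{h,\Lambda},\tilde p_{h,\Lambda})$, one constructs the test tuple $(\bm v,q,\tilde q)\in\bm V_{h,\Lambda}\times W_{h,\Lambda}\times W_{h,\Lambda}$ as a suitable linear combination of $(\bm u_{h,\Lambda},p_{h,\Lambda},\tilde p_{h,\Lambda})$ and of the inf--sup supremizer associated with $p_{h,\Lambda}$: schematically, $\bm v=\bm u_{h,\Lambda}+\delta\,\bm w$ where $\bm w\in\bm V_{h,\Lambda}$ is the (normalised) field delivered by the discrete inf--sup bound applied to $q=p_{h,\Lambda}$, while $q$ and $\tilde q$ are chosen proportional to $-p_{h,\Lambda}$, $-\tilde p_{h,\Lambda}$ (or whatever signs \cite{KPSpre} uses), with $\delta>0$ a small parameter fixed at the end. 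Inserting this into $\mathcal B$, using the coercivity bounds \eqref{abound}--\eqref{dubound} for the diagonal terms $a(\bm u,\bm u)$, $c(\tilde p,\tilde p)$, $d(\tilde p,\tilde p)$, the boundedness bounds of Lemma~\ref{tecre11} for the cross terms (estimated with Young's inequality to absorb them into the coercive terms), and the discrete inf--sup gain $b(\bm w,p_{h,\Lambda})\ge\gamma_h\|p_{h,\Lambda}\|_{\mathcal W}$, one obtains a lower bound of the form $E_{\min}C_1^\ast|||(\bm u_{h,\Lambda},p_{h,\Lambda},\tilde p_{h,\Lambda})|||^2$ after optimising $\delta$; tracking the constants reproduces $C_1^\ast=\tfrac12\min\{C_K,\gamma_h^2/E_{\max}^2,1/E_{\max}^2\}$. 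The continuity bound $|||(\bm v,q,\tilde q)|||\le C_2^\ast|||(\bm u_{h,\Lambda},p_{h,\Lambda},\tilde p_{h,\Lambda})|||$ follows from the triangle inequality, the normalisation $\|\nabla\bm w\|_{\bm{\mathcal W}}\le\|p_{h,\Lambda}\|_{\mathcal W}$ (or $\le 1$, depending on the scaling), and the definition \eqref{normden} of the norm, giving $C_2^\ast=\sqrt{2(E_{\max}^2+\gamma_h^2+1)}/E_{\max}$.

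The one point requiring genuine care — and the main obstacle — is the tensorisation step: one must verify that the supremizer $\bm w$ built blockwise from \eqref{discrete_inf_sup} genuinely lies in $\bm V_{h,\Lambda}=\bm V_h\otimes S_\Lambda$ (it does, since each $\bm v_\balpha\in\bm V_h$ and the parametric modes are exactly those in $\Lambda$), and that the constant $\gamma_h$ transfers without degradation, i.e. that no loss occurs in passing from the finitely many deterministic inequalities to the single Bochner-space inequality. This is where the product-measure structure of $\pi$ and the orthonormality of $\{\psi_\balpha\}$ are essential: they decouple the bilinear forms across parametric modes, so the norm $|||\cdot|||$ on $\bm V_{h,\Lambda}\times W_{h,\Lambda}\times W_{h,\Lambda}$ splits as a sum over $\balpha\in\Lambda$ of deterministic contributions, and the estimate is obtained modewise with a uniform constant. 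Everything else is the algebra already carried out in \cite[Lemma~2.3]{KPSpre}, now with $C_D\rightsquigarrow\gamma_h$, so I would state the tensorised discrete inf--sup condition as an intermediate claim, prove it in two lines, and then refer to \cite{KPSpre} for the remaining bookkeeping.
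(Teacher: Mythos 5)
Your proposal is correct and follows essentially the same route as the paper, whose proof simply states that the argument proceeds along the same lines as Lemma~\ref{welpose11} (i.e.\ \cite[Lemma~2.3]{KPSpre}) with the continuous inf--sup constant $C_D$ replaced by the discrete constant $\gamma_h$. Your explicit tensorisation step — building modewise supremizers $\bm v_\balpha\in\bm V_h$ for the spatial coefficients of $q\in W_{h,\Lambda}$ and using orthonormality of the $\psi_\balpha$ to transfer \eqref{discrete_inf_sup} to $\bm V_{h,\Lambda}\times W_{h,\Lambda}$ without loss in $\gamma_h$ — is precisely the (correctly executed) step the paper leaves implicit in asserting that the continuous argument carries over.
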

%  \smallskip

\begin{proof}
The proof follows the same lines as that of Lemma \ref{welpose11}, which is given in \cite[Lemma 2.3]{KPSpre}. Note that the constants $C_1^\ast$ and $C_2^\ast$ depend on the \emph{discrete} inf-sup constant $\gamma_{h}$ which is determined by the choice of finite element spaces $\bm{V}_{h}$ and $W_{h}$.
\end{proof}

\smallskip

%\begin{remark}
%One could, in principle, approximate $p$ and $\tilde p$ using different finite element spaces, but using the same polynomial space $S_{\Lambda}$. In this case, an additional inf-sup condition is needed to ensure a stable approximation overall.
%\end{remark}

%\smallskip 

%We can improve the approximate solution pair $(\bm{u}_{h,\Lambda},p_{h,\Lambda},\tilde{p}_{h,\Lambda})$ by enriching the subspaces $\bm{V}_{h,\Lambda}$ and $\bm{W}_{h,\Lambda}$.  In particular, we can enrich the mixed finite-dimensional subspaces $\bm{V}_h\times W_h\times W_h$ and/or the polynomial space $S_{\Lambda}\subset L^2_{\pi}(\Gamma)$.

%\begin{subequations}\label{scbic11}
%\begin{align}
%\Big|\int_D \nabla\bm{v}:\nabla\widetilde{\bm{v}} \Big|&\le \gamma_1 \Big|\int_D \nabla\bm{v}:\nabla{\bm{v}}\Big| \Big|\int_D \nabla\widetilde{\bm{v}}:\nabla\widetilde{\bm{v}} \Big|\quad \forall \bm{v}\in\bm{V}_{h},\, \forall \widetilde{\bm{v}}\in \widetilde{\bm{V}}_{h},\\
%\Big|\int_D q\; \widetilde{q} \Big|&\le \gamma_2 \Big|\int_D q\,q\Big| \Big|\int_D \widetilde{q}\,\widetilde{q} \Big|\quad \forall q\in W_{h},\,\forall \widetilde{q}\in \widetilde{W}_{h}.
%\end{align}
%\end{subequations}

In the next section, we will outline and analyse a strategy for estimating the SG-MFEM errors $\bm{e}^{\bm{u}}:=\bm{u}-\bm{u}_{h,\Lambda} $, $ e^p:=p-p_{h,\Lambda} $ and $e^{\tilde{p}}:=\tilde{p}-\tilde{p}_{h,\Lambda}$. In order to do this, we will need to introduce spaces that are richer than the chosen $\bm{V}_{h,\Lambda}$ and $W_{h,\Lambda}$. First, let $\bm{V}_h^\ast$ and  $W_h^\ast$ be an inf-sup stable pair (in the sense of \eqref{discrete_inf_sup}) of conforming mixed finite element spaces with $\bm{V}_h\subset \bm{V}_h^\ast$ and $W_{h}\subset W_h^\ast$ such that
\begin{align}
\bm{V}_{h}^\ast= \bm{V}_h\oplus \widetilde{\bm{V}}_h\quad \mbox{and}\quad W_h^\ast=W_h\oplus \widetilde{W}_{h},
\end{align}
where $\widetilde{\bm{V}}_h\subset \bm{H}^{1}_{E_0}(D)$, $\widetilde{W}_h\subset L^2(D)$, and
\begin{align}\label{disjoint}
\bm{V}_h\cap\widetilde{\bm{V}}_h=\{\bm{0}\}, \qquad W_h\cap\widetilde{W}_h=\{{0}\}.
\end{align}  
We will call $\widetilde{\bm{V}}_h$ and $\widetilde{W}_h$ the finite element `detail spaces'. For example, these could be constructed from basis functions that would be introduced by performing a uniform refinement of the mesh associated with $\bm{V}_{h}$ and $W_{h}$. Since $\bm{H}^{1}_{E_0}(D)$ and $L^2(D)$ are Hilbert spaces and \eqref{disjoint} holds, then, by the strengthened Cauchy-Schwarz inequality \cite{MJ, MR1124360,CBS}, there exist constants $\gamma_1, \gamma_2\in [0,1)$ (known as CBS constants) such that 
\begin{subequations} \label{scbic11}
   \begin{align}
      \Big|\int_D \nabla\bm{v}:\nabla\widetilde{\bm{v}} \Big| & \le
      \gamma_1 \, \|\nabla\bm{v}\|_{\bm{L}^{2}(D)}  \|\nabla \widetilde{\bm{v}}\|_{\bm{L}^{2}(D)} \quad  \, 
      \forall \bm{v}\in\bm{V}_{h},\, \forall \widetilde{\bm{v}}\in \widetilde{\bm{V}}_{h},
      \label{scbic11a}
   \\
      \Big|\int_D q\; \widetilde{q} \Big| & \le
      \gamma_2 \, \|q\|_{L^{2}(D)} \, \|\tilde{q}\|_{L^{2}(D)}  \quad \quad  \quad 
      \forall q\in W_{h},\,\forall \widetilde{q}\in \widetilde{W}_{h}.
\label{scbic11b}
\end{align}
\end{subequations}
Next, suppose we choose a new index set  $\mathfrak{Q} \subset \mathfrak{I}$ such that $\Lambda\cap \mathfrak{Q}=\emptyset$ and define ${\Lambda}^\ast:=\Lambda\cup\mathfrak{Q}$. We will call $\mathfrak{Q}$ the `detail' index set. On the parameter domain $\Gamma$, we can then define an enriched polynomial space
 $S_{\Lambda}^{\ast}: = \textrm{span} \left \{ \psi_{\boldsymbol{\alpha}}; \;\; \boldsymbol{\alpha}  \in \Lambda^{\ast} \right\} \subset L_{\pi}^{2}(\Gamma)$.
 Moreover, we have the decomposition
 \begin{align}
S_{{\Lambda}}^\ast= S_{{\Lambda}} \oplus S_{{\mathfrak{Q}}},\quad S_{{\Lambda}} \cap S_{\mathfrak{Q}}=\{0\}.
\end{align}
We can now construct enriched finite-dimensional subspaces of $\bm{\mathcal{V}}$ and $\mathcal{W}$ using the finite element detail spaces $\widetilde{\bm{V}}_h$, 
 $\widetilde{W}_{h}$ and the polynomial space $S_{\mathfrak{Q}}$,
\begin{align}
         \bm{{V}}^{\ast}_{h,\Lambda} \,{:=}\,
         \bm{{V}}_{h^{\ast},\Lambda} \,{\oplus}\, \bm{{V}}_{h,\mathfrak{Q}} \,{=}\,
         \Big(\bm{{V}}_{h,\Lambda} \,{\oplus}\, \widetilde{\bm{{V}}}_{h,\Lambda} \Big) \,{\oplus}\, \bm{{V}}_{h,\mathfrak{Q}} =
         \bm{{V}}_{h,\Lambda} \,{\oplus}\, \Big(\widetilde{\bm{{V}}}_{h,\Lambda} \,{\oplus}\, \bm{{V}}_{h,\mathfrak{Q}} \Big),
         \label{enriched_spacesV}
         \\
         {{W}}^{\ast}_{h,\Lambda} \,{:=}\,
         {{W}}_{h^{\ast},\Lambda} \,{\oplus}\, {{W}}_{h,\mathfrak{Q}} \,{=}\,
         \Big(W_{h,\Lambda} \,{\oplus}\, \widetilde{{W}}_{h,\Lambda}\Big) \,{\oplus}\, {{W}}_{h,\mathfrak{Q}} =
         {{W}}_{h,\Lambda} \,{\oplus}\, \Big( \widetilde{{W}}_{h,\Lambda} \,{\oplus}\, {{W}}_{h,\mathfrak{Q}} \Big)
         \label{enriched_spacesW},
\end{align}
where $\bm{V}_{h^\ast,\Lambda}:=\bm{V}_h^\ast \otimes S_\Lambda$, $W_{h^\ast,\Lambda}:=W_h^\ast \otimes S_\Lambda$, $\widetilde{\bm{V}}_{h,\Lambda}:=\widetilde{\bm{V}}_h \otimes S_\Lambda$, $\widetilde{W}_{h,\Lambda}:=\widetilde{W}_{h}\otimes S_\Lambda$, $\bm{V}_{h,\mathfrak{Q}}=\bm{V}_{h} \otimes S_{\mathfrak{Q}}$ and $W_{h,\mathfrak{Q}}=W_{h} \otimes S_{\mathfrak{Q}}.$

%Note that the terms in brackets represent the sets of functions that are being added
%to the current SG-MFEM approximation spaces $\bm{V}_{h, \Lambda}$ and $W_{h, \Lambda}$. 

% CP : This isn't needed here. We never refer to is.
%
%The finite-dimensional  {three-field problem} \eqref{scm11a} is thus: find
% $({\bm{u}_{h,\Lambda}^{\ast}}, {p_{h,\Lambda}^{\ast}}, {\tilde{p}_{h, \Lambda}^\ast})
%  \in \bm{V}_{h, \Lambda}^\ast \times W_{h,\Lambda}^\ast \times W_{h,\Lambda}^\ast$ such that
% \begin{subequations} \label{disver11enrich}
%\begin{align}
%a({\bm{u}_{h,\Lambda}^\ast},\bm{v})+b(\bm{v},{p_{h,\Lambda}^\ast})&=f(\bm{v}) 
%\quad \, \, \forall \bm{v}\in \bm{V}_{h,\Lambda}^\ast,\\
%b({\bm{u}_{h,\Lambda}^\ast},q)-c({\tilde{p}_{h, \Lambda}^\ast},q)&=0 
%\quad \qquad \forall q\in  W_{h,\Lambda}^\ast,\\
%-c({p_{h,\Lambda}^\ast},\tilde{q})+d({\tilde{p}_{h, \Lambda}^\ast},\tilde{q})&=0 
%\quad \qquad \forall \tilde{q}\in  W_{h,\Lambda}^\ast.
%\end{align}
%\end{subequations}

%%%%%%%%%%%%%%%%%%%%%%%%%%%%%%%%%%%%%%%%%%%%%%%%%%%%%%%%%%%%%%%%%%%%%%%%%%%%%%%%%%%%%%%%%%
\section{A posteriori error estimation}\label{sec4}
We now want to estimate the SG-MFEM errors $\bm{e}^{\bm{u}}=\bm{u}-\bm{u}_{h,\Lambda}$,
$e^p=p-p_{h,\Lambda}$ and $e^{\tilde{p}}=\tilde{p}-\tilde{p}_{h,\Lambda}$. To that end, we adapt the \emph{residual approach} described, e.g., in~\cite[Section~3]{prudhommeOden1999} to our three-field formulation. Substituting  $\bm{u}$, ${p}$ and $\tilde{p}$ in \eqref{scm11a} by
$\bm{u}_{h,\Lambda}+e^{\bm{u}}$, $p_{h,\Lambda}+e^p$ and $\tilde{p}_{h,\Lambda}+e^{\tilde{p}}$, respectively,
we conclude that $(e^{\bm{u}}, e^p, e^{\tilde{p}}) \in \bm{\mathcal{V}} \times \mathcal{W} \times \mathcal{W}$ satisfies
\begin{subequations}\label{scm11aer}
\begin{align}
a(e^{\bm{u}},\bm{v})+b(\bm{v},e^p)&=\mathcal{R}^{\bm{u}}(\bm{v})\quad\forall \bm{v}\in \bm{\mathcal{V}},\\
b(e^{\bm{u}},q)-c(e^{\tilde{p}},q)&=\mathcal{R}^{p}(q)\quad \, \, \forall q\in \mathcal{W},\\
-c(e^p,\tilde{q})+d(e^{\tilde{p}},\tilde{q})&=\mathcal{R}^{\tilde{p}}(\tilde{q})\quad \, \, \forall \tilde{q}\in \mathcal{W},
\end{align}
\end{subequations}
where the linear functionals $\mathcal{R}^{\bm{u}}:\bm{\mathcal{V}}\rightarrow \mathbb{R}$ and 
$\mathcal{R}^{p},\mathcal{R}^{\tilde{p}}:\mathcal{W}\rightarrow \mathbb{R}$ are defined as
\begin{subequations}\label{resid}
   \begin{align}
       \mathcal{R}^{\bm{u}}(\bm{v})
       &:=f(\bm{v})-a(\bm{u}_{h,\Lambda},\bm{v})-b(\bm{v},p_{h,\Lambda})\quad \, \forall \bm{v}\in \bm{\mathcal{V}},
       \label{resid1}
   \\
       \mathcal{R}^{p}(q)
       &:=-b(\bm{u}_{h,\Lambda},q)+c(\tilde{p}_{h,\Lambda},q)\quad \quad \quad \quad \forall q\in \mathcal{W},
       \label{resid2}
   \\
       \mathcal{R}^{\tilde{p}}(\tilde{q})
       &:=c(p_{h,\Lambda},\tilde{q})-d(\tilde{p}_{h,\Lambda},\tilde{q}) \qquad \qquad \quad \forall \tilde{q}\in \mathcal{W}.
       \label{resid3}
   \end{align}
\end{subequations}
Clearly, these functionals represent the residuals  associated with the current SG-MFEM approximation.
For each one, we define a weighted dual norm as follows 
\begin{align}
||\mathcal{R}^{\bm{u}}||_\ast &:= \sup_{\bm{v}\in\bm{\mathcal{V}}\setminus\{0\}}\frac{|\mathcal{R}^{\bm{u}}(\bm{v})|}{\alpha^{1/2}||\nabla\bm{v}||_{\bm{\mathcal{W}}}},\\
 ||\mathcal{R}^{p}||_\ast &:= \sup_{q\in{\mathcal{W}}\setminus\{0\}}\frac{|\mathcal{R}^{p}(q)|}{(\alpha^{-1}+(\alpha\beta)^{-1})^{1/2}||q||_{{\mathcal{W}}}}, \\
  ||\mathcal{R}^{\tilde{p}}||_\ast &:= \sup_{\tilde{q}\in{\mathcal{W}}\setminus\{0\}}\frac{|\mathcal{R}^{\tilde{p}}(\tilde{q})|}{(\alpha\beta)^{-1/2}||\tilde{q}||_{{\mathcal{W}}}}.
\end{align}
The next result establishes an equivalence between the norm of the SG-MFEM approximation error and the sum of the dual norms of the three residuals. 

\begin{theorem}\label{mainth1}
Let $(e^{\bm{u}},e^{p},e^{\tilde{p}}) \in \bm{\mathcal V} \times {\mathcal{W}} \times {\mathcal W}$ be the error in the SG-MFEM approximation $(\bm{u}_{h,\Lambda}, p_{h,\Lambda}, \tilde{p}_{h,\Lambda}) \in \bm{V}_{h,\Lambda} \times {W}_{h,\Lambda} \times {W}_{h,\Lambda}$ of the solution to (\ref{scm12}). Then
\begin{align*} %\label{mainesterres}
C_6\, (||\mathcal{R}^{\bm{u}}||_\ast+||\mathcal{R}^{p}||_\ast+||\mathcal{R}^{\tilde{p}}||_\ast)\le |||(e^{\bm{u}},e^{p},e^{\tilde{p}})|||\le C_7 \, (||\mathcal{R}^{\bm{u}}||_\ast+||\mathcal{R}^{p}||_\ast+||\mathcal{R}^{\tilde{p}}||_\ast),%\nonumber
\end{align*}
where
% $C_6:=1/(\sqrt{3}(E_{\max}+\sqrt{d}))$,
$C_6:=\big( \sqrt{3} \max\big\{E_{\max} + \sqrt{d},\, 1+\sqrt{d}\big\} \big)^{-1}$, $C_7:=C_2/(C_1E_{\min})$ and $C_1$ and $C_2$ are the constants defined in Lemma \ref{welpose11}.
\end{theorem}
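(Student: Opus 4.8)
The plan is to establish the two inequalities separately, each by exploiting the error system \eqref{scm11aer} together with the stability results already available.

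\medskip

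\noindent\textbf{Upper bound (reliability).} First I would apply the continuous stability result, Lemma \ref{welpose11}, to the error triple $(e^{\bm{u}},e^{p},e^{\tilde{p}})\in\bm{\mathcal{V}}\times\mathcal{W}\times\mathcal{W}$. This furnishes a test triple $(\bm{v},q,\tilde{q})$ with $|||(\bm{v},q,\tilde{q})|||\le C_2\,|||(e^{\bm{u}},e^{p},e^{\tilde{p}})|||$ and
\begin{align*}
E_{\min}C_1\,|||(e^{\bm{u}},e^{p},e^{\tilde{p}})|||^2 \le \mathcal{B}(e^{\bm{u}},e^{p},e^{\tilde{p}};\bm{v},q,\tilde{q}).
\end{align*}
Now, by linearity and \eqref{scm11aer}, $\mathcal{B}(e^{\bm{u}},e^{p},e^{\tilde{p}};\bm{v},q,\tilde{q})=\mathcal{R}^{\bm{u}}(\bm{v})+\mathcal{R}^{p}(q)+\mathcal{R}^{\tilde{p}}(\tilde{q})$. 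Each term is bounded using the definition of the corresponding weighted dual norm; e.g.\ $|\mathcal{R}^{\bm{u}}(\bm{v})|\le \|\mathcal{R}^{\bm{u}}\|_\ast\,\alpha^{1/2}\|\nabla\bm{v}\|_{\bm{\mathcal{W}}}$, and similarly for the other two, with the weights $(\alpha^{-1}+(\alpha\beta)^{-1})^{1/2}\|q\|_{\mathcal{W}}$ and $(\alpha\beta)^{-1/2}\|\tilde{q}\|_{\mathcal{W}}$. Observe that each of these three weighted quantities is bounded by $|||(\bm{v},q,\tilde{q})|||$, since the squared norm \eqref{normden} is exactly the sum of their squares; hence the sum of the three is at most $\sqrt{3}\,|||(\bm{v},q,\tilde{q})|||$. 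Combining, $E_{\min}C_1\,|||(e^{\bm{u}},e^{p},e^{\tilde{p}})|||^2 \le (\|\mathcal{R}^{\bm{u}}\|_\ast+\|\mathcal{R}^{p}\|_\ast+\|\mathcal{R}^{\tilde{p}}\|_\ast)\,|||(\bm{v},q,\tilde{q})|||\le C_2(\|\mathcal{R}^{\bm{u}}\|_\ast+\|\mathcal{R}^{p}\|_\ast+\|\mathcal{R}^{\tilde{p}}\|_\ast)\,|||(e^{\bm{u}},e^{p},e^{\tilde{p}})|||$, and dividing through by $|||(e^{\bm{u}},e^{p},e^{\tilde{p}})|||$ gives the upper bound with $C_7=C_2/(C_1 E_{\min})$. (The $\sqrt{3}$ is absorbed into $C_2$ or kept explicit depending on how one wishes to state it; the stated $C_7$ suggests the authors fold it in or bound it crudely.)

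\medskip

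\noindent\textbf{Lower bound (efficiency).} For the other direction I would bound each residual dual norm directly in terms of $|||(e^{\bm{u}},e^{p},e^{\tilde{p}})|||$ by using \eqref{scm11aer} to rewrite each residual as an application of bilinear forms to the error, and then invoking the continuity estimates of Lemma \ref{tecre11}. For instance, for arbitrary $\bm{v}\in\bm{\mathcal{V}}$, $\mathcal{R}^{\bm{u}}(\bm{v})=a(e^{\bm{u}},\bm{v})+b(\bm{v},e^{p})\le \alpha E_{\max}\|\nabla e^{\bm{u}}\|_{\bm{\mathcal{W}}}\|\nabla\bm{v}\|_{\bm{\mathcal{W}}}+\sqrt{d}\,\|\nabla\bm{v}\|_{\bm{\mathcal{W}}}\|e^{p}\|_{\mathcal{W}}$; dividing by $\alpha^{1/2}\|\nabla\bm{v}\|_{\bm{\mathcal{W}}}$ and taking the supremum yields $\|\mathcal{R}^{\bm{u}}\|_\ast\le \alpha^{1/2}E_{\max}\|\nabla e^{\bm{u}}\|_{\bm{\mathcal{W}}}+\alpha^{-1/2}\sqrt{d}\,\|e^{p}\|_{\mathcal{W}}$, both terms being controlled by $|||(e^{\bm{u}},e^{p},e^{\tilde{p}})|||$ up to the constant $\max\{E_{\max},\sqrt{d}\}$ (using $\alpha^{-1}\le \alpha^{-1}+(\alpha\beta)^{-1}$ for the pressure term). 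The same procedure applied to $\mathcal{R}^{p}(q)=b(e^{\bm{u}},q)-c(e^{\tilde{p}},q)$ and $\mathcal{R}^{\tilde{p}}(\tilde{q})=-c(e^{p},\tilde{q})+d(e^{\tilde{p}},\tilde{q})$, using $c(p,q)\le(\alpha\beta)^{-1}\|p\|_{\mathcal{W}}\|q\|_{\mathcal{W}}$, $d(p,q)\le(\alpha\beta)^{-1}E_{\max}\|p\|_{\mathcal{W}}\|q\|_{\mathcal{W}}$, and matching the weights in the dual-norm denominators against the weights in \eqref{normden}, gives $\|\mathcal{R}^{p}\|_\ast$ and $\|\mathcal{R}^{\tilde{p}}\|_\ast$ each bounded by a constant of the form $\max\{1+\sqrt{d},E_{\max}+\sqrt{d}\}$ times $|||(e^{\bm{u}},e^{p},e^{\tilde{p}})|||$. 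Summing the three estimates and collecting the $\sqrt{3}$ (from, say, $\sum a_i\le\sqrt{3}(\sum a_i^2)^{1/2}$ applied to the error components, or simply from there being three residuals) produces the lower bound with $C_6=\big(\sqrt{3}\max\{E_{\max}+\sqrt{d},1+\sqrt{d}\}\big)^{-1}$.

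\medskip

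\noindent\textbf{Main obstacle.} The routine analytic content is easy; the delicate part is pure bookkeeping of the $\nu$-dependent weights $\alpha$ and $\beta$. One must verify that every time a term $\alpha^{\pm1/2}$, $(\alpha\beta)^{\pm1/2}$, or $(\alpha^{-1}+(\alpha\beta)^{-1})^{\pm1/2}$ arises from a continuity/dual-norm pairing, it cancels cleanly (or is bounded by unity via $\alpha^{-1}\le\alpha^{-1}+(\alpha\beta)^{-1}$ and $\beta>0$), so that no factor depending on $\nu$ survives into $C_6$ and $C_7$. In particular the cross term $b(\bm{v},e^{p})$ in $\mathcal{R}^{\bm{u}}$ produces a factor $\alpha^{-1/2}\sqrt d$ which must be matched against the pressure weight in the triple norm — this is where the choice of weights in \eqref{normden} and in the dual norms is precisely engineered to work, and confirming that alignment is the one place where care is genuinely needed. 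Everything else is Cauchy--Schwarz and the two stability lemmas already proved.
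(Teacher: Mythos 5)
Your proposal is correct and follows essentially the same route as the paper: the upper bound via Lemma \ref{welpose11} together with the residual equations \eqref{scm11aer} and the dual-norm definitions, and the lower bound by writing each residual in terms of the error, invoking the continuity estimates of Lemma \ref{tecre11}, and matching the $\alpha$, $\beta$ weights against \eqref{normden}. The only imprecision is the $\sqrt{3}$ bookkeeping: no $\sqrt{3}$ enters the upper bound at all (your own combining line already yields $C_7=C_2/(C_1E_{\min})$ exactly), and in the lower bound it must come from Cauchy--Schwarz across the three weighted error components (after checking each component's coefficient is at most $\max\{E_{\max}+\sqrt{d},\,1+\sqrt{d}\}$, which uses $\sqrt{d}\ge 1$), not ``simply from there being three residuals'', which would only give a factor $3$ and hence a weaker constant than the stated $C_6$.
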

\smallskip
\begin{proof}
Since $(e^{\bm{u}},e^{p},e^{\tilde{p}})\in \bm{\mathcal{V}}\times\mathcal{W}\times\mathcal{W}$, then from Lemma \ref{welpose11} there exists a $(\bm{v},q,\tilde{q})\in \bm{\mathcal{V}}\times\mathcal{W}\times\mathcal{W}$ with $|||(\bm{v},q,\tilde{q})|||\le C_2 \, |||(e^{\bm{u}},e^{p},e^{\tilde{p}})|||$ such that
\begin{align}
C_1 \, E_{\min} \, |||(e^{\bm{u}},e^{p},e^{\tilde{p}})|||^2\le \mathcal{B}(e^{\bm{u}},e^{p},e^{\tilde{p}}; \bm{v},q,\tilde{q}),
\end{align}
where $C_1$ and $C_2$ depend on $E_{\max}$, $C_K$ and $C_D$.
Hence, we have by (\ref{scm11aer})
\begin{align}
C_1 \, E_{\min} \, |||(e^{\bm{u}},e^{p},e^{\tilde{p}})|||^2\le \mathcal{R}^{\bm{u}}(\bm{v})+\mathcal{R}^{p}(q)+\mathcal{R}^{\tilde{p}}(\tilde{q}).
\end{align}
Moreover, using the definitions of the dual norms, we have
\begin{align}
C_1 E_{\min} |||(e^{\bm{u}},e^{p},e^{\tilde{p}})|||^2\le (||\mathcal{R}_h^{\bm{u}}||_{\ast}+||\mathcal{R}_h^{p}||_{\ast}+||\mathcal{R}_h^{\tilde{p}}||_{\ast})C_2|||(e^{\bm{u}},e^{p},e^{\tilde{p}})|||.
\end{align}
This establishes the upper bound. To establish the lower bound, we use the definition
\begin{align}
||\mathcal{R}^{\bm{u}}||_\ast &:= \sup_{\bm{v}\in\bm{\mathcal{V}}\setminus\{0\}}\frac{|\mathcal{R}^{\bm{u}}(\bm{v})|}{\alpha^{1/2}||\nabla\bm{v}||_{\bm{\mathcal{W}}}}=\sup_{\bm{v}\in\bm{\mathcal{V}}\setminus\{0\}}\frac{|a(e_{\bm{u}},\bm{v})+b(\bm{v},e_p)|}{\alpha^{1/2}||\nabla\bm{v}||_{\bm{\mathcal{W}}}},
\end{align} 
and apply Lemma \ref{tecre11} to give
\begin{align}\label{errre11}
||\mathcal{R}^{\bm{u}}||_\ast \le E_{\max} \alpha^{1/2}||\nabla e^{\bm{u}}||_{\bm{\mathcal{W}}}+\sqrt{d}\alpha^{-1/2}||e^p||_{\mathcal{W}}.
\end{align}
Similarly, we have
\begin{align}\label{errre12}
||\mathcal{R}^{p}||_{\ast}&\le \sqrt{d} \alpha^{1/2}||\nabla e^{\bm{u}}||_{\bm{\mathcal{W}}}+(\alpha\beta)^{-1/2}||e^{\tilde{p}}||_{\mathcal{W}},\\
||\mathcal{R}^{\tilde{p}}||_{\ast}&\le\ (\alpha\beta)^{-1/2}||e^p||_{\mathcal{W}}+E_{\max}(\alpha\beta)^{-1/2}||e^{\tilde{p}}||_{\mathcal{W}}.\label{errre13}
\end{align}
Combining (\ref{errre11}), (\ref{errre12}) and (\ref{errre13}), and recalling the definition of the norm $|||(\cdot,\cdot,\cdot)|||$ in (\ref{normden}) implies the stated result. 
\end{proof}

Theorem \ref{mainth1} is our starting point for developing an a posteriori error estimation strategy. We will estimate $|||(e^{\bm{u}},e^{p},e^{\tilde{p}})|||$ by estimating $||\mathcal{R}^{\bm{u}}||_\ast+||\mathcal{R}^{p}||_\ast+||\mathcal{R}^{\tilde{p}}||_\ast $.

%%%%%%%%%%%%%%%%%%%%%%%%%%%%%%%%%%%%%%%%%%%%%%%%%%%%%%%%%%%%%%%%%%%%%%%%%%%%%%%%%%%%%%%%%%
%\subsection{Exact evaluation of the residuals}
\subsection{Evaluation of the residuals}
First, we give an alternative representation of each of $||\mathcal{R}^{\bm{u}}||_\ast$, $||\mathcal{R}^{p}||_\ast$ and $||\mathcal{R}^{\tilde{p}}||_\ast$ and show that  $||\mathcal{R}^{p}||_\ast$ can be evaluated exactly. Since $\bm{\mathcal{V}}$ and $\mathcal{W}$ are Hilbert spaces, the Riesz representation theorem tells us that we can find a unique ${\bm{e}}_{0}^{\bm{u}}\in \bm{\mathcal{V}}$, a unique $e_{0}^{p}\in\mathcal{W}$ and a unique $e_{0}^{\tilde{p}}\in\mathcal{W}$ such that 
\begin{subequations}\label{exev1}
\begin{align}\label{res11}
\bar{a}_0(\bm{e}_{0}^{\bm{u}},\bm{v})&=\mathcal{R}^{\bm{u}}(\bm{v}) \qquad  \forall \bm{v}\in \bm{\mathcal{V}},\\
\bar{c}({e}_{0}^{p},q)&=\mathcal{R}^p(q) \,  \qquad \forall q\in {\mathcal{W}},\label{res12}\\
\bar{d}_0({e}_{0}^{\tilde{p}},\tilde{q})&=\mathcal{R}^{\tilde{p}}(\tilde{q}) \, \qquad \forall \tilde{q}\in {\mathcal{W}},\label{res13}
\end{align} 
\end{subequations}
where we define the weighted inner products
\begin{align}
\bar{a}_0(\bm{u},\bm{v})&:= \alpha \int_{\Gamma}\int_{D} \nabla \bm{u}(\bm{x},\bm{y}):\nabla\bm{v}(\bm{x},\bm{y}) \, d\bm{x} \, d\pi(\bm{y}),\\
\bar{c}(p,q)&:=\Big(\alpha^{-1}+(\alpha\beta)^{-1}\Big)\int_{\Gamma} \int_{D} {p}(\bm{x},\bm{y}){q}(\bm{x},\bm{y}) \, d\bm{x} \, d\pi(\bm{y}),\\
\bar{d}_0(p,q)&:=({\alpha}\beta)^{-1}\int_{\Gamma}\int_{D} {p}(\bm{x},\bm{y}){q}(\bm{x},\bm{y}) \, d\bm{x} \, d\pi(\bm{y}).
\end{align}
{Note that $\bar{d}_0(\cdot, \cdot)$ is identical to $c(\cdot, \cdot)$ in~\eqref{fbf_c}.
 We use the notation $\bar{d}_0(\cdot, \cdot)$ to emphasise that it is an approximation to the bilinear form $d(\cdot, \cdot)$ defined in \eqref{fbf_d}. Similarly,  $\bar{a}_0(\cdot, \cdot)$ approximates the bilinear form $a(\cdot, \cdot)$ in \eqref{fbf_a}. We observe that 
\begin{align}
|\bm{e}_{0}^{\bm{u}}|_{\bar{a}_0}&:=\sqrt{\bar{a}_0(\bm{e}_{0}^{\bm{u}},\bm{e}_{0}^{\bm{u}})}=||\mathcal{R}^{\bm{u}}||_{\ast}, \label{duales1}\\
|{e}_{0}^{p}|_{\bar{c}}&:=\sqrt{\bar{c}({e}_{0}^{p},{e}_{0}^{p})}=||\mathcal{R}^p||_\ast,  \label{duales2}\\
|{e}_{0}^{\tilde{p}}|_{\bar{d}_0}&:=\sqrt{\bar{d}_0({e}_{0}^{\tilde{p}},{e}_{0}^{\tilde{p}})}=||\mathcal{R}^{\tilde{p}}||_\ast. \label{duales3}
\end{align} 
Now, from (\ref{res12}) and the definition of $\mathcal{R}^p$ in~\eqref{resid2},  $\bar{c}(e_{0}^{p},q)=-b(\bm{u}_{h,\Lambda},q) + c(\tilde{p}_{h,\Lambda},q)$
for all $q \in {\mathcal{W}}$, from which we conclude that
\begin{align}\label{e_0_p_def}
{e}_{0}^p= (\alpha^{-1}+(\alpha\beta)^{-1})^{-1}(\nabla \cdot \bm{u}_{h,\Lambda}+(\alpha\beta)^{-1} \tilde{p}_{h,\Lambda}) \quad \mbox{a.e. in}\; D\times \Gamma. 
\end{align}  
Hence,  unlike  $||\mathcal{R}^{\bm{u}}||_{\ast}$ and $||\mathcal{R}^{\tilde{p}}||_\ast$, we can compute $||\mathcal{R}^p||_\ast = |{e}_{0}^{p}|_{\bar{c}}$ directly. This is due to the fact that $\mathcal{R}^p$ only involves the bilinear forms $b(\cdot,\cdot)$ and $c(\cdot,\cdot)$, neither of which include the parametric Young's modulus $E$.

%%%%%%%%%%%%%%%%%%%%%%%%%%%%%%%%%%%%%%%%%%%%%%%%%%%%%%%%%%%%%%%%%%%%%%%%%%%%%%%%%%%%%%%%%%
\subsection{Approximation of the residuals}
We can approximate the solutions $\bm{e}_{0}^{\bm{u}}$ and ${e}_{0}^{\tilde{p}}$ to \eqref{res11} and \eqref{res13} by replacing the infinite-dimensional spaces $\bm{\mathcal V}$ and ${\mathcal W}$  with finite-dimensional ones. However, since $\mathcal{R}^{\bm{u}}(\bm{v})=0$ for all ${\bm{v}} \in \bm{V}_{h,\Lambda}$ and $\mathcal{R}^{\tilde{p}}(\tilde{q}) =0$ for all $\tilde{q} \in {W}_{h,\Lambda}$, we must work with richer spaces than $\bm{V}_{h,\Lambda}$ and ${W}_{h,\Lambda}$. 

Recall now the definitions of the spaces ${\bm{V}}_{h,\Lambda}^{\ast}\subset \bm{\mathcal{V}}$ and ${W}_{h,\Lambda}^{\ast} \subset {\mathcal{W}}$  in \eqref{enriched_spacesV}  and \eqref{enriched_spacesW} and consider the problems: find $\bm{e}_{0}^{\bm{u},\ast}\in{\bm{V}}_{h,\Lambda}^\ast$ and ${e}_{0}^{\tilde{p},\ast}\in{W}_{h,\Lambda}^\ast$ such that
\begin{subequations}\label{disev1}
\begin{align}\label{disres11}
\bar{a}_0(\bm{e}_{0}^{\bm{u},\ast},\bm{v})&=\mathcal{R}^{\bm{u}}(\bm{v}) \qquad  \forall \bm{v}\in {\bm{V}}_{h,\Lambda}^\ast,\\
\bar{d}_0({e}_{0}^{\tilde{p},\ast},\tilde{q})&=\mathcal{R}^{\tilde{p}}(\tilde{q}) \qquad \forall \tilde{q}\in {{W}}_{h,\Lambda}^\ast.\label{disres13}
\end{align} 
\end{subequations}
From  (\ref{res11}) and (\ref{disres11}), we have
\[
   \bar{a}_0(\bm{e}_{0}^{\bm{u}}-\bm{e}_{0}^{\bm{u},\ast},\bm{v}) = 0\quad \forall \bm{v}\in \bm{V}_{h,\Lambda}^\ast,
\]
and similarly, 
\[
   \bar{d}_0({e}_{0}^{\tilde{p}}-{e}_{0}^{\tilde{p},\ast},\tilde{q}) = 0 \qquad \forall \tilde{q}\in {{W}}_{h,\Lambda}^\ast.
\]
\newpage
Hence, $\bm{e}_{0}^{\bm{u},\ast}$ is simply the orthogonal projection of $\bm{e}_{0}^{\bm{u}}$
onto the space ${\bm{V}}_{h,\Lambda}^\ast$ with respect to the inner product $\bar{a}_0({\cdot,\cdot})$, and
${e}_{0}^{\tilde{p},\ast}$ is the orthogonal projection of ${e}_{0}^{\tilde{p}}$
onto the space $W_{h,\Lambda}$ with respect to $\bar{d}_0(\cdot,\cdot)$.
As a consequence of this, we have
\begin{align}\label{oreq1}
|\bm{e}_{0}^{\bm{u},\ast}|^2_{\bar{a}_0}+ |\bm{e}_{0}^{\bm{u}}-\bm{e}_{0}^{\bm{u},\ast}|^2_{\bar{a}_0}= |\bm{e}_{0}^{\bm{u}}|^2_{\bar{a}_0}, \quad 
|{e}_{0}^{\tilde{p},\ast}|^2_{\bar{d}_0}+ |{e}_{0}^{\tilde{p}}-{e}_{0}^{\tilde{p},\ast}|^2_{\bar{d}_0}= |{e}_{0}^{\tilde{p}}|^2_{\bar{d}_0},
\end{align}
which gives
\begin{align}\label{ortest1}
|\bm{e}_{0}^{\bm{u}}-\bm{e}_{0}^{\bm{u},\ast}|^2_{\bar{a}_0}\le |\bm{e}_{0}^{\bm{u}}|^2_{\bar{a}_0},
\quad  |{e}_{0}^{\tilde{p}}-{e}_{0}^{\tilde{p},\ast}|^2_{\bar{d}_0}\le |{e}_{0}^{\tilde{p}}|^2_{\bar{d}_0}.
\end{align}
%We can use this result to help us assess how well $|\bm{e}_{0}^{\bm{u},\ast}|_{\bar{a}_0}$ and $|{e}_{0}^{\tilde{p},\ast}|_{\bar{d}_0}$ estimate $ |\bm{e}_{0}^{\bm{u}}|_{\bar{a}_0}$ and %$| e_{0}^{\tilde{p}}|_{\bar{d}_0}$, respectively.  

%In next result, we show that the estimates in (\ref{ortest1}) can be sharpened.

\begin{lemma}\label{mainth2}
Let $\bm{e}_{0}^{\bm{u}} \in \bm{\mathcal{V}}$ and ${e}_{0}^{\tilde{p}}\in \mathcal{W}$ satisfy  \eqref{res11} and \eqref{res13}, respectively, and let $\bm{e}_{0}^{\bm{u},\ast} \in  \bm{V}_{h,\Lambda}^\ast$ and ${e}_{0}^{\tilde{p},\ast} \in W_{h,\Lambda}^\ast$ {satisfy \eqref{disres11} and \eqref{disres13}, respectively}. Suppose that $\bm{e}_{0}^{\bm{u}}\neq0$ and ${e}_{0}^{\tilde{p}}\neq 0$ and the spaces $\bm{V}_{h,\Lambda}^\ast$ and $W_{h,\Lambda}^\ast$ are chosen so that $\bm{e}_{0}^{\bm{u},\ast}\neq0$ and ${e}_{0}^{\tilde{p},\ast}\neq 0$. Then, there exists a constant $\Theta\in[0,1)$ such that:
\begin{subequations}\label{ortest3}
\begin{align}
| \bm{e}_{0}^{\bm{u},\ast}|_{\bar{a}_0} &\le | \bm{e}_{0}^{\bm{u}}|_{\bar{a}_0}\le \frac{1}{\sqrt{1-\Theta^2}}  |\bm{e}_{0}^{\bm{u},\ast}|_{\bar{a}_0},\\
| e_{0}^{\tilde{p},\ast}|_{\bar{d}_0} &\le | e_{0}^{\tilde{p}}|_{\bar{d}_0}\le \frac{1}{\sqrt{1-\Theta^2}}  |e_{0}^{\tilde{p},\ast}|_{\bar{d}_0}.
\end{align}
\end{subequations}
\end{lemma}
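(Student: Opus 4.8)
The plan is to derive both pairs of inequalities in \eqref{ortest3} entirely from the Pythagoras identities \eqref{oreq1}, which have already been established from the Galerkin orthogonality of $\bm{e}_{0}^{\bm{u},\ast}$ and $e_{0}^{\tilde{p},\ast}$ with respect to the inner products $\bar{a}_0(\cdot,\cdot)$ and $\bar{d}_0(\cdot,\cdot)$. The two left-hand inequalities follow immediately by discarding a nonnegative term in \eqref{oreq1}, so all the work is in the two right-hand inequalities, and the point is that the constant $\Theta$ is to be defined \emph{a posteriori} rather than assumed.

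First I would note that $|\cdot|_{\bar{a}_0}$ is a genuine norm on $\bm{\mathcal V}$ (the $\bar{a}_0$-seminorm is a full norm on $\bm{H}^1_{E_0}(D)\otimes L^2_\pi(\Gamma)$ by the Poincar\'e--Friedrichs inequality, since functions vanish on $\partial D_D$) and $|\cdot|_{\bar{d}_0}$ is a norm on $\mathcal W$ (a positive multiple of the $\mathcal W$-inner product). Hence the hypotheses $\bm{e}_{0}^{\bm{u}}\neq\bm{0}$ and $e_{0}^{\tilde{p}}\neq 0$ give $|\bm{e}_{0}^{\bm{u}}|_{\bar{a}_0}>0$ and $|e_{0}^{\tilde{p}}|_{\bar{d}_0}>0$, so the ratios
\[
\Theta_{\bm{u}}:=\frac{|\bm{e}_{0}^{\bm{u}}-\bm{e}_{0}^{\bm{u},\ast}|_{\bar{a}_0}}{|\bm{e}_{0}^{\bm{u}}|_{\bar{a}_0}},
\qquad
\Theta_{\tilde{p}}:=\frac{|e_{0}^{\tilde{p}}-e_{0}^{\tilde{p},\ast}|_{\bar{d}_0}}{|e_{0}^{\tilde{p}}|_{\bar{d}_0}}
\]
are well defined and, by \eqref{ortest1}, lie in $[0,1]$. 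The next step is to upgrade this to $[0,1)$: rearranging the first identity in \eqref{oreq1} gives $|\bm{e}_{0}^{\bm{u}}-\bm{e}_{0}^{\bm{u},\ast}|_{\bar{a}_0}^2=|\bm{e}_{0}^{\bm{u}}|_{\bar{a}_0}^2-|\bm{e}_{0}^{\bm{u},\ast}|_{\bar{a}_0}^2$, which is \emph{strictly} smaller than $|\bm{e}_{0}^{\bm{u}}|_{\bar{a}_0}^2$ precisely because $\bm{e}_{0}^{\bm{u},\ast}\neq\bm{0}$ forces $|\bm{e}_{0}^{\bm{u},\ast}|_{\bar{a}_0}>0$; the second identity in \eqref{oreq1} together with $e_{0}^{\tilde{p},\ast}\neq 0$ treats $\Theta_{\tilde{p}}$ identically. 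I would then set $\Theta:=\max\{\Theta_{\bm{u}},\Theta_{\tilde{p}}\}\in[0,1)$.

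Finally, rearranging the first identity in \eqref{oreq1} once more gives $|\bm{e}_{0}^{\bm{u},\ast}|_{\bar{a}_0}^2=(1-\Theta_{\bm{u}}^2)\,|\bm{e}_{0}^{\bm{u}}|_{\bar{a}_0}^2\ge(1-\Theta^2)\,|\bm{e}_{0}^{\bm{u}}|_{\bar{a}_0}^2$ since $\Theta\ge\Theta_{\bm{u}}$; dividing by $1-\Theta^2>0$ and taking square roots yields the first right-hand inequality, and the same computation with the second identity in \eqref{oreq1} and $\Theta\ge\Theta_{\tilde{p}}$ gives the second. There is no genuine obstacle here — the argument is just bookkeeping around Pythagoras — and the only point needing a moment's care is that a \emph{single} constant $\Theta$ must serve both estimates, which is why one takes the maximum and uses the (trivial) monotonicity of $t\mapsto(1-t^2)^{-1/2}$ on $[0,1)$ together with $1-\Theta^2>0$. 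It is worth stressing in the write-up that $\Theta$ is not a saturation hypothesis imposed in advance: it is read off from the computable projected residuals, and the content of the lemma is exactly that $\Theta<1$ whenever the enriched discrete problems \eqref{disev1} have nonzero solutions.
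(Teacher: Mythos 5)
Your proof is correct and follows essentially the same route as the paper's: both rest on the Pythagoras identities \eqref{oreq1}, with the left-hand bounds immediate and the right-hand bounds obtained from a saturation constant $\Theta\in[0,1)$ that is strictly less than one because $\bm{e}_{0}^{\bm{u},\ast}\neq\bm{0}$ and $e_{0}^{\tilde{p},\ast}\neq 0$. Your explicit construction of $\Theta$ as the maximum of the two ratios simply spells out what the paper asserts when it says such a $\Theta$ exists, so no further comment is needed.
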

\begin{proof}The left hand-side inequalities in (\ref{ortest3}) follow directly from (\ref{oreq1}).  If $\bm{e}_{0}^{\bm{u},\ast}\neq0$ and ${e}_{0}^{\tilde{p},\ast}\neq 0$ then (\ref{oreq1}) also tells us that there exists a $\Theta\in[0,1)$ such that:
\begin{align}\label{ortest2}
|\bm{e}_{0}^{\bm{u}}-\bm{e}_{0}^{\bm{u},\ast}|_{\bar{a}_0} \le \Theta |\bm{e}_{0}^{\bm{u}}|_{\bar{a}_0}, \quad 
|{e}_{0}^{\tilde{p}}-{e}_{0}^{\tilde{p},\ast}|_{\bar{d}_0} \le \Theta |{e}_{0}^{\tilde{p}}|_{\bar{d}_0}.
\end{align}
Using (\ref{oreq1}) and  squaring both sides of the equations in (\ref{ortest2}) gives
\begin{subequations}\label{relexdis1}
\begin{align}
|\bm{e}_{0}^{\bm{u}}|_{\bar{a}_0}^2-|\bm{e}_{0}^{\bm{u},\ast}|_{\bar{a}_0}^2  =  |\bm{e}_{0}^{\bm{u}}-\bm{e}_{0}^{\bm{u},\ast}|_{\bar{a}_0}^2&\le \Theta^2 |\bm{e}_{0}^{\bm{u}}|_{\bar{a}_0}^2,\\
|{e}_{0}^{\tilde{p}}|_{\bar{d}_0}^2-|{e}_{0}^{\tilde{p},\ast}|_{\bar{d}_0}^2 = |{e}_{0}^{\tilde{p}}-{e}_{0}^{\tilde{p},\ast}|_{\bar{d}_0}^2&\le \Theta^2 |{e}_{0}^{\tilde{p}}|_{\bar{d}_0}^2,
\end{align}
\end{subequations}
which proves the right hand-side inequalities in (\ref{ortest3}).
\end{proof}

We can interpret $\Theta$ {in the above result} as a saturation constant (cf.~\cite[Section~3.1]{prudhommeOden1999} and~\cite[Chapter~5]{MJ}). Indeed, if $\bm{V}_{h,\Lambda}^\ast$ and $W_{h,\Lambda}^\ast$ are not informative enough, then we will obtain $\bm{e}_{0}^{\bm{u},\ast} \approx 0$ and ${e}_{0}^{\tilde{p},\ast} \approx 0$ and thus $\Theta \approx 1$.
However, if the spaces are sufficiently rich, then $\bm{e}_{0}^{\bm{u},\ast}$ (resp., ${e}_{0}^{\tilde{p},\ast}$) will be a good estimator for $\bm{e}_{0}^{\bm{u}}$ (resp., ${e}_{0}^{\tilde{p}}$) and $\Theta$ will be close to zero. 

 %$\bm{V}_{h,\Lambda}^\ast$ and $W_{h,\Lambda}^\ast$ need to be rich enough that $\bm{e}_{0}^{\bm{u},\ast}\neq0$ and ${e}_{0}^{\tilde{p},\ast}\neq 0$.

%{\color{red}[NEED TO EXPLAIN WHAT $\Theta$ represents. Saturation constant? How do we interpret it? What kinds of spaces would lead to $\Theta$ being close to zero? ] }
%\vspace{0.2in} 

Although the estimates $|\bm{e}_{0}^{\bm{u},\ast}|_{\bar{a}_0}$ and $|{e}_{0}^{\tilde{p},\ast}|_{\bar{d}_0}$ are computable, the dimensions of $\bm{V}_{h,\Lambda}^\ast$ and $ W_{h,\Lambda}^\ast$ may be much larger than those of $\bm{V}_{h,\Lambda}$ and $ W_{h,\Lambda}$. Fortunately, we can exploit the structure shown in \eqref{enriched_spacesV} and \eqref{enriched_spacesW} to obtain lower-dimensional problems, leading to estimates that are cheaper to compute. A suitable strategy for estimating the energy error for scalar diffusion problems was developed in~\cite{bps14}. In a similar manner, instead of solving \eqref{disres11} and \eqref{disres13}, we consider the {alternative} problems: 
find $\bm{e}_{2}^{\bm{u}} \in \bm{V}_{2}$ and ${e}_{2}^{\tilde{p}} \in W_{2} $ such that
\begin{subequations}\label{disev1_cp}
\begin{align}\label{disres11_cp}
\bar{a}_0(\bm{e}_{2}^{\bm{u}},\bm{v})&=\mathcal{R}^{\bm{u}}(\bm{v}) \qquad  \forall \bm{v}\in {\bm{V}}_{2},\\
\bar{d}_0({e}_{2}^{\tilde{p}},\tilde{q})&=\mathcal{R}^{\tilde{p}}(\tilde{q}) \qquad \, \forall \tilde{q}\in W_{2}, \label{disres13_cp}
\end{align} 
\end{subequations}
where we now define
$$\bm{V}_{2}:= \widetilde{\bm{V}}_{h, \Lambda} \oplus \bm{V}_{h, \mathfrak{Q}}, \qquad W_{2}:=\widetilde{W}_{h,\Lambda} \oplus W_{h,\mathfrak{Q}}. $$
Note that this means that we only solve the problems on the spaces that have been added to  $\bm{V}_{h,\Lambda}$ and $W_{h,\Lambda}$ {to form $\bm{V}_{h,\Lambda}^\ast$ and $ W_{h,\Lambda}^\ast$}. We then define the error estimates
\begin{align} \label{err:estimates}
\eta_{1}:=|\bm{e}_{2}^{\bm{u}} |_{\bar{a}_{0}},  \quad \eta_2 := |{e}_{0}^p|_{\bar{c}}, \quad  \eta_{3}:=|e_{2}^{\tilde{p}} |_{\bar{d}_{0}},
\end{align}
where we recall that $e_{0}^{p}$ is given {directly} by \eqref{e_0_p_def}.

Now, we have the {decompositions}
\begin{align}
\bm{e}_{2}^{\bm{u}} = \tilde{\bm{e}}_{h,\Lambda}^{\bm{u}} + \bm{e}_{h,\mathfrak{Q}}^{\bm{u}}, \quad
{e}_{2}^{\tilde{p}} = \tilde{e}_{h,\Lambda}^{\tilde{p}} + {e}_{h,\mathfrak{Q}}^{\tilde{p}},
\end{align}
where $\tilde{\bm{e}}_{h,\Lambda}^{\bm{u}}\in \widetilde{\bm{V}}_{h, \Lambda},$ $\bm{e}_{h,\mathfrak{Q}}^{\bm{u}}\in \bm{V}_{h,\mathfrak{Q}}$, $\tilde{{e}}_{h,\Lambda}^{\tilde{p}}\in \widetilde{W}_{h,\Lambda},$  and ${e}_{h,\mathfrak{Q}}^{\tilde{p}}\in W_{h, \mathfrak{Q}}$. Choosing test functions $\bm{v}\in \widetilde{{\bm{V}}}_{h,\Lambda}$ in \eqref{disres11_cp} and $\tilde{q} \in \widetilde{{W}}_{h,\Lambda}$ in \eqref{disres13_cp} and using the orthogonality of the polynomials in $S_{\Lambda}$ and $S_{\mathfrak{Q}}$  with respect to {the measure} $\pi$ gives 
\begin{subequations} \label{cdisressp}
\begin{align}\label{cdisressp11}
\bar{a}_0(\tilde{\bm{e}}_{h,\Lambda}^{\bm{u}},\bm{v})&=\mathcal{R}^{\bm{u}}(\bm{v}) \quad\forall \bm{v}\in \widetilde{\bm{V}}_{h,\Lambda},\\
\bar{d}_0(\tilde{e}_{h,\Lambda}^{\tilde{p}},\tilde{q})&=\mathcal{R}^{\tilde{p}}(\tilde{q}) \quad \, \forall \tilde{q}\in \widetilde{{W}}_{h,\Lambda}. \label{cdisressp13}
\end{align}
\end{subequations}
We will refer to $\tilde{\bm{e}}_{h,\Lambda}^{\bm{u}}$ and $\tilde{e}_{h,\Lambda}^{\tilde{p}}$ as the \emph{spatial} error estimators.
Similarly, choosing test functions $\bm{v}\in {\bm{V}}_{h,\mathfrak{Q}}$ in \eqref{disres11_cp} and $\tilde{q} \in {W}_{h,\mathfrak{Q}}$ in \eqref{disres13_cp} gives
\begin{subequations} \label{disrespar}
\begin{align}\label{disrespar11}
\bar{a}_0(\bm{e}_{h,\mathfrak{Q}}^{\bm{u}},\bm{v})&=\mathcal{R}^{\bm{u}}(\bm{v}) \quad\forall \bm{v}\in {\bm{V}}_{h,\mathfrak{Q}},\\
\bar{d}_0({e}_{h,\mathfrak{Q}}^{\tilde{p}},\tilde{q})&=\mathcal{R}^{\tilde{p}}(\tilde{q}) \quad \,\forall \tilde{q}\in {{W}}_{h,\mathfrak{Q}}. \label{disrespar13}
\end{align}
\end{subequations}
We will refer to $\bm{e}_{h,\mathfrak{Q}}^{\bm{u}}$ and ${e}_{h,\mathfrak{Q}}^{\tilde{p}}$ as the \emph{parametric} error estimators.
 
Since $\bar{a}_{0}(\tilde{\bm{e}}_{h,\Lambda}^{\bm{u}}, \bm{e}_{h,\mathfrak{Q}}^{\bm{u}})= 0$ and
$\bar{d}_0(\tilde{e}_{h,\Lambda}^{\tilde{p}}, {e}_{h,\mathfrak{Q}}^{\tilde{p}})= 0$, 
we can write $\eta_{1}$ and $\eta_{3}$ as
\begin{align} \label{eta1_Def}
   \eta_1 := |\bm{e}_{2}^{\bm{u}}|_{\bar{a}_0} & =
   |\tilde{\bm{e}}_{h,\Lambda}^{\bm{u}} + \bm{e}_{h,\mathfrak{Q}}^{\bm{u}}|_{\bar{a}_0} = 
   (|\tilde{\bm{e}}_{h,\Lambda}^{\bm{u}}|_{\bar{a}_0}^2+|\bm{e}_{h,\mathfrak{Q}}^{\bm{u}}|_{\bar{a}_0}^2)^{1/2},\\
%\eta_2 &:= |{e}_{0}^p|_{\bar{c}},\\
\label{eta3_Def}\eta_3 := |e_{2}^{\tilde{p}}| _{\bar{d}_0}& =  |\tilde{e}_{h,\Lambda}^{\tilde{p}}+{e}_{h,\mathfrak{Q}}^{\tilde{p}}|_{\bar{d}_0} = (|\tilde{e}_{h,\Lambda}^{\tilde{p}}|_{\bar{d}_0}^2+|{e}_{h,\mathfrak{Q}}^{\tilde{p}}|_{\bar{d}_0}^2)^{1/2}.
\end{align}
Hence, to compute $\eta_{1}$ and $\eta_{3}$, we solve the four decoupled problems \eqref{cdisressp11}--\eqref{disrespar13}. The next result characterises how well $\eta_{1}$ and $\eta_{3}$ approximate $ | \bm{e}_{0}^{\bm{u},\ast}|_{\bar{a}_0}$ and $ |  e_{0}^{\tilde{p},\ast}|_{\bar{d}_0}$, respectively. 

\begin{lemma}\label{mainlam11}
Let $ e_{0}^{\bm{u},\ast} \in \bm{V}_{h,\Lambda}^{\ast}$ and $ e_{0}^{\tilde{p},\ast} \in W_{h,\Lambda}^{\ast}$ satisfy (\ref{disev1}). Let the error estimates $\eta_1$ and $\eta_3$ be defined as in \eqref{eta1_Def} and \eqref{eta3_Def}. Then,
\begin{align} \label{eq:mainlam11}
\eta_1&\le | \bm{e}_{0}^{\bm{u},\ast}|_{\bar{a}_0} \le \frac{1}{\sqrt{1-\gamma_{1}^2}} \, \eta_1, \qquad 
\eta_3 \le |  e_{0}^{\tilde{p},\ast}|_{\bar{d}_0} \le \frac{1}{\sqrt{1-\gamma_{2}^2}} \, \eta_3,
\end{align}
where % $\gamma:=\max\{\gamma_1,\gamma_2\}$ and 
$\gamma_1$ and $\gamma_2$ are the CBS constants 
{from \eqref{scbic11a} and \eqref{scbic11b} respectively}.
\end{lemma}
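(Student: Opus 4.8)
The plan is to prove both chains of inequalities by a single argument, presented for the displacement quantities; the pressure estimate then follows verbatim with $\bar a_0$, $\bm V_{h,\Lambda}$, $\bm V_2$, $\gamma_1$, $\mathcal R^{\bm u}$ replaced by $\bar d_0$, $W_{h,\Lambda}$, $W_2$, $\gamma_2$, $\mathcal R^{\tilde p}$, using that $\mathcal R^{\tilde p}$ vanishes on $W_{h,\Lambda}$ by the third equation of \eqref{disver11}. The argument adapts the technique of~\cite{bps14}.

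First I would record two structural facts. Subtracting \eqref{disres11_cp} from \eqref{disres11} and testing with $\bm v\in\bm V_2\subset\bm V_{h,\Lambda}^\ast$ gives $\bar a_0(\bm e_0^{\bm u,\ast}-\bm e_2^{\bm u},\bm v)=0$ for all $\bm v\in\bm V_2$, so $\bm e_2^{\bm u}$ is the $\bar a_0$-orthogonal projection of $\bm e_0^{\bm u,\ast}$ onto $\bm V_2$. Setting $\bm r:=\bm e_0^{\bm u,\ast}-\bm e_2^{\bm u}$ (which is $\bar a_0$-orthogonal to $\bm V_2$), Pythagoras gives $|\bm e_0^{\bm u,\ast}|_{\bar a_0}^2=\eta_1^2+|\bm r|_{\bar a_0}^2$, which already yields the left-hand bound $\eta_1\le|\bm e_0^{\bm u,\ast}|_{\bar a_0}$; it remains to bound $|\bm r|_{\bar a_0}$ by $\eta_1$. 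Secondly, since $\bm u_{h,\Lambda}$ solves \eqref{disver11}, the residual $\mathcal R^{\bm u}$ vanishes on $\bm V_{h,\Lambda}$, so \eqref{disres11} gives $\bar a_0(\bm e_0^{\bm u,\ast},\bm v)=0$ for all $\bm v\in\bm V_{h,\Lambda}$.

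The step I expect to require most care is showing that the finite element CBS inequality \eqref{scbic11a} lifts, \emph{with the same constant}, to the tensor-product spaces: namely $|\bar a_0(\bm v,\bm w)|\le\gamma_1\,|\bm v|_{\bar a_0}\,|\bm w|_{\bar a_0}$ for all $\bm v\in\bm V_{h,\Lambda}$ and $\bm w\in\bm V_2=\widetilde{\bm V}_{h,\Lambda}\oplus\bm V_{h,\mathfrak Q}$. I would prove this by expanding $\bm v=\sum_{\bm\alpha\in\Lambda}\bm v_{\bm\alpha}\otimes\psi_{\bm\alpha}$ and $\bm w=\sum_{\bm\alpha\in\Lambda}\widetilde{\bm w}_{\bm\alpha}\otimes\psi_{\bm\alpha}+\sum_{\bm\beta\in\mathfrak Q}\bm w_{\bm\beta}\otimes\psi_{\bm\beta}$ in the orthonormal Legendre basis: since $\Lambda\cap\mathfrak Q=\emptyset$, every product $\int_\Gamma\psi_{\bm\alpha}\psi_{\bm\beta}\,d\pi$ with $\bm\alpha\in\Lambda$, $\bm\beta\in\mathfrak Q$ vanishes, so $\bar a_0(\bm v,\bm w)=\alpha\sum_{\bm\alpha\in\Lambda}\int_D\nabla\bm v_{\bm\alpha}:\nabla\widetilde{\bm w}_{\bm\alpha}\,d\bm x$, and applying \eqref{scbic11a} mode by mode and then Cauchy--Schwarz over the finitely many $\bm\alpha\in\Lambda$ bounds this by $\gamma_1\,|\bm v|_{\bar a_0}\,|\widetilde{\bm w}|_{\bar a_0}$, where $\widetilde{\bm w}$ denotes the $\widetilde{\bm V}_{h,\Lambda}$-component of $\bm w$; the same disjointness makes $\widetilde{\bm V}_{h,\Lambda}$ $\bar a_0$-orthogonal to $\bm V_{h,\mathfrak Q}$, so $|\widetilde{\bm w}|_{\bar a_0}\le|\bm w|_{\bar a_0}$. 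This is the point at which the product structure of the discretisation and the orthonormality of $\{\psi_{\bm\alpha}\}$ enter, and it is what guarantees that the parametric enrichment $\bm V_{h,\mathfrak Q}$ does not degrade the spatial CBS constant; everything else is Hilbert-space bookkeeping.

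Finally I would assemble the estimate. Decompose $\bm r=\bm r_1+\bm r_2$ with $\bm r_1\in\bm V_{h,\Lambda}$ and $\bm r_2\in\bm V_2$, which is possible and unique because $\bm V_{h,\Lambda}^\ast=\bm V_{h,\Lambda}\oplus\bm V_2$. Testing $\bm r\perp_{\bar a_0}\bm V_2$ against $\bm r_2$ gives $\bar a_0(\bm r_1,\bm r_2)=-|\bm r_2|_{\bar a_0}^2$, whence $|\bm r_2|_{\bar a_0}\le\gamma_1|\bm r_1|_{\bar a_0}$ by the lifted CBS inequality, and $|\bm r|_{\bar a_0}^2=|\bm r_1|_{\bar a_0}^2-|\bm r_2|_{\bar a_0}^2\ge(1-\gamma_1^2)|\bm r_1|_{\bar a_0}^2$. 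Testing $\bm e_0^{\bm u,\ast}\perp_{\bar a_0}\bm V_{h,\Lambda}$ against $\bm r_1$, substituting $\bm e_0^{\bm u,\ast}=\bm e_2^{\bm u}+\bm r$ and using $\bar a_0(\bm r,\bm r_1)=|\bm r_1|_{\bar a_0}^2-|\bm r_2|_{\bar a_0}^2=|\bm r|_{\bar a_0}^2$ gives $|\bm r|_{\bar a_0}^2=-\bar a_0(\bm e_2^{\bm u},\bm r_1)\le\gamma_1|\bm e_2^{\bm u}|_{\bar a_0}|\bm r_1|_{\bar a_0}\le\frac{\gamma_1}{\sqrt{1-\gamma_1^2}}\,\eta_1\,|\bm r|_{\bar a_0}$, hence $|\bm r|_{\bar a_0}\le\frac{\gamma_1}{\sqrt{1-\gamma_1^2}}\,\eta_1$. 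Inserting this into $|\bm e_0^{\bm u,\ast}|_{\bar a_0}^2=\eta_1^2+|\bm r|_{\bar a_0}^2$ yields $|\bm e_0^{\bm u,\ast}|_{\bar a_0}^2\le\eta_1^2/(1-\gamma_1^2)$, i.e. the right-hand bound in \eqref{eq:mainlam11}; the degenerate case $\bm r=0$ (which in particular occurs if $\eta_1=0$) is immediate. Running the identical chain with $\bar d_0$, $\gamma_2$, $W_2$ and $\mathcal R^{\tilde p}$ delivers the two bounds for $\eta_3$.
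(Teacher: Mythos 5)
Your proposal is correct, and it rests on the same three pillars as the paper's own argument: the observation that $\bm{e}_{2}^{\bm{u}}$ is the $\bar{a}_0$-orthogonal projection of $\bm{e}_{0}^{\bm{u},\ast}$ onto $\bm{V}_2$, the Galerkin orthogonality $\bar{a}_0(\bm{e}_{0}^{\bm{u},\ast},\bm{v})=0$ for $\bm{v}\in\bm{V}_{h,\Lambda}$, and the strengthened Cauchy--Schwarz inequality to control the coupling between $\bm{V}_{h,\Lambda}$ and the detail spaces. The bookkeeping, however, is arranged differently. The paper decomposes $\bm{e}_{0}^{\bm{u},\ast}=\bm{r}_{h,\Lambda}^{\bm{u}}+\bm{r}_{2}^{\bm{u}}$ and works with two direct estimates, $|\bm{e}_{0}^{\bm{u},\ast}|_{\bar{a}_0}^{2}=\bar{a}_0(\bm{e}_{2}^{\bm{u}},\bm{r}_{2}^{\bm{u}})\le\eta_1|\bm{r}_{2}^{\bm{u}}|_{\bar{a}_0}$ and $|\bm{e}_{0}^{\bm{u},\ast}|_{\bar{a}_0}^{2}\ge(1-\gamma_1^2)|\bm{r}_{2}^{\bm{u}}|_{\bar{a}_0}^{2}$, while its lower bound comes from Cauchy--Schwarz applied to $\bar{a}_0(\bm{e}_{0}^{\bm{u},\ast},\bm{e}_{2}^{\bm{u}})=\eta_1^2$. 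You instead decompose the projection remainder $\bm{r}=\bm{e}_{0}^{\bm{u},\ast}-\bm{e}_{2}^{\bm{u}}$, derive the explicit intermediate bound $|\bm{r}|_{\bar{a}_0}\le\gamma_1(1-\gamma_1^2)^{-1/2}\eta_1$, and assemble both bounds via Pythagoras; this yields exactly the same constants, and as a by-product quantifies the gap between $\eta_1$ and $|\bm{e}_{0}^{\bm{u},\ast}|_{\bar{a}_0}$ rather than only the ratio. A further (welcome) difference is that you prove explicitly that the spatial CBS constants $\gamma_1,\gamma_2$ lift unchanged to the tensor-product setting, i.e.\ $|\bar{a}_0(\bm{v},\bm{w})|\le\gamma_1|\bm{v}|_{\bar{a}_0}|\bm{w}|_{\bar{a}_0}$ for $\bm{v}\in\bm{V}_{h,\Lambda}$, $\bm{w}\in\bm{V}_2$, by expanding in the orthonormal basis $\{\psi_{\bm{\alpha}}\}$ and noting the $\bar{a}_0$-orthogonality of $\widetilde{\bm{V}}_{h,\Lambda}$ and $\bm{V}_{h,\mathfrak{Q}}$; the paper invokes this property more tersely (``using the orthogonality of functions in $S_\Lambda$ and $S_{\mathfrak{Q}}$ \dots and the strengthened Cauchy--Schwarz inequality''), so your mode-by-mode argument usefully fills in that step. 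Your treatment of the $\eta_3$ bounds by the verbatim substitution $(\bar{a}_0,\gamma_1,\bm{V}_2,\mathcal{R}^{\bm{u}})\mapsto(\bar{d}_0,\gamma_2,W_2,\mathcal{R}^{\tilde p})$ is also exactly how the paper proceeds.
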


\begin{proof}
Since $ \bm{e}_{2}^{\bm{u}} \in \bm{V}_{2}$ and $\bm{V}_{2} \subset \bm{V}_{h, \Lambda}^{\ast}$, combining \eqref{disres11} and \eqref{disres11_cp} gives
\begin{align*}
\bar{a}_{0}(\bm{e}_{0}^{\bm{u},\ast}, \bm{e}_{2}^{\bm{u}}) = \bar{a}_{0}(\bm{e}_{2}^{\bm{u}}, \bm{e}_{2}^{\bm{u}}) = \eta_{1}^{2}.
\end{align*} 
Applying the Cauchy-Schwarz inequality then gives $\eta_{1} \le  |\bm{e}_{0}^{\bm{u},\ast}|_{\bar{a}_{0}}$. Similarly, combining \eqref{disres13} and \eqref{disres13_cp}  gives  $\eta_{3} \le  | e_{0}^{\tilde{p},\ast}|_{\bar{d}_{0}}$.

To prove the upper bounds, we decompose $\bm{e}_{0}^{\bm{u},\ast}\in \bm{V}_{h,\Lambda}^\ast$ and ${e}_{0}^{\tilde{p},\ast}\in W_{h,\Lambda}^\ast$ as follows:
\bse \label{err:decomp}
\bea
         \bm{e}_{0}^{\bm{u},\ast} = \bm{r}_{h,\Lambda}^{\bm{u}} + \bm{r}_{2}^{\bm{u}}\quad
         & \rm with \quad &
         \bm{r}_{2}^{\bm{u}} = \tilde{\bm{r}}_{h,\Lambda}^{\bm{u}}+\bm{r}_{h,\mathfrak{Q}}^{\bm{u}}\;  \in \bm{V}_2,
         \label{err:decomp:1}
         \\
         {e}_{0}^{\tilde{p},\ast} = r_{h,\Lambda}^{\tilde{p}} + r_{2}^{\tilde{p}}\quad
         & \rm with \quad &
         r_{2}^{\tilde{p}} = \tilde{r}_{h,\Lambda}^{\tilde{p}}+r_{h,\mathfrak{Q}}^{\tilde{p}}\; \in W_2.
         \label{err:decomp:2}
\eea
\ese
Here, $\bm{r}_{h,\Lambda}^{\bm{u}}\in\bm{V}_{h,\Lambda}$, $\tilde{\bm{r}}_{h,\Lambda}^{\bm{u}}\in\widetilde{\bm{V}}_{h,\Lambda}$, $\bm{r}_{h,\mathfrak{Q}}^{\bm{u}}\in {\bm{V}}_{h,\mathfrak{Q}}$, $r_{h,\Lambda}^{\tilde p}\in W_{h,\Lambda}$, $\tilde{r}_{h,\Lambda}^{\tilde{p}}\in \widetilde{W}_{h,\Lambda}$ and $r_{h,\mathfrak{Q}}^{\tilde{p}}\in {W}_{h,\mathfrak{Q}}$. Recall that $\bm{V}_{h,\Lambda}$ is a subspace of $\bm{V}^\ast_{h,\Lambda}$ and $W_{h,\Lambda}$ is a subspace of $W^\ast_{h,\Lambda}$.
Hence, using the definitions of the residuals \eqref{resid1} and \eqref{resid3}, we conclude from \eqref{disev1} that
\begin{align}
\bar{a}_0(\bm{e}_{0}^{\bm{u},\ast}, \bm{v})&=\mathcal{R}^{\bm{u}}(\bm{v})=0\quad \forall \bm{v}\in\bm{V}_{h,\Lambda}, \label{resest11}\\
\bar{d}_0({e}_{0}^{\tilde{p},\ast}, \tilde{q})&=\mathcal{R}^{\tilde{p}}(\tilde{q})=0\quad \,  \forall \tilde{q}\in{W}_{h,\Lambda}. \label{resest17}
\end{align} 
Now, by (\ref{resest11}) and by combining \eqref{disres11} and \eqref{disres11_cp} again, it follows that 
\be \label{first_bit}
      |\bm{e}_{0}^{\bm{u},\ast}|_{\bar{a}_0}^{2} \; \reff{err:decomp:1}{=} \;
      \bar{a}_0(\bm{e}_{0}^{\bm{u},\ast},\bm{r}_{h,\Lambda}^{\bm{u}}+ \bm{r}_{2}^{\bm{u}}) =
      \bar{a}_0(\bm{e}_{0}^{\bm{u},\ast} ,\bm{r}_{2}^{\bm{u}}) =
      \bar{a}_0(\bm{e}_{2}^{\bm{u}} ,\bm{r}_{2}^{\bm{u}}) \le \eta_1 \, |\bm{r}_{2}^{\bm{u}}|_{\bar{a}_0}.
\ee
Similarly, we have
\begin{align}
\label{first_p} |{e}_{0}^{\tilde{p},\ast}|_{\bar{d}_0}^{2} &\le \eta_3 \, |r_{2}^{\tilde{p}}|_{\bar{d}_0}.
\end{align}
Using the orthogonality of functions in $S_{\Lambda}$ and $S_{\mathfrak{Q}}$ with respect to $\pi$ 
{once more}, and
the strengthened Cauchy-Schwarz inequality in~\eqref{scbic11a}, it follows that
\begin{align} \nonumber
   |\bm{e}_{0}^{\bm{u},\ast}|_{\bar{a}_0}^2 & \reff{err:decomp:1}{=} 
   |\bm{r}_{h,\Lambda}^{\bm{u}}|_{\bar{a}_0}^2 + |{\bm{r}}_{2}^{\bm{u}}|_{\bar{a}_0}^2 +
   2\bar{a}_0(\bm{r}_{h,\Lambda}^{\bm{u}}, \bm{r}_{2}^{\bm{u}}) =
   |\bm{r}_{h,\Lambda}^{\bm{u}}|_{\bar{a}_0}^2 + |{\bm{r}}_{2}^{\bm{u}}|_{\bar{a}_0}^2 +
   2\bar{a}_0(\bm{r}_{h,\Lambda}^{\bm{u}},\tilde{\bm{r}}_{h,\Lambda}^{\bm{u}}),\nonumber\\
   &\ \ \,\ge
   |\bm{r}_{h,\Lambda}^{\bm{u}}|_{\bar{a}_0}^2 + |{\bm{r}}_{2}^{\bm{u}}|_{\bar{a}_0}^2 -
   2 \gamma_{1} \, |\bm{r}_{h,\Lambda}^{\bm{u}}|_{\bar{a}_0} |\tilde{\bm{r}}_{h,\Lambda}^{\bm{u}}|_{\bar{a}_0} \nonumber \\
   &\ \ \,\ge
   |\bm{r}_{h,\Lambda}^{\bm{u}}|_{\bar{a}_0}^2 + |{\bm{r}}_{2}^{\bm{u}}|_{\bar{a}_0}^2 -
   2 \gamma_{1} \, |\bm{r}_{h,\Lambda}^{\bm{u}}|_{\bar{a}_0} |{\bm{r}}_{2}^{\bm{u}}|_{\bar{a}_0} \ge
   (1-\gamma_{1}^2) |\bm{r}_{2}^{\bm{u}}|_{\bar{a}_0}^2.
   \label{second_bit}
\end{align}
In  the same manner, using \eqref{err:decomp:2} and \eqref{scbic11b}, we can show that
\begin{align}
\label{second_p}|e_{0}^{\tilde{p},\ast}|_{\bar{d}_0}^2&\ge (1-\gamma_{2}^2) |r_{2}^{\tilde{p}}|_{\bar{d}_0}^2.
\end{align}
Combining \eqref{first_bit} with \eqref{second_bit} and \eqref{first_p} with \eqref{second_p} gives the upper bounds in~\eqref{eq:mainlam11}.
%\endproof
\end{proof}

\smallskip

Combining {the} individual error estimates in~\eqref{err:estimates} we now define the total error estimate
\begin{align}\label{mainest11}
\eta:=(\eta_1^2+\eta_2^2+\eta_3^2)^{1/2}.
\end{align}
The main result below establishes an equivalence between $\eta$ and the SG-MFEM error.

\smallskip

\begin{theorem}\label{mainth3}
Let $(e^{\bm{u}},e^{p},e^{\tilde{p}}) \in \bm{\mathcal V} \times \mathcal {W} \times {\mathcal W}$ be the error in the SG-MFEM approximation $(\bm{u}_{h,\Lambda},p_{h,\Lambda},\tilde{p}_{h,\Lambda}) \in \bm{V}_{h,\Lambda} \times W_{h, \Lambda} \times W_{h,\Lambda}$ of the solution to (\ref{scm12}). Then
\begin{align}
C_6 \, \eta \, \le|||(e^{\bm{u}},e^{p},e^{\tilde{p}})||| \le \frac{C_7}{\sqrt{1-\gamma^2}\sqrt{1-\Theta^2}}\, \eta,
\end{align}
where $C_6$ and $C_7$ are defined in Theorem \ref{mainth1},
$\gamma := \max\{\gamma_1,\gamma_2\} \in [0, 1)$ with the CBS constants
$\gamma_1$ and $\gamma_2$ from \eqref{scbic11},
and $\Theta \in [0,1)$ is the constant given in Lemma \ref{mainth2}.
\end{theorem}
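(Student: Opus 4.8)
The plan is to stitch together three facts already in hand: the residual--error equivalence of Theorem~\ref{mainth1}, the saturation-type sandwich of Lemma~\ref{mainth2} (which supplies $\Theta$), and the strengthened Cauchy--Schwarz sandwich of Lemma~\ref{mainlam11} (which supplies the CBS constants $\gamma_1,\gamma_2$). First I would recall from \eqref{duales1}--\eqref{duales3} that the three weighted dual norms coincide with the Riesz-representer seminorms, $\|\mathcal{R}^{\bm{u}}\|_\ast = |\bm{e}_{0}^{\bm{u}}|_{\bar{a}_0}$, $\|\mathcal{R}^{p}\|_\ast = |e_{0}^{p}|_{\bar{c}} = \eta_2$ and $\|\mathcal{R}^{\tilde p}\|_\ast = |e_{0}^{\tilde p}|_{\bar{d}_0}$, the middle one being an \emph{exact} identity thanks to \eqref{e_0_p_def}. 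Chaining the left inequality of Lemma~\ref{mainth2} with the left inequality of Lemma~\ref{mainlam11} (and the two right inequalities likewise), and writing $\gamma := \max\{\gamma_1,\gamma_2\}$ and $\kappa := \big(\sqrt{1-\gamma^2}\,\sqrt{1-\Theta^2}\big)^{-1}\ge 1$, I obtain the component-wise equivalences
\[
   \eta_1 \;\le\; \|\mathcal{R}^{\bm{u}}\|_\ast \;\le\; \kappa\,\eta_1,
   \qquad
   \|\mathcal{R}^{p}\|_\ast \;=\; \eta_2,
   \qquad
   \eta_3 \;\le\; \|\mathcal{R}^{\tilde p}\|_\ast \;\le\; \kappa\,\eta_3 .
\]
(The degenerate cases $\bm{e}_{0}^{\bm{u}}=0$ or $e_{0}^{\tilde p}=0$ excluded in Lemma~\ref{mainth2} force $\eta_1=0$ or $\eta_3=0$ and are handled trivially.)

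For the lower bound $C_6\,\eta \le |||(e^{\bm{u}},e^{p},e^{\tilde p})|||$ I would simply insert these into the left inequality of Theorem~\ref{mainth1} and use that the $\ell^1$ sum of a nonnegative triple dominates its $\ell^2$ norm:
\[
   |||(e^{\bm{u}},e^{p},e^{\tilde p})||| \;\ge\; C_6\big(\|\mathcal{R}^{\bm{u}}\|_\ast+\|\mathcal{R}^{p}\|_\ast+\|\mathcal{R}^{\tilde p}\|_\ast\big)
   \;\ge\; C_6\,(\eta_1+\eta_2+\eta_3) \;\ge\; C_6\,(\eta_1^2+\eta_2^2+\eta_3^2)^{1/2} \;=\; C_6\,\eta .
\]

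For the upper bound I would \emph{not} feed the estimates into the $\ell^1$-version of Theorem~\ref{mainth1}, since bounding $\|\mathcal{R}^{\bm{u}}\|_\ast+\|\mathcal{R}^{p}\|_\ast+\|\mathcal{R}^{\tilde p}\|_\ast \le \kappa(\eta_1+\eta_2+\eta_3)$ and then passing to $\eta$ would cost a spurious factor $\sqrt{3}$. Instead I would re-enter the proof of Theorem~\ref{mainth1} (equivalently, the stability estimate of Lemma~\ref{welpose11}): for the test triple $(\bm{v},q,\tilde q)$ produced there one has $C_1 E_{\min}|||(e^{\bm{u}},e^{p},e^{\tilde p})|||^2 \le \mathcal{R}^{\bm{u}}(\bm{v})+\mathcal{R}^{p}(q)+\mathcal{R}^{\tilde p}(\tilde q)$ with $|||(\bm{v},q,\tilde q)|||\le C_2\,|||(e^{\bm{u}},e^{p},e^{\tilde p})|||$. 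Bounding each residual term by its weighted dual norm times the matching weighted (semi)norm of the test function, then applying the discrete Cauchy--Schwarz inequality to the inner product of the $3$-vectors $(\kappa\eta_1,\kappa\eta_2,\kappa\eta_3)$ and $\big(\alpha^{1/2}\|\nabla\bm{v}\|_{\bm{\mathcal{W}}},\,(\alpha^{-1}+(\alpha\beta)^{-1})^{1/2}\|q\|_{\mathcal{W}},\,(\alpha\beta)^{-1/2}\|\tilde q\|_{\mathcal{W}}\big)$ — whose second factor is precisely $|||(\bm{v},q,\tilde q)|||$ — gives $\mathcal{R}^{\bm{u}}(\bm{v})+\mathcal{R}^{p}(q)+\mathcal{R}^{\tilde p}(\tilde q) \le \kappa\,\eta\,|||(\bm{v},q,\tilde q)||| \le \kappa\,\eta\,C_2\,|||(e^{\bm{u}},e^{p},e^{\tilde p})|||$. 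Dividing through by $|||(e^{\bm{u}},e^{p},e^{\tilde p})|||$ yields $|||(e^{\bm{u}},e^{p},e^{\tilde p})||| \le (C_2/(C_1 E_{\min}))\,\kappa\,\eta = C_7\,\kappa\,\eta$, which is the claimed bound.

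The one step needing care is exactly this last one: to land the sharp constant $C_7/\big(\sqrt{1-\gamma^2}\sqrt{1-\Theta^2}\big)$ rather than a version inflated by $\sqrt{3}$, the three residual contributions must be paired with the test function in the energy/$\ell^2$ sense — which is why the weights in the dual norms $\|\cdot\|_\ast$ and in $|||\cdot|||$ were designed to match — instead of being estimated separately and then summed. Everything else is the bookkeeping of chaining Lemmas~\ref{mainth2} and~\ref{mainlam11} and tracking which constant comes from where.
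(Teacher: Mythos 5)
Your proof is correct, and it follows the same basic route as the paper: the paper's own proof of this theorem is a single line, ``combine Theorem~\ref{mainth1}, Lemma~\ref{mainth2} and Lemma~\ref{mainlam11}'', which is precisely the chaining you carry out, using the identities \eqref{duales1}--\eqref{duales3} and the exactness of $\eta_2$ via \eqref{e_0_p_def}. The one place where you go beyond a literal combination is the upper bound, and your instinct there is sound: feeding the componentwise bounds $\|\mathcal{R}^{\bm u}\|_\ast\le\kappa\eta_1$, $\|\mathcal{R}^{\tilde p}\|_\ast\le\kappa\eta_3$ into the $\ell^1$-form of Theorem~\ref{mainth1} and then passing from $\eta_1+\eta_2+\eta_3$ to $\eta$ would indeed cost a factor $\sqrt{3}$, so re-entering the stability estimate of Lemma~\ref{welpose11} and pairing the three residuals with the test triple by Cauchy--Schwarz in $\mathbb{R}^3$ (exploiting that the weights in the dual norms match those in $|||\cdot|||$) is exactly what is needed to land the constant $C_7/\bigl(\sqrt{1-\gamma^2}\sqrt{1-\Theta^2}\bigr)$ as stated; the paper leaves this step implicit. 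Your lower bound via $\ell^1\ge\ell^2$ reproduces the paper's $C_6$, and your handling of the degenerate cases $\bm{e}_0^{\bm u}=0$ or $e_0^{\tilde p}=0$ is consistent with the hypotheses of Lemma~\ref{mainth2}. In short, the argument is complete and, on the constant-tracking point, somewhat more careful than the paper's own proof.
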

 \begin{proof}
Combining the results of Theorem \ref{mainth1}, Lemma \ref{mainth2} and Lemma \ref{mainlam11} leads to the stated result.
\end{proof}

\smallskip

\begin{corollary} \label{corr_E}
Let $(e^{\bm{u}},e^{p},e^{\tilde{p}}) \in \bm{\mathcal V} \times \mathcal {W} \times {\mathcal W}$ be the error in the SG-MFEM approximation $(\bm{u}_{h,\Lambda},p_{h,\Lambda},\tilde{p}_{h,\Lambda}) \in \bm{V}_{h,\Lambda} \times W_{h, \Lambda} \times W_{h,\Lambda}$ of the solution to \eqref{scm12}. Then 
\begin{align}
{\cal E} \le \frac{C_7}{\sqrt{1-\gamma^2}\sqrt{1-\Theta^2}} \eta, 
\end{align}
where 
${\cal E} \,{:=}\,
 \sqrt{\alpha||\mathbb{E}(\nabla\bm{e}^{\bm{u}})||_{L^{2}(D)}^2 \,{+}\,
 (\alpha^{-1}+(\alpha\beta)^{-1})||\mathbb{E}(e^p)||_{L^{2}(D)}^2 \,{+}\,
 (\alpha\beta)^{-1} ||\mathbb{E}(e^{\tilde{p}})||_{L^{2}(D)}^2}
$
defines an alternative norm of the error and $C_7$, $\gamma$ and $\Theta$ are the same constants as in Theorem \ref{mainth3}.
\end{corollary}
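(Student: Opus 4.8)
The plan is to reduce the corollary to the upper bound already established in Theorem~\ref{mainth3}. Since the right-hand side of the asserted inequality is precisely the quantity that bounds $|||(e^{\bm{u}},e^{p},e^{\tilde{p}})|||$ from above in that theorem, it suffices to prove the single comparison $\mathcal{E}\le|||(e^{\bm{u}},e^{p},e^{\tilde{p}})|||$; the claim then follows by transitivity.

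The only new ingredient is a Jensen-type inequality asserting that taking the expectation does not increase a Bochner norm: for any normed space $X(D)$ and any $v\in L^2_\pi(\Gamma,X(D))$ one has $\|\mathbb{E}(v)\|_{X}\le\|v\|_{L^2_\pi(\Gamma,X(D))}$. I would prove this by first using the triangle inequality for the Bochner integral, $\|\mathbb{E}(v)\|_X=\big\|\int_\Gamma v(\cdot,\bm{y})\,d\pi(\bm{y})\big\|_X\le\int_\Gamma\|v(\cdot,\bm{y})\|_X\,d\pi(\bm{y})$, and then, since $\pi$ is a probability measure, the Cauchy--Schwarz inequality, $\int_\Gamma\|v\|_X\,d\pi\le\big(\int_\Gamma\|v\|_X^2\,d\pi\big)^{1/2}=\|v\|_{L^2_\pi(\Gamma,X(D))}$. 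Applying this with $X(D)=\bm{L}^2(D)$ to $\nabla e^{\bm{u}}$ — using that $\mathbb{E}(\nabla e^{\bm{u}})=\nabla\mathbb{E}(e^{\bm{u}})$, which is legitimate because $e^{\bm{u}}\in\bm{\mathcal{V}}=L^2_\pi(\Gamma,\bm{H}^1_{E_0}(D))$ so spatial differentiation commutes with integration over $\Gamma$ — and with $X(D)=L^2(D)$ to $e^{p}$ and $e^{\tilde{p}}$, I obtain $\|\mathbb{E}(\nabla e^{\bm{u}})\|_{L^2(D)}\le\|\nabla e^{\bm{u}}\|_{\bm{\mathcal{W}}}$, $\|\mathbb{E}(e^{p})\|_{L^2(D)}\le\|e^{p}\|_{\mathcal{W}}$ and $\|\mathbb{E}(e^{\tilde{p}})\|_{L^2(D)}\le\|e^{\tilde{p}}\|_{\mathcal{W}}$.

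Squaring these three inequalities, multiplying them by the weights $\alpha$, $\alpha^{-1}+(\alpha\beta)^{-1}$ and $(\alpha\beta)^{-1}$ respectively, and summing, the resulting right-hand side is exactly $|||(e^{\bm{u}},e^{p},e^{\tilde{p}})|||^2$ as defined in~\eqref{normden}; hence $\mathcal{E}\le|||(e^{\bm{u}},e^{p},e^{\tilde{p}})|||$. Chaining this with the upper bound of Theorem~\ref{mainth3} completes the argument. There is no substantive obstacle here: the comparison is essentially a one-line Jensen estimate, and the only point deserving a word of care is the interchange of $\nabla$ and $\mathbb{E}$, which is the reason $\mathcal{E}$ is most naturally analysed in the form $\mathbb{E}(\nabla e^{\bm{u}})$ rather than $\nabla\mathbb{E}(e^{\bm{u}})$.
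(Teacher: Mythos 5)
Your proposal is correct and follows essentially the same route as the paper: the paper's proof is exactly "Theorem \ref{mainth3} plus Jensen's inequality plus the definition of $|||\cdot|||$ in \eqref{normden}", which is what you carry out, with the Jensen step $\|\mathbb{E}(v)\|_X\le\|v\|_{L^2_\pi(\Gamma,X(D))}$ made explicit via the Bochner triangle inequality and Cauchy--Schwarz on the probability measure $\pi$. Your remark that no interchange of $\nabla$ and $\mathbb{E}$ is actually needed, since $\mathcal{E}$ is stated directly in terms of $\mathbb{E}(\nabla e^{\bm{u}})$, is a fair observation and does not affect the argument.
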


\begin{proof}
The proof directly follows from Theorem \ref{mainth3}, using Jensen's inequality and the definition of the norm $||| \cdot |||$ {in \eqref{normden}}.
\end{proof}

%%%%%%%%%%%%%%%%%%%%%%%%%%%%%%%%%%%%%%%%%%%%%%%%%%%%%%%%%%%%%%%%%%%%%%%%%%%%%%%%%%%%%%%%%%
\section{Proxies for the potential error reduction in an adaptive setting} \label{sec5}
%
% DUPLICATE - NOT NEEDED - SAVE SPACE
%\begin{align}\label{errredeq11}
%\bar{a}_0(\tilde{\bm{e}}^{\bm{u}}_{h,\Lambda},\bm{v})=\mathcal{R}^{\bm{u}}(\bm{v})\quad \forall \bm{v}\in\widetilde{\bm{V}}_{h,\Lambda},\quad \mbox{and}\quad \bar{d}_{0}(\tilde{e}%_{h,\Lambda}^{\tilde{p}},\tilde{q})=\mathcal{R}^{\tilde{p}}(\tilde{q})\quad\forall\tilde{q}\in\widetilde{W}_{h,\Lambda},
%\end{align} 

%\begin{align}
%\bar{a}_0({\bm{e}}^{\bm{u}}_{h,\mathfrak{Q}},\bm{v})=\mathcal{R}^{\bm{u}}(\bm{v})\quad \forall \bm{v}\in{\bm{V}}_{h,\mathfrak{Q}},\quad \mbox{and}\quad \bar{d}_{0}({e}_{h,\mathfrak{Q}}^{\tilde{p}},\tilde{q})=\mathcal{R}^{\tilde{p}}(\tilde{q})\quad\forall\tilde{q}\in{W}_{h,\mathfrak{Q}}.
%\end{align}
%
Recall that the \emph{spatial} error estimators 
$\tilde{\bm{e}}^{\bm{u}}_{h,\Lambda} \in \widetilde{\bm{V}}_{h,\Lambda}$  and 
$\tilde{e}_{h,\Lambda}^{\tilde{p}} \in \widetilde{W}_{h,\Lambda}$ {contributing to $\eta_{1}$ and $\eta_{3}$} 
satisfy \eqref{cdisressp11} and \eqref{cdisressp13}, respectively, and 
\rbl{that} the \emph{parametric} error estimators
$\bm{e}^{\bm{u}}_{h,\mathfrak{Q}} \in {\bm{V}}_{h,\mathfrak{Q}}$ and 
$e^{\tilde{p}}_{h,\mathfrak{Q}} \in {W}_{h,\mathfrak{Q}}$ satisfy \eqref{disrespar11} and \eqref{disrespar13}. 
\rbl{Let us also} define a {\emph{third}} spatial error estimator
$\tilde{e}^{p}_{h,\Lambda} \in \widetilde{W}_{h,\Lambda}$ that satisfies
\begin{align}\label{errredeq12}
\bar{c}(\tilde{e}^{{p}}_{h,\Lambda},q)=\mathcal{R}^{p}(q)\quad \forall q\in \widetilde{W}_{h,\Lambda}.
\end{align}
Combining the three spatial error estimators and the two parametric error estimators
\rbl{gives}
\bse \label{err_red_est}
   \begin{align}
        \eta_{h^\ast,\Lambda} &:=
        \big(|\tilde{\bm{e}}_{h,\Lambda}^{\bm{u}}|_{\bar{a}_0}^2+|\tilde{e}^{p}_{h,\Lambda}|_{\bar{c}}^{2} +
        | \tilde{e}_{{h,\Lambda}}^{\tilde{p}}|_{\bar{d}_0}^2\big)^{1/2},
        \label{err_red_est_x}
        \\
        \eta_{h,\mathfrak{Q}} &:=
        \big(|\bm{e}_{h,\mathfrak{Q}}^{\bm{u}}|_{\bar{a}_0}^2 +
        |{e}_{h,\mathfrak{Q}}^{\tilde{p}}|_{\bar{d}_0}^2\big)^{1/2}.
        \label{err_red_est_y}
   \end{align}
\ese
\rbl{We will use these as error reduction proxies within an adaptive refinement scheme.} 

%Note that we need to solve  the additional problem \eqref{errredeq12} to compute $\eta_{h^\ast,\Lambda}$ but this is not required for the computation of the total error estimate $\eta$ %in \eqref{mainest11}.

To simplify notation, let ${\cal U} := (\bm{u}, p, \tilde{p}) \in \bm{\mathcal{V}} \times {\cal W} \times{\cal {W}}$ denote the exact solution to~\eqref{scm12} and let ${\cal U}_{h,\Lambda} := (\bm{u}_{h,\Lambda},p_{h,\Lambda},\tilde{p}_{h,\Lambda}) \in  \bm{V}_{h,\Lambda}\times W_{h,\Lambda}\times W_{h,\Lambda}$ denote the SG-MFEM approximation. Similarly, let ${\cal U}_{h^\ast,\Lambda} := (\bm{u}_{h^\ast,\Lambda},p_{h^\ast,\Lambda},\tilde{p}_{h^\ast,\Lambda}) \in 
  \bm{V}_{h^\ast,\Lambda}\times W_{h^\ast,\Lambda}\times W_{h^\ast,\Lambda}$ (resp., ${\cal U}_{h,\Lambda^\ast} := (\bm{u}_{h,\Lambda^\ast},p_{h,\Lambda^\ast},\tilde{p}_{h,\Lambda^\ast}) \in \bm{V}_{h,\Lambda^\ast}\times W_{h,\Lambda^\ast}\times W_{h,\Lambda^\ast}$) denote the enhanced SG-MFEM approximation corresponding to the pair $\bm{V}_h^\ast$--$W_h^\ast$ of enriched finite element spaces (resp., the enriched polynomial space $S_\Lambda^\ast$). That is, ${\cal U}_{h^\ast,\Lambda}$ (resp., ${\cal U}_{h,\Lambda^\ast}$) represents the Galerkin solution that would be obtained by enriching the finite element spaces associated with the spatial domain (resp., the polynomial space associated with the parameter domain). 
\rbl{From the triangle inequality we have
\[
   |||{\cal U} - {\cal U}_{h,\Lambda}||| -    |||{\cal U} - {\cal U}_{h^\ast,\Lambda}|||  \le 
   |||{\cal U}_{h^\ast,\Lambda}-{\cal U}_{h,\Lambda}|||.
\]
\rbl{Thus, we can quantify the potential reduction in the $|||\cdot|||$-norm of the error that would be achieved by 
enriching the finite element spaces, by  estimating the quantity 
$|||{\cal U}_{h^\ast,\Lambda}-{\cal U}_{h,\Lambda}|||$}. Similarly, we can {quantify} the \rbl{potential reduction}
 that would be achieved by enriching the polynomial space for the parametric part, by estimating $|||{\cal U}_{h,\Lambda^\ast}-{\cal U}_{h,\Lambda}|||$}.  The next result shows that $\eta_{h^\ast,\Lambda}$ and $\eta_{h,\mathfrak{Q}}$ provide reliable and efficient proxies for $|||{\cal U}_{h^\ast,\Lambda}-{\cal U}_{h,\Lambda}|||$ and $|||{\cal U}_{h,\Lambda^\ast}-{\cal U}_{h,\Lambda}|||$, respectively. 
%Hence, we will refer to them as \emph{error reduction estimates}. 

\begin{theorem}\label{mainth4}
{Let ${\cal U}_{h,\Lambda} \in \bm{V}_{h,\Lambda}\times W_{h,\Lambda}\times W_{h,\Lambda}$ be the SG-MFEM approximation of the solution to~\eqref{scm12} and define the enhanced SG-MFEM approximations ${\cal U}_{h^\ast,\Lambda}  \in \bm{V}_{h^\ast,\Lambda}\times W_{h^\ast,\Lambda}\times W_{h^\ast,\Lambda}
$ and ${\cal U}_{h,\Lambda^\ast} \in \bm{V}_{h,\Lambda^\ast}\times W_{h,\Lambda^\ast}\times W_{h,\Lambda^\ast} $ as above.} Then,
\begin{align}
   C_6 \,  \eta_{h^\ast,\Lambda} & \le
   |||{\cal U}_{h^\ast,\Lambda}-{\cal U}_{h,\Lambda}||| 
   \le
   \frac{{\widehat{C}_7}}{\sqrt{1-\gamma^2}} \, \eta_{h^\ast,\Lambda},
   \label{errredin1}
\\
   C_6 \, \eta_{h,\mathfrak{Q}} & \le
   |||{\cal U}_{h,\Lambda^\ast}-{\cal U}_{h,\Lambda}|||
   \le
   {\widehat{C}_7} \, \eta_{h,\mathfrak{Q}},
   \label{errredin2}
\end{align}
{where $\widehat{C}_7 := \frac{\widehat{C}_2}{\widehat{C}_1 E_{\min}}$ 
with $\widehat{C}_1$ and $\widehat{C}_2$ defined as in Lemma~\ref{diswelpose11} 
\rbl{except that the inf--sup constant is} associated with the finite element spaces $\bm{V}_{h}^{\ast}$ and $W_{h}^{\ast}$}. \rbl{The constant}  $C_6$ is defined as in Theorem~\ref{mainth1}
and $\gamma \in [0, 1)$ is \rbl{the constant}  in Theorem~\ref{mainth3}.
\end{theorem}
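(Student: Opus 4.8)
\emph{Proof plan.}\; The plan is to realise each of the two increments $\mathcal{U}_{h^\ast,\Lambda}-\mathcal{U}_{h,\Lambda}$ and $\mathcal{U}_{h,\Lambda^\ast}-\mathcal{U}_{h,\Lambda}$ as the solution of a \emph{discrete} residual equation, and then repeat the reasoning behind Theorem~\ref{mainth1} and Lemma~\ref{mainlam11} purely at the discrete level (so that, in contrast to Theorem~\ref{mainth3}, no saturation constant enters, since we compare against an enhanced discrete solution rather than the exact one). Subtracting the Galerkin equations~\eqref{disver11} for $\mathcal{U}_{h,\Lambda}$ from those for $\mathcal{U}_{h^\ast,\Lambda}$ shows that $(\bm{E}^{\bm{u}},E^{p},E^{\tilde{p}}):=\mathcal{U}_{h^\ast,\Lambda}-\mathcal{U}_{h,\Lambda}$ satisfies~\eqref{scm11aer} with the \emph{same} residuals $\mathcal{R}^{\bm{u}},\mathcal{R}^{p},\mathcal{R}^{\tilde{p}}$ of~\eqref{resid1}--\eqref{resid3}, tested now over $\bm{V}_{h^\ast,\Lambda}\times W_{h^\ast,\Lambda}\times W_{h^\ast,\Lambda}$; likewise $\mathcal{U}_{h,\Lambda^\ast}-\mathcal{U}_{h,\Lambda}$ satisfies the same system tested over $\bm{V}_{h,\Lambda^\ast}\times W_{h,\Lambda^\ast}\times W_{h,\Lambda^\ast}$. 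Two structural facts are used repeatedly: by Galerkin orthogonality the residuals vanish on $\bm{V}_{h,\Lambda}\times W_{h,\Lambda}\times W_{h,\Lambda}$, and $\mathcal{R}^{p}$ (which involves neither $E$ nor parametric modes outside $S_\Lambda$) vanishes additionally on $W_{h,\mathfrak{Q}}$, which is why $\eta_{h,\mathfrak{Q}}$ in~\eqref{err_red_est_y} carries no pressure term.

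For the upper bounds I would apply the discrete stability estimate of Lemma~\ref{diswelpose11} (to the enriched space relevant to each increment, which fixes the constant $\widehat{C}_7$) to produce a test triple $(\bm{v},q,\tilde{q})$ in that space with $|||(\bm{v},q,\tilde{q})|||\le\widehat{C}_2\,|||\mathcal{U}_{h^\ast,\Lambda}-\mathcal{U}_{h,\Lambda}|||$ and $\widehat{C}_1E_{\min}\,|||\mathcal{U}_{h^\ast,\Lambda}-\mathcal{U}_{h,\Lambda}|||^2\le\mathcal{R}^{\bm{u}}(\bm{v})+\mathcal{R}^{p}(q)+\mathcal{R}^{\tilde{p}}(\tilde{q})$. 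Decomposing $\bm{v}=\bm{v}_{h,\Lambda}+\tilde{\bm{v}}_{h,\Lambda}$ along~\eqref{enriched_spacesV}--\eqref{enriched_spacesW} (and $q,\tilde{q}$ analogously), Galerkin orthogonality discards the coarse components, and~\eqref{cdisressp11},~\eqref{errredeq12},~\eqref{cdisressp13} give $\mathcal{R}^{\bm{u}}(\bm{v})=\bar{a}_0(\tilde{\bm{e}}_{h,\Lambda}^{\bm{u}},\tilde{\bm{v}}_{h,\Lambda})$, $\mathcal{R}^{p}(q)=\bar{c}(\tilde{e}_{h,\Lambda}^{p},\tilde{q}_{h,\Lambda})$, $\mathcal{R}^{\tilde{p}}(\tilde{q})=\bar{d}_0(\tilde{e}_{h,\Lambda}^{\tilde{p}},\tilde{q}_{h,\Lambda})$. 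Controlling the detail components by the whole test function via the strengthened Cauchy--Schwarz inequalities~\eqref{scbic11a}--\eqref{scbic11b}, lifted to $D\times\Gamma$ through $L_\pi^2$-orthonormality of the polynomial basis and the completing-the-square step used in~\eqref{second_bit}, costs factors $(1-\gamma_1^2)^{-1/2}$ and $(1-\gamma_2^2)^{-1/2}$; a Cauchy--Schwarz in $\mathbb{R}^3$ pairing the three residual terms with the three pieces of $|||(\bm{v},q,\tilde{q})|||^2$ from~\eqref{normden} then yields $\eta_{h^\ast,\Lambda}$ rather than a coarser $\ell^1$ sum, and dividing out $|||\mathcal{U}_{h^\ast,\Lambda}-\mathcal{U}_{h,\Lambda}|||$ gives~\eqref{errredin1}. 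The parametric bound~\eqref{errredin2} is the same argument, using~\eqref{disrespar11},~\eqref{disrespar13}, except that $\mathcal{R}^{p}$ vanishes on the whole test space and the splitting $\bm{V}_{h,\Lambda^\ast}=\bm{V}_{h,\Lambda}\oplus\bm{V}_{h,\mathfrak{Q}}$ (resp.\ $W_{h,\Lambda^\ast}=W_{h,\Lambda}\oplus W_{h,\mathfrak{Q}}$) is \emph{exactly} $\bar{a}_0$- (resp.\ $\bar{d}_0$-) orthogonal because $S_\Lambda\perp S_{\mathfrak{Q}}$ in $L_\pi^2(\Gamma)$, so no CBS factor appears and the bound is simply $\widehat{C}_7\,\eta_{h,\mathfrak{Q}}$.

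For the lower bounds I would use that each spatial estimator is the Riesz representative of a restricted residual, e.g.\ $|\tilde{\bm{e}}_{h,\Lambda}^{\bm{u}}|_{\bar{a}_0}=\sup_{\bm{v}\in\widetilde{\bm{V}}_{h,\Lambda}\setminus\{0\}}\mathcal{R}^{\bm{u}}(\bm{v})/(\alpha^{1/2}\|\nabla\bm{v}\|_{\bm{\mathcal{W}}})$, and, since $\widetilde{\bm{V}}_{h,\Lambda}\subset\bm{V}_{h^\ast,\Lambda}$, substitute $\mathcal{R}^{\bm{u}}(\bm{v})=a(\bm{E}^{\bm{u}},\bm{v})+b(\bm{v},E^{p})$. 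The continuity bounds of Lemma~\ref{tecre11} then reproduce~\eqref{errre11}--\eqref{errre13} verbatim, with $(e^{\bm{u}},e^{p},e^{\tilde{p}})$ replaced by $(\bm{E}^{\bm{u}},E^{p},E^{\tilde{p}})$ and the three dual norms replaced by $|\tilde{\bm{e}}_{h,\Lambda}^{\bm{u}}|_{\bar{a}_0}$, $|\tilde{e}_{h,\Lambda}^{p}|_{\bar{c}}$, $|\tilde{e}_{h,\Lambda}^{\tilde{p}}|_{\bar{d}_0}$; the same regrouping against~\eqref{normden} that establishes the lower bound of Theorem~\ref{mainth1} shows their $\ell^1$ sum is $\le C_6^{-1}|||\mathcal{U}_{h^\ast,\Lambda}-\mathcal{U}_{h,\Lambda}|||$, and since $\eta_{h^\ast,\Lambda}$ is the corresponding $\ell^2$ norm, $C_6\eta_{h^\ast,\Lambda}\le|||\mathcal{U}_{h^\ast,\Lambda}-\mathcal{U}_{h,\Lambda}|||$. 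The bound for $\eta_{h,\mathfrak{Q}}$ is identical, invoking only the subset of terms in~\eqref{errre11}--\eqref{errre13} attached to the two surviving estimators, so the constant $C_6$ again suffices.

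I expect the main obstacle to be the upper bound, specifically the step of bridging between the detail-space estimators (which only see residuals tested against $\widetilde{\bm{V}}_{h,\Lambda}$, $\widetilde{W}_{h,\Lambda}$, or $\bm{V}_{h,\mathfrak{Q}}$, $W_{h,\mathfrak{Q}}$) and the full increment in the enriched space: one must combine Galerkin orthogonality with a strengthened Cauchy--Schwarz estimate, and it is precisely here that the factor $(1-\gamma^2)^{-1/2}$ is forced in the spatial case and absent in the parametric case. A secondary point of care is to pair residual terms with the matching norm components so that the Euclidean quantities $\eta_{h^\ast,\Lambda}$, $\eta_{h,\mathfrak{Q}}$ appear on the right-hand side rather than a weaker $\ell^1$ sum, and to keep straight which discrete inf--sup constant enters $\widehat{C}_7$ in each of the two cases.
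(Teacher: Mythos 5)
Your proposal is correct and follows essentially the same route as the paper: the increments are characterised by the residual equations on the enriched spaces, the lower bounds come from testing the estimator equations and applying the continuity bounds of Lemma~\ref{tecre11}, and the upper bounds come from the discrete stability result of Lemma~\ref{diswelpose11} on the enriched space, Galerkin orthogonality to discard the coarse part of the test triple, and the strengthened Cauchy--Schwarz inequalities (with exact $L^2_\pi$-orthogonality removing the CBS factor in the parametric case). The only differences are presentational, e.g.\ phrasing the lower bound via dual norms on the detail spaces rather than substituting the estimators as test functions.
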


\begin{proof}
Let us prove~\eqref{errredin1},
the proof of~\eqref{errredin2} proceeds in the same way.
The enhanced approximation 
${\cal U}_{h^\ast,\Lambda} = (\bm{u}_{h^\ast,\Lambda},p_{h^\ast,\Lambda},\tilde{p}_{h^\ast,\Lambda})$
satisfies
\begin{subequations} \label{r1disver11enrich}
    \begin{align}
         a({\bm{u}_{h^\ast,\Lambda}},\bm{v})+b(\bm{v},{p_{h^\ast,\Lambda}})&=f(\bm{v})
         \quad \, \, \forall \bm{v}\in \bm{V}_{h^\ast,\Lambda},
         \label{r1cdisressp11}
    \\
         b({\bm{u}_{h^\ast,\Lambda}},q)-c({\tilde{p}_{h^\ast, \Lambda}},q)&=0 
         \quad \qquad \forall q\in  W_{h^\ast,\Lambda},
         \label{r1cdisressp12}
    \\
         -c({p_{h^\ast,\Lambda}},r)+d({\tilde{p}_{h^\ast, \Lambda}},r)&=0 
         \quad \qquad \forall r \in  W_{h^\ast,\Lambda}.
         \label{r1cdisressp13}
    \end{align}
\end{subequations}
Recalling the definitions of the residual functionals in~\eqref{resid} and
combining~\eqref{cdisressp}, (\ref{errredeq12}) and (\ref{r1disver11enrich}) implies
\begin{subequations} \label{r2disver11enrich}
\begin{align}\label{r2cdisressp11}
\bar{a}_0(\tilde{\bm{e}}_{h,\Lambda}^{\bm{u}},\bm{v})&=a(\bm{u}_{h^\ast,\Lambda}-\bm{u}_{h,\Lambda},\bm{v})+b(\bm{v},p_{h^\ast,\Lambda}-p_{h,\Lambda}) \quad\forall \bm{v}\in \widetilde{\bm{V}}_{h,\Lambda},\\
\bar{c}(\tilde{e}_{{h,\Lambda}}^p,q)&=b(\bm{u}_{h^\ast,\Lambda}-\bm{u}_{h,\Lambda},q)-c(\tilde{p}_{h^\ast,\Lambda}-\tilde{p}_{h,\Lambda},q) \,  \, \quad\forall q\in \widetilde{{W}}_{h,\Lambda},\label{r2cdisressp12}\\
\bar{d}_0(\tilde{e}_{h,\Lambda}^{\tilde{p}},r)&=-c(p_{h^\ast,\Lambda}-p_{h,\Lambda},r)+d(\tilde{p}_{h^\ast,\Lambda}-\tilde{p}_{h,\Lambda},r) \quad\forall r\in \widetilde{{W}}_{h,\Lambda}.\label{r2cdisressp13}
\end{align}
\end{subequations}
Substituting $\bm{v} \,{=}\, \tilde{\bm{e}}_{h,\Lambda}^{\bm{u}}$, $q \,{=}\, \tilde{e}_{h,\Lambda}^p$ and $r \,{=}\, \tilde{e}_{h,\Lambda}^{\tilde{p}}$ into (\ref{r2disver11enrich}) and using Lemma~\ref{tecre11}  leads to
\begin{subequations} \label{r3disver11enrich}
   \begin{align}
      |\tilde{\bm{e}}_{h,\Lambda}^{\bm{u}}|_{\bar{a}_0}^2 & \le
      \left(
              E_{\max} \, |\bm{u}_{h^\ast,\Lambda} - \bm{u}_{h,\Lambda}|_{\bar{a}_0} +
              \sqrt{d}\, |p_{h^\ast,\Lambda} - p_{h,\Lambda}|_{\bar{c}}
              %\alpha^{-\frac{1}{2}} \, \|p_{h^\ast,\Lambda} - p_{h,\Lambda}\|_{\mathcal{W}}
      \right)
      |\tilde{\bm{e}}_{h,\Lambda}^{\bm{u}}|_{\bar{a}_0},
      \label{r3cdisressp11}
   \\
      |\tilde{e}_{{h,\Lambda}}^p|_{\bar{c}}^2 & \le
      \left( \sqrt{d} \, |\bm{u}_{h^\ast,\Lambda} - \bm{u}_{h,\Lambda}|_{\bar{a}_0} +
      |\tilde{p}_{h^\ast,\Lambda}-\tilde{p}_{h,\Lambda}|_{\bar{d}_0} \right)
      |\tilde{e}_{h,\Lambda}^p|_{\bar{c}},
      \label{r3cdisressp12}
   \\
      |\tilde{e}_{h,\Lambda}^{\tilde{p}}|_{\bar{d}_0}^2 & \le
      \Big( |p_{h^\ast,\Lambda} - p_{h,\Lambda}|_{\bar{c}} +
      E_{\max} \, |\tilde{p}_{h^\ast,\Lambda} - \tilde{p}_{h,\Lambda}|_{\bar{d}_0} \Big)\,
      |\tilde{e}_{h,\Lambda}^{\tilde{p}}|_{\bar{d}_0}.
      \label{r3cdisressp13}
\end{align}
\end{subequations}
Combining all three estimates in (\ref{r3disver11enrich}) leads to the left-hand inequality in  (\ref{errredin1}). 

%\todo{TODO: check that this procedure indeed gives the constant $C_6$ in~\eqref{errredin1}!}

In order to prove the right-hand inequality in (\ref{errredin1}), 
{we need to use a discrete stability result which is analogous to the one given}
in Lemma~\ref{diswelpose11}. Since 
${\cal U}_{h^\ast,\Lambda}-{\cal U}_{h,\Lambda} \in
  \bm{V}_{h^\ast,\Lambda}\times {W}_{h^{\ast},\Lambda}\times {W}_{h^{\ast},\Lambda}$,
%$(\bm{u}_{h^\ast,\Lambda}-\bm{u}_{h,\Lambda}, p_{h^\ast,\Lambda}-p_{h,\Lambda},\tilde{p}_{h^\ast,\Lambda}-\tilde{p}_{h,\Lambda})\in
%  \bm{V}_{h^\ast,\Lambda}\times {W}_{h^{\ast},\Lambda}\times {W}_{h^{\ast},\Lambda}$, 
there exists
${\cal V} := (\bm{v},q,r) \in  \bm{V}_{h^\ast,\Lambda}\times {W}_{h^{\ast},\Lambda}\times {W}_{h^{\ast},\Lambda}$
with
\be \label{errred_est0_ab}
      |||{\cal V}||| {=} \big( |\bm{v}|_{\bar{a}_0}^2 + |q|_{\bar{c}}^2 + |r|_{\bar{d}_0}^2 \big)^{1/2}
      \le
      {\widehat{C}_2}  \, |||{\cal U}_{h^\ast,\Lambda}-{\cal U}_{h,\Lambda}|||
\ee
satisfying 
\begin{equation} \label{errred_est1_ab}
   {E}_{\min}\, \widehat{C}_1 \, ||| {\cal U}_{h^\ast,\Lambda}-{\cal U}_{h,\Lambda} |||^2 \le
   \mathcal{B}({\cal U}_{h^\ast,\Lambda}-{\cal U}_{h,\Lambda}; {\cal V}),
\end{equation}
{where $\widehat{C}_{1}$ and $\widehat{C}_{2}$ are defined analogously to the constants $C_{1}^{\ast}$ and $C_{2}^{\ast}$ in Lemma~\ref{diswelpose11} but with $\gamma_{h}$ replaced by the inf--sup constant $\gamma_{h}^{\ast}$ associated with the spaces $\bm{V}_{h}^{\ast}$ and $W_{h}^{\ast}$}. Recalling the definitions of $\bm{V}_{h^\ast,\Lambda}$ and ${W}_{h^{\ast},\Lambda}$ and the associated decompositions in~\eqref{enriched_spacesV} and~\eqref{enriched_spacesW},
we may decompose
${\cal V} \in \bm{V}_{h^\ast,\Lambda}\times {W}_{h^{\ast},\Lambda}\times {W}_{h^{\ast},\Lambda}$ as
${\cal V} = {\cal V}_{h,\Lambda}+\widetilde{\cal V}_{h,\Lambda}$, where
${\cal V}_{h,\Lambda} := (\bm{v}_{h,\Lambda}, q_{h,\Lambda}, r_{h,\Lambda}) \in
  \bm{V}_{h,\Lambda} \times {W}_{h,\Lambda} \times {W}_{h,\Lambda}$
and
$\widetilde{\cal V}_{h,\Lambda} := (\tilde{\bm{v}}_{h,\Lambda}, \tilde{q}_{h,\Lambda}, \tilde{r}_{h,\Lambda}) \in
  \widetilde{\bm{V}}_{h,\Lambda} \times \widetilde{W}_{h,\Lambda} \times \widetilde{W}_{h,\Lambda}$.
Since $\mathcal{B}({\cal U}_{h^\ast,\Lambda}-{\cal U}_{h,\Lambda}; {\cal V}_{h,\Lambda}) = 0$,
we use equations~\eqref{r2disver11enrich} and the Cauchy-Schwarz inequality to conclude from~\eqref{errred_est1_ab} that
%
%\begin{align}
%C_1|||(\bm{u}_{h^\ast,\Lambda}-\bm{u}_{h,\Lambda},&p_{h^\ast,\Lambda}-p_{h,\Lambda},\tilde{p}_{h^\ast,\Lambda}-\tilde{p}_{h,\Lambda})|||^2\nonumber\\
%&\le \mathcal{B}(\bm{u}_{h^\ast,\Lambda}-\bm{u}_{h,\Lambda},p_{h^\ast,\Lambda}-p_{h,\Lambda},\tilde{p}_{h^\ast,\Lambda}-\tilde{p}_{h,\Lambda}; \bm{v},q,\tilde{q}).
%\end{align}
%
%Since $(\bm{v},q,\tilde{q})\in \bm{V}_{h^\ast,\Lambda}\times {W}_{h^{\ast},\Lambda}\times {W}_{h^{\ast},\Lambda}=\bm{V}_h^\ast\otimes S_{\Lambda}\times W_h^\ast\otimes S_{\Lambda} \times W_h^\ast\otimes S_{\Lambda}$. More specifically, we can write $\bm{v}\in \bm{V}_h^\ast\otimes S_{\Lambda}=(\bm{V}_h\oplus\widetilde{\bm{V}}_{h}) \otimes S_{\Lambda}$ and $p,\tilde{p}\in W_h^\ast\otimes S_{\Lambda}=(W_h\oplus\widetilde{W}_{h}) \otimes S_{\Lambda}$. Then, we have  $\bm{v}=\bm{v}_{h,\Lambda}+\hat{\bm{v}}_{h,\Lambda}$, $q=q_{h,\Lambda}+\hat{q}_{h,\Lambda}$ and $\tilde{q}=\tilde{q}_{h,\Lambda}+\hat{\tilde{q}}_{h,\Lambda}$.
%By (\ref{errredeq11}), (\ref{errredeq12}) and (\ref{r1disver11enrich}), we have
%
\begin{align} \label{r1stab2}
   {E}_{\min}\, {\widehat{C}_1} \, ||| {\cal U}_{h^\ast,\Lambda}-{\cal U}_{h,\Lambda} |||^2 & \le
   \mathcal{B}({\cal U}_{h^\ast,\Lambda}-{\cal U}_{h,\Lambda}; \widetilde{\cal V}_{h,\Lambda})
   \nonumber
   \\
   & \!\!\!\reff{r2disver11enrich}{=}
%C_1|||(\bm{u}_{h^\ast,\Lambda}-\bm{u}_{h,\Lambda}, &
%p_{h^\ast,\Lambda}-p_{h,\Lambda},\tilde{p}_{h^\ast,\Lambda}-\tilde{p}_{h,\Lambda})|||^2\nonumber\\ &\le 
   \bar{a}_0(\tilde{\bm{e}}_{h,\Lambda}^{\bm{u}}, \tilde{\bm{v}}_{h,\Lambda}) +
   \bar{c}(\tilde{e}_{h,\Lambda}^p,\tilde{q}_{h,\Lambda}) +
   \bar{d}_0(\tilde{e}_{h,\Lambda}^{\tilde{p}},\tilde{r}_{h,\Lambda})\qquad\qquad\ \ 
   \nonumber
   \\
   & {\le}\; |\tilde{\bm{e}}_{h,\Lambda}^{\bm{u}}|_{\bar{a}_0} |\tilde{\bm{v}}_{h,\Lambda}|_{\bar{a}_0}  \,{+}\,
   |\tilde{e}_{h,\Lambda}^p|_{\bar{c}} |\tilde{q}_{h,\Lambda}|_{\bar{c}}  \,{+}\,
   |\tilde{e}_{h,\Lambda}^{\tilde{p}}|_{\bar{d}_0} |\tilde{r}_{h,\Lambda}|_{\bar{d}_0}.
\end{align}
We will now estimate $|\tilde{\bm{v}}_{h,\Lambda}|_{\bar{a}_0}$, $|\tilde{q}_{h,\Lambda}|_{\bar{c}}$,
and $|\tilde{r}_{h,\Lambda}|_{\bar{d}_0}$.
Using the strengthened Cauchy-Schwarz inequality in~\eqref{scbic11a}, we have
%\begin{subequations}
   \begin{align*}
      |\tilde{\bm{v}}_{h,\Lambda}|_{\bar{a}_0}^2 & =
      |\bm{v}|_{\bar{a}_0}^2 - 2\bar{a}_0(\bm{v}_{h,\Lambda},\tilde{\bm{v}}_{h,\Lambda}) -
      |\bm{v}_{h,\Lambda}|_{\bar{a}_0}^2 \le
      |\bm{v}|_{\bar{a}_0}^2 + 2 \gamma_1 |\bm{v}_{h,\Lambda}|_{\bar{a}_0} |\tilde{\bm{v}}_{h,\Lambda}|_{\bar{a}_0} -
      |\bm{v}_{h,\Lambda}|_{\bar{a}_0}^2
      \\[3pt]
      & \le
      |\bm{v}|_{\bar{a}_0}^2 + |\bm{v}_{h,\Lambda}|^2_{\bar{a}_0} + \gamma_1^2 |\tilde{\bm{v}}_{h,\Lambda}|^2_{\bar{a}_0} -
      |\bm{v}_{h,\Lambda}|_{\bar{a}_0}^2 =
      |\bm{v}|_{\bar{a}_0}^2 + \gamma_1^2 |\tilde{\bm{v}}_{h,\Lambda}|^2_{\bar{a}_0},
\end{align*}
%\end{subequations}
which yields
\begin{subequations}\label{r1bnd1}
\begin{equation}
   |\tilde{\bm{v}}_{h,\Lambda}|_{\bar{a}_0} \le (1-\gamma_1^2)^{-1/2} |\bm{v}|_{\bar{a}_0}.
\end{equation}
In the same way, using~\eqref{scbic11b}, we obtain
\begin{equation}
   |\tilde{{q}}_{h,\Lambda}|_{\bar{c}}  \le (1-\gamma_2^2)^{-1/2} |q|_{\bar{c}},\qquad
   |\tilde{r}_{h,\Lambda}|_{\bar{d}_0}  \le (1-\gamma_2^2)^{-1/2} |r|_{\bar{d}_0}.
\end{equation}
\end{subequations}
%
%and similarly, using~\eqref{scbic11b}, we obtain
%\begin{align*}
%   |\tilde{{q}}_{h,\Lambda}|_{\bar{c}}^2
%   % & = |{q}|_{\bar{c}}^2-2\bar{c}({q}_{h,\Lambda},{\tilde{q}}_{h,\Lambda}) -
%   %|{q}_{h,\Lambda}|_{\bar{c}}^2
%   \le |{q}|_{\bar{c}}^2 + \gamma_2^2|{\tilde{q}}_{h,\Lambda}|_{\bar{c}}^2,\qquad
%   |\tilde{r}_{h,\Lambda}|_{\bar{d}_0}^2
%   % & = |\tilde{q}|_{\bar{d}_0}^2-2\bar{d}_0(\tilde{q}_{h,\Lambda},{\tilde{\tilde{q}}}_{h,\Lambda}) -
%   %|\tilde{q}_{h,\Lambda}|_{\bar{d}_0}^2
%   \le |r|_{\bar{d}_0}^2+\gamma_2^2|{\tilde{r}}_{h,\Lambda}|_{\bar{d}_0}^2.
%\end{align*}
%Setting $\gamma = \max\{\gamma_1,\,\gamma_2\}$, the above three estimates yield
%\begin{subequations}\label{r1bnd1}
%\begin{align}
%   |\tilde{\bm{v}}_{h,\Lambda}|_{\bar{a}_0} &\le (1-\gamma^2)^{-1/2} |\bm{v}|_{\bar{a}_0},\\
%   |\tilde{{q}}_{h,\Lambda}|_{\bar{c}} &\le (1-\gamma^2)^{-1/2} |q|_{\bar{c}},\\
%   |\tilde{r}_{h,\Lambda}|_{\bar{d}_0} &\le (1-\gamma^2)^{-1/2} |r|_{\bar{d}_0}.
%\end{align}
%\end{subequations}
%
Combining (\ref{r1stab2}) and (\ref{r1bnd1}), using (\ref{errred_est0_ab}), and recalling
the definition of $\eta_{h^\ast,\Lambda}$ in~(\ref{err_red_est_x}) we finally prove the right-hand inequality in~(\ref{errredin1})
with $\gamma = \max\{\gamma_1,\,\gamma_2\}$. \end{proof}

%\todo{TODO: the constant $C_7^\ast$ that I've got in (\ref{errredin1}) is different from the one
%in the original draft. Please check that it is correct.}
%
%In same manner, we can prove the estimate (\ref{errredin2}).
%\end{proof}

% CP: REMOVING THIS. WHY IS IT NEEDED? 
%\begin{remark}
%From Lemma \ref{mainth2}, we can also conclude that there exists a $\Theta\in[0,1)$ such that
%\begin{align}
%\sqrt{1-\Theta^2} \,\eta_2\le |\tilde{e}_{h,\Lambda}^{p}|_{\bar{c}}\le \eta_2.
%\end{align}
%\end{remark}
%\todo{
%TODO: Do we need this remark? Why is it useful?
%I think, the constant in the lower bound should also depend on the CBS constant $\gamma_2$?
%}

%%%%%%%%%%%%%%%%%%%%%%%%%%%%%%%%%%%%%%%%%%%%%%%%%%%%%%%%%%%%%%%%%%%%%%%%%%%%%%%%%%%%%%%%%%
\subsection{The parametric error estimators}
\label{sec:param:estimators}

In this subsection, we discuss some important properties of the \emph{parametric} error estimators {$\bm{e}_{h,\mathfrak{Q}}^{\bm{u}}$ and $e_{h,\mathfrak{Q}}^{\tilde{p}}$. Recall that these contribute to $\eta_{1}$ and $\eta_{3}$ and hence the total error estimate $\eta$ as well as the error reduction proxy $\eta_{h,\mathfrak{Q}}$.}

Let $\mathfrak{Q}$ be any finite detail index set, i.e., let
$\mathfrak{Q} = \{\bm{\alpha} \in \mathfrak{I};\; \bm{\alpha} \notin \Lambda\}$.
%
%First, we note 
Since the subspaces $\bm{V}_h \otimes \textrm{span}\{\psi_{\bm{\alpha}}\}$ with $\bm{\alpha} \in \mathfrak{Q}$
are pairwise orthogonal with respect to the inner product $\bar{a}_0(\cdot,\cdot)$, the parametric error estimator
$\euh{\GQ}$ defined in \eqref{disrespar11} can be decomposed into {separate} contributions associated with {the} individual multi-indices $\balpha \in \GQ$ as follows:
\be \label{param:estimator:sum}
      \euh{\GQ} = \sum\limits_{\balpha \in \GQ} \euh{\balpha}\quad
      \hbox{with}\quad
      |\euh{\GQ}|_{\bar{a}_0}^2 = \sum\limits_{\balpha \in \GQ} |\euh{\balpha}|_{\bar{a}_0}^2,
\ee
where, for each $\balpha \in \GQ$, the estimator
$\euh{\balpha} \in \bm{V}_h \otimes \textrm{span}\{\psi_{\bm{\alpha}}\}$ satisfies
\be \label{param:estimator:def}
      \bar{a}_0(\euh{\balpha}, \bm{v}_h \psi_{\balpha}) = \mathcal{R}^{\bm{u}}(\bm{v}_h \psi_{\balpha})\quad
      \forall\, \bm{v}_h \in \bm{V}_h.
\ee
{Hence to compute $|\euh{\GQ}|_{\bar{a}_0}^2$ we simply solve a set of decoupled Poisson problems associated with the finite element space $\bm{V}_{h}$. A similar decomposition holds for the {parametric} error estimator $\etph{\GQ}$ defined in \eqref{disrespar13}, and we will denote the individual estimators associated with each multi-index $\balpha\,{\in}\,\GQ$ by $\etph{\balpha} \,{\in}\, W_h \,{\otimes}\, \textrm{span}\{\psi_{\bm{\alpha}}\}$.} 

{Thanks to Theorem~\ref{mainth4}, we see that} the quantity $\eta_{h,\balpha} := \big(|\euh{\balpha}|_{\bar{a}_0}^2 + |\etph{\balpha}|_{\bar{d}_0}^2\big)^{1/2}$
(cf.~\eqref{err_red_est_y}) {can be used to estimate} the error reduction that would be achieved
by adding only \emph{one} multi-index $\balpha \in \GI \setminus \Lambda$
to the current index set $\Lambda$ and computing the corresponding enhanced approximation
${\cal U}_{h,\Lambda^\ast}$ with $\Lambda^\ast = \Lambda \cup {\{\balpha\}}$.
% \in \bm{V}_{h,\Lambda^\ast}\times W_{h,\Lambda^\ast}\times W_{h,\Lambda^\ast}$

An important aspect of our error estimation strategy (and a key ingredient of the adaptive algorithm presented below) is the choice of the detail index set $\GQ$. Let $\bm{t}^{(n)} = (t^{(n)}_1,t^{(n)}_2,\ldots) \in\mathfrak{I}$ be the Kronecker delta index
for the coordinate $n\in \mathbb{N}$, i.e.,  $t^{(n)}_j= \delta_{jn}$ for any $j\in \mathbb{N}$.
Next, for any finite index set $\Lambda$, we define $\Lambda_{\infty}^\ast$
as the infinite index set given by $\Lambda_{\infty}^\ast=\Lambda\cup\mathfrak{Q}_{\infty}$, where
\be \label{Qinfty}
      \mathfrak{Q}_{\infty} :=
      \{\bm{\alpha}\in\mathfrak{I}\setminus\Lambda; \;
      \bm{\alpha}=\bm{\tau} \pm \bm{t}^{(n)}\;\ \forall\,\bm{\tau}\in\Lambda,\ \forall\, n=1,2,\ldots\}
\ee
denotes the boundary of $\Lambda$.
The following result follows from Co\-rol\-la\-ry~4.3 in \cite{bs16}.

%where it was proved for the general case of parametric operator~equations.

\smallskip

\begin{lemma} \label{parestlem2}
Let the detail index set $\mathfrak{Q}$ be a finite subset of the index set
$\mathfrak{I}\setminus \Lambda^\ast_{\infty}$. % and let $f$ be the parametric linear functional.
Then the parametric error estimators $\euh{\GQ}$ and $\etph{\GQ}$ are identically equal to zero.
\end{lemma}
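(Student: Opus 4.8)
The plan is to show that every defining residual $\mathcal{R}^{\bm{u}}(\bm{v}_h\psi_{\bm{\alpha}})$ (and, analogously, $\mathcal{R}^{\tilde{p}}(\tilde{q}_h\psi_{\bm{\alpha}})$) vanishes whenever $\bm{\alpha}\in\mathfrak{I}\setminus\Lambda^\ast_\infty$, so that the unique solution of~\eqref{param:estimator:def} is the zero function, whence $\euh{\bm{\alpha}}=0$ and $\etph{\bm{\alpha}}=0$ for each such $\bm{\alpha}$; summing over $\bm{\alpha}\in\mathfrak{Q}\subset\mathfrak{I}\setminus\Lambda^\ast_\infty$ via~\eqref{param:estimator:sum} then gives the claim. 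First I would write out $\mathcal{R}^{\bm{u}}(\bm{v}_h\psi_{\bm{\alpha}})=f(\bm{v}_h\psi_{\bm{\alpha}})-a(\bm{u}_{h,\Lambda},\bm{v}_h\psi_{\bm{\alpha}})-b(\bm{v}_h\psi_{\bm{\alpha}},p_{h,\Lambda})$ using the definitions~\eqref{resid1}, \eqref{fbf_a}, \eqref{fbf_b}, \eqref{fbf_f}, and expand everything in the Legendre basis, writing $\bm{u}_{h,\Lambda}=\sum_{\bm{\tau}\in\Lambda}\bm{u}_{\bm{\tau}}\psi_{\bm{\tau}}$ and likewise for $p_{h,\Lambda}$.

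The key structural observation is that $a(\cdot,\cdot)$ is the only form containing the affine field $E(\bm{x},\bm{y})=e_0(\bm{x})+\sum_k e_k(\bm{x})y_k$; all the other terms ($f$, $b$) are parameter-independent in their integrands, so their $\bm{y}$-integrals pick out only $\psi_0$-components and vanish against $\psi_{\bm{\alpha}}$ for $\bm{\alpha}\ne\bm{0}$ (and $\bm{0}\in\Lambda$, hence $\bm{0}\notin\mathfrak{Q}$). For the term $a(\bm{u}_{h,\Lambda},\bm{v}_h\psi_{\bm{\alpha}})$, multiplication by $y_k$ acts on Legendre polynomials via the three-term recurrence $y_k\psi_{\bm{\tau}}=c_{\tau_k}^-\psi_{\bm{\tau}-\bm{t}^{(k)}}+c_{\tau_k}^+\psi_{\bm{\tau}+\bm{t}^{(k)}}$, so the parametric support of $E\,\bm{\varepsilon}(\bm{u}_{h,\Lambda})$ lies in $\Lambda\cup\{\bm{\tau}\pm\bm{t}^{(k)}:\bm{\tau}\in\Lambda,\ k\in\mathbb{N}\}=\Lambda\cup\mathfrak{Q}_\infty=\Lambda^\ast_\infty$ (using~\eqref{Qinfty}). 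Hence $\int_\Gamma E\,\bm{\varepsilon}(\bm{u}_{h,\Lambda})\psi_{\bm{\alpha}}\,d\pi=0$ for any $\bm{\alpha}\notin\Lambda^\ast_\infty$ by orthonormality of $\{\psi_{\bm{\alpha}}\}$, so $a(\bm{u}_{h,\Lambda},\bm{v}_h\psi_{\bm{\alpha}})=0$ as well. Combining, $\mathcal{R}^{\bm{u}}(\bm{v}_h\psi_{\bm{\alpha}})=0$ for all $\bm{v}_h\in\bm{V}_h$ and all $\bm{\alpha}\in\mathfrak{I}\setminus\Lambda^\ast_\infty$, so $\euh{\bm{\alpha}}=0$ by uniqueness. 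The identical argument applied to $\mathcal{R}^{\tilde{p}}(\tilde{q}_h\psi_{\bm{\alpha}})=c(p_{h,\Lambda},\tilde{q}_h\psi_{\bm{\alpha}})-d(\tilde{p}_{h,\Lambda},\tilde{q}_h\psi_{\bm{\alpha}})$ works, since $c(\cdot,\cdot)$ is parameter-free (contributing nothing for $\bm{\alpha}\ne\bm{0}$) and $d(\cdot,\cdot)$ again carries the affine factor $E$, so $d(\tilde{p}_{h,\Lambda},\tilde{q}_h\psi_{\bm{\alpha}})=0$ for $\bm{\alpha}\notin\Lambda^\ast_\infty$; thus $\etph{\bm{\alpha}}=0$.

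Since this is essentially the statement of~\cite[Corollary~4.3]{bs16} transcribed to the present three-field setting, the cleanest route is to cite that result directly for the $\mathcal{R}^{\bm{u}}$-component and remark that the $\mathcal{R}^{\tilde{p}}$-component follows by the same reasoning, because the relevant parametric structure — one affine field $E$ multiplying the unknown, all other forms parameter-independent — is identical. The only mild subtlety, which I would flag explicitly, is that $\mathfrak{Q}_\infty$ in~\eqref{Qinfty} is defined using \emph{both} signs $\bm{\tau}\pm\bm{t}^{(n)}$, matching exactly the two terms produced by the three-term recurrence, and that one must note $\bm{\tau}-\bm{t}^{(n)}$ is only a valid multi-index when $\tau_n\ge 1$ (when $\tau_n=0$ the coefficient $c^-_{\tau_n}$ vanishes, so no spurious index is generated) — so the parametric support of $E\,\bm{\varepsilon}(\bm{u}_{h,\Lambda})$ is genuinely contained in $\Lambda^\ast_\infty$ with no leakage, and the orthogonality argument goes through. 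No real obstacle is anticipated; the work is bookkeeping with the three-term recurrence and the definition of the boundary index set.
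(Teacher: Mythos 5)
Your argument is correct and is essentially the paper's own route: the paper simply invokes \cite[Corollary~4.3]{bs16}, whose underlying mechanism is exactly what you spell out — the affine form of $E$ together with the Legendre three-term recurrence confines the parametric support of $\mathcal{R}^{\bm{u}}$ and $\mathcal{R}^{\tilde{p}}$ (tested with $\bm{v}_h\psi_{\bm{\alpha}}$, $\tilde q_h\psi_{\bm{\alpha}}$) to $\Lambda\cup\mathfrak{Q}_{\infty}=\Lambda^\ast_{\infty}$, so for $\bm{\alpha}\in\mathfrak{Q}\subset\mathfrak{I}\setminus\Lambda^\ast_{\infty}$ the right-hand sides of the defining problems vanish and the estimators are zero. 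One minor wording slip worth fixing: the $b$-, $c$- (and $f$-) contributions vanish because $\bm{\alpha}\notin\Lambda$ (the parametric support of $p_{h,\Lambda}$, $\tilde p_{h,\Lambda}$), not merely because $\bm{\alpha}\neq\bm{0}$; since $\Lambda\subset\Lambda^\ast_{\infty}$ this does not affect your conclusion.
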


%\smallskip

{Hence, nonzero contributions to $\euh{\GQ}$ and $\etph{\GQ}$ are associated only with the indices from the boundary index set $\mathfrak{Q}_{\infty}$.
This has two consequences. First, for the error estimation to be effective, $\GQ$ should be chosen as a sufficiently large (finite) subset of $\GQ_\infty$. Second, for the adaptive algorithm to be efficient, the index set $\Lambda$ should only be enriched at each step with multi-indices from the index set $\GQ_\infty$.}

\subsection{\rbl{A rudimentary} adaptive algorithm} \label{sec:adapt:alg}
%We are now ready to present a 
%adaptive algorithm for the numerical solution of the parametric linear elasticity problem~\eqref{os3}. 
\rbl{As is conventional, our adaptive approach is to  solve the SG-MFEM problem~\eqref{disver11} and estimate the energy error by computing $\eta$ in~\eqref{mainest11}. The approximation is then refined (by adaptively enriching the underlying approximation spaces) until either $\eta < \hbox{\tt tol}$, 
where {\tt tol} is a user-prescribed tolerance, or else the total number of degrees
of freedom in the discrete problem exceeds some specified maximum value}.
Let us first focus on the computation of $\eta = (\eta_1 + \eta_2 +\eta_3)^{1/2}$, % \eqref{mainest11}
where $\eta_1,\ \eta_2,\ \eta_3$ are defined in~(\ref{err:estimates}).
Recall that $\eta_2$ is calculated directly using~\eqref{e_0_p_def}.
To compute the spatial and parametric error estimators that contribute to $\eta_1$, $\eta_3$
(see~\eqref{eta1_Def},~\eqref{eta3_Def}), one needs to specify
the detail finite element spaces $\widetilde{\bm{V}}_h$, $\widetilde{W}_h$
and the detail polynomial space $S_\GQ$ on the parameter domain~$\Gamma$.
% index set $\GQ$.
In our algorithm, $\widetilde{\bm{V}}_h$ and $\widetilde{W}_h$ will span local bubble functions,
for which we have two alternatives: {(\rbl{option I})} piecewise polynomials of the same order as $\bm{V}_h$ and $W_h$, respectively on a uniformly refined mesh~${\mathcal T}_{h/2}$; and {(option II)} piecewise polynomials of a higher order on the mesh~${\mathcal T}_{h}$. In {both cases}, {the computation of the spatial estimators} $\tilde{\bm{e}}_{h,\Lambda}^{\bm{u}}$ and $\tilde{e}_{h,\Lambda}^{\tilde{p}}$ {is broken down over the elements} $K \in {\mathcal T}_h$ using
the standard element residual technique (see, e.g.,~\cite{MJ}).
\rbl{One should view the approach that we have adopted  as a  proof of concept.
Greater efficiency could almost certainly be achieved using  a more sophisticated refinement strategy, 
such as the multilevel strategy  developed in~\cite{cpb18}, where
distinct solution modes are associated with a finite element space on a different mesh.}

The construction of the detail index set $\GQ$ {(that defines  $S_\GQ$)} is motivated by Lemma~\ref{parestlem2}.
Specifically, we will use the following finite subset of the index set $\GQ_{\infty}$ defined in~\eqref{Qinfty}:
\be \label{indsetQ}
   \mathfrak{Q} = \left\{ 
                           \balpha \in \mathfrak{I} \setminus \Lambda;\;
                           \balpha= \bm{\tau} \pm \bm{t}^{n}\;\
                           \forall \bm{\tau} \in \Lambda,\;\ \forall n = 1,2,\ldots, {M_{\Lambda} +1}
                           \right\},
\ee
where $M_\Lambda \in \mathbb{N}$ {(the number of active parameters)} is defined as 
\begin{equation*}
  M_\Lambda := 
  \begin{cases}
           0        &  \text{if $\Lambda = \{ (0,0,\dots) \} $}, \\
           \max \big\{ \max (\supp (\balpha) ) ;\; \balpha \in \Lambda \setminus \{ (0,0,\dots) \} \big\} & \text{otherwise}.
  \end{cases}
\end{equation*}
%
%In adaptive algorithm, local bubble functions are defined by
%$\widetilde{\bm{Q}}(h)=\bm{Q}_{\bm{u}}\times\bm{Q}_{p}\times\bm{Q}_{\tilde{p}}$.
%In our algorithm, there are two choices of $\widetilde{\bm{Q}}(h)$.
%Choice I of $\widetilde{\bm{Q}}(h)$ is based on piecewise polynomials on refined mesh $\Delta_{h/2}$.
%Choice II is based on higher order polynomials. 
%
%Next we define the detail polynomial space $S_{\mathfrak{Q}}$ with index set
%
%  \begin{align}
%  {M}_{\Lambda} :=\left\{\begin{array}{cc}
%                                                 0 & if \Lambda=0,\\
%                                                 \max\{\max(\supp \bm{\alpha}); \bm{\alpha}\in \Lambda\setminus \{0\}\}&otherwise.                                                 \end{array}\right.
%   \end{align}
%
Given the index set $\GQ$ in~\eqref{indsetQ},  the parametric error estimators ${\bm{e}}_{h,\mathfrak{Q}}^{\bm{u}}$ and ${e}_{h,\mathfrak{Q}}^{\tilde{p}}$
contributing to $\eta_1$, $\eta_3$ are computed from the corresponding individual error estimators
${\bm{e}}_{h,{\balpha}}^{\bm{u}}$ and ${e}_{h,\balpha}^{\tilde{p}}$ for each $\balpha\in\mathfrak{Q}$
as explained in Section~\ref{sec:param:estimators}
(see, e.g.,~\eqref{param:estimator:sum}--\eqref{param:estimator:def}).

%Finally, we calculated the total error estimator $\eta$ as 
%  \begin{align}
%  \eta =(\eta_{h^\ast,\Lambda}^2+\sum_{\tau\in\mathfrak{Q}}\eta_{h,\mathfrak{Q}_{\tau}}^2)^{1/2},
%  \end{align}
%  where 
%  \begin{align}
%  \eta_{h^\ast,\Lambda}&= (|\tilde{\bm{e}}_{h,\Lambda}^{\bm{u}}|_{\bar{a}_0}^2+|e^p_0|_{\bar{c}}^2+|\tilde{e}_{{h,\Lambda}}^{\tilde{p}}|_{\bar{d}_0}^2)^{1/2},\\
%\eta_{h,\mathfrak{Q}_{\tau}}&= (|\bm{e}_{h,\mathfrak{Q}_{\tau}}|_{\bar{a}_0}^2+|{e}_{h,\mathfrak{Q}_{\tau}}^{\tilde{p}}|_{\bar{d}_0}^2)^{1/2}.
%  \end{align}

Let us now describe the refinement procedure. {If the estimated error $\eta$ is too large then}, in order to compute a more accurate approximation,
one needs to enrich the finite-dimensional subspaces
$\bm{V}_{h,\Lambda} = \bm{V}_{h} \otimes S_{\Lambda}$ and $W_{h,\Lambda} = W_{h} \otimes S_{\Lambda}$.
Recall that the quantities (see~\eqref{err_red_est})
\be \label{err:reductions}
        \eta_{h^\ast,\Lambda} := 
        \big(|\tilde{\bm{e}}_{h,\Lambda}^{\bm{u}}|_{\bar{a}_0}^2 + |\tilde{e}^{p}_{h,\Lambda}|_{\bar{c}}^{2} +
        |\tilde{e}_{{h,\Lambda}}^{\tilde{p}}|_{\bar{d}_0}^2\big)^{1/2}\ \ \hbox{and}\ \
        \eta_{h,\balpha} :=
        \big(|\bm{e}_{h,\balpha}^{\bm{u}}|_{\bar{a}_0}^2 +
        |{e}_{h,\balpha}^{\tilde{p}}|_{\bar{d}_0}^2\big)^{1/2} 
\ee
(for $\balpha \in \GQ$) provide proxies for the potential error reductions associated with spatial and parametric enrichment, respectively
(in the latter case, the enrichment is associated with adding {a single} index $\balpha \in \GQ$); see Theorem~\ref{mainth4}. We use the 
dominant proxy to guide the enrichment of $\bm{V}_{h,\Lambda}$ and $W_{h,\Lambda}$. 
More precisely, if $\eta_{h^\ast,\Lambda} \ge  \tau \,  \eta_{h,\mathfrak{Q}}$ with a refinement weighting factor $\tau \geq 1$, 
then the finite element spaces $\bm{V}_{h}$ and $W_{h}$ are enriched
\rbl{by refining the finite element mesh on $D$};
otherwise, the polynomial space $S_\Lambda$ on $\Gamma$ is enriched by adding at least one new index to the set $\Lambda$. In the latter case, we enrich $\Lambda$ with the index $\balpha \in \GQ$ corresponding to the largest of the proxies $\eta_{h,\balpha}$ as well as any additional indices $\balpha$ for which $\eta_{h,\balpha} \ge  \eta_{h^\ast,\Lambda}$.   %\rblf{Here, we fix $\tau=\sqrt{2}$}.

% NO WE DON'T! 
%we follow the enrichment strategy  proposed in~\cite[Section~5]{bs16}, namely, 

Our adaptive strategy is presented in Algorithm~\ref{algor_1}. Starting with an inf-sup stable pair $\bm{V}_{h_0}\!$--$\, W_{h_0}$ of finite element spaces
on a coarse mesh ${\cal T}_{h_0}$ and with an initial index set $\Lambda_0$
(typically, $\Lambda = \{(0,0,0,\ldots)\}$ or $\Lambda = \{(0,0,0,\ldots),\; (1,0,0,\ldots)\})$, the algorithm
generates two sequences of finite element spaces
\[
   \bm{V}_{h_0} \subseteq \bm{V}_{h_1} % \subseteq \bm{V}_{h_2} 
   \subseteq \ldots \subseteq 
   \bm{V}_{h_K} \subset \bm{H}^1_{E_0}(D)
   \quad
   \hbox{and}\quad
   W_{h_0} \subseteq W_{h_1} % \subseteq W_{h_2} 
   \subseteq \ldots \subseteq 
   W_{h_K} \subset L^2(D),
\]
a sequence of index sets $\Lambda_0 \subseteq \Lambda_1 \subseteq \Lambda_2 \ldots \subseteq \Lambda_K \subset \mathfrak{I}$,
as well as a sequence of SG-MFEM approximations
 $(\bm{u}_{k},p_{k},\tilde{p}_k) \in \bm{V}_{h_k,\Lambda_k} \times W_{h_k,\Lambda_k}\times W_{h_k,\Lambda_k}$
and the corresponding error estimates $\eta^{(k)}$ ($k \,{=}\, 0,1,\ldots,K$). The refinement weighting factor is chosen to be $\tau=\sqrt{2}$ and the algorithm is terminated when the estimated error is sufficiently small.
%$\rblf{\eta^{(K)}} \,{<}\, {\tt tol}$. 

\begin{figure}[!ht] 
\begin{algorithm} \label{algor_1}
{\tt Adaptive$\_\,${SG-MFEM}}$\,\big[{\tt tol},\mathcal{B},f, {h_0, \Lambda_0} \big] \rightarrow 
                                                    \big[(\bm{u}_K,p_K,\tilde{p}_K), \eta^{(K)}\big]$\\
%\moveright35pt\vbox
{\tt
%\noindent{\bf input} $h_0,\;\Lambda_0$ 
{\bf for} ${{k}} = 0,1,2,\ldots$ {\bf do}\\[3pt]
%\phantom{+}\quad\;$M_{n} \;{\leftarrow}\; \max \{\supp(\nu);\; \nu \in {\GP}_{n}\}$\\[3pt]
\phantom{+}\quad\;$(\bm{u}_{{{k}}},p_k,\tilde{p}_k)\;{\leftarrow}\;
                  \hbox{Solve}\big[\mathcal{B},f,\bm{V}_{h_{k}},W_{h_{k}},{\Lambda}_{{k}}\big]$\\[3pt]
\phantom{+}\quad\;$\left[\delta_{h}, \eta_{h^\ast,\Lambda}\right]\;{\leftarrow}\;
                     \hbox{Error$\_\,$Estimate$\_\,$1}
                     \big[\mathcal{B},f,\bm{u}_{{{k}}}, p_k, \tilde{p}_k, 
                     \widetilde{\bm{V}}_{h_{k}}, \widetilde{W}_{h_{k}} \big]$\\[3pt]
\phantom{+}\quad\;${\GQ}_{{{k}}} \;{\leftarrow}\;
                  \hbox{Detail$\_\,$Index$\_\,$Set}\big[{\Lambda}_{{{k}}}\big]$\\[3pt]
\phantom{+}\quad\;{\bf for} {$i = 1,2,\ldots,\#({\GQ}_{{k}})$} {\bf do}\\[3pt]
\phantom{+}\qquad\ \ $\eta_{h,i}\;{\leftarrow}\;
                     \hbox{Error$\_\,$Estimate$\_\,$2}\big[\mathcal{B},f,\bm{u}_k, p_k, \tilde{p}_k, 
                     \balpha_i \big]$\\[3pt]
\phantom{+}\quad\;{\bf end}\\[1pt]
\phantom{+}\quad $ \eta_{h,\mathfrak{Q}} :=  \left(\sum_{i=1}^{\#({\GQ}_{{k}})} \eta_{h,i}^2\right)^{1/2} $\\[3pt]  
\phantom{+}\quad\;$\eta^{(k)} := \big(\delta_{h}^2 +   \eta_{h,\mathfrak{Q}}^{2}   \big)^{1/2}$\\[3pt]
\phantom{+}\quad\;{\bf if} $\eta^{(k)} < {\tt tol}$ {{\bf then} $K:=k$, {\bf break}}\\[3pt]
\phantom{+}\quad\;{\bf if} $\eta_{h^\ast,\Lambda} \ge  \sqrt{2} \,   \eta_{h,\mathfrak{Q}} $
%\max\big\{\eta_{h,i};\, i=1,2,\ldots,\#({\GQ}_{{k}})\big\}$
                  {\bf then}\\[3pt]
\phantom{+}\quad\ \ \;$\bm{V}_{h_{k+1}} := \rbl{\bm{V}_{h_k^*}},\ W_{h_{k+1}} := \rbl{W_{h_k^*}},\
                                       {\Lambda}_{{{k}}+1} := {\Lambda}_{{{k}}}$\\ [3pt]
\phantom{+}\quad\;{\bf else}
                  $\bm{V}_{h_{k+1}} := \bm{V}_{h_k},\ W_{h_{k+1}} := W_{h_k}$\\ [3pt] 
\phantom{+}\quad  \qquad \, ${\Lambda}_{{{k}}+1} := {\Lambda}_{{k}} \cup
                              \big\{\balpha_{j} \in  {\GQ}_{{k}}; \eta_{h,j} := \max_{i} \eta_{h,i} \big\}  \cup \big\{\balpha_i \in {\GQ}_{{k}};\; \eta_{h,i} \, \ge \, \eta_{h^\ast,\Lambda}\big\}$ \\ [1pt]
{\bf end}}
\end{algorithm}
\end{figure}
\newpage 
%\tracingpages=1
\vspace*{-20pt}
The algorithm has  \rbl{four} functional building blocks:
\begin{itemize}
\item
{\tt Solve}$\big[\mathcal{B},f,\bm{V}_h,W_h,\Lambda\big]$: a subroutine that generates the % mixed approximation
SG-MFEM approximation
$({\bm{u}_{h,\Lambda}}, {p_{h,\Lambda}}, {\tilde{p}_{h, \Lambda}})
  \in \bm{V}_{h, \Lambda} \times W_{h,\Lambda} \times W_{h,\Lambda}$ satisfying~\eqref{disver11};
\item
{\tt Detail$\_\,$Index$\_\,$Set}$\big[\Lambda\big]$: a subroutine that generates
the {detail} index set $\mathfrak{Q}$ for the given index set $\Lambda$ (see~\eqref{indsetQ});
\item
{\tt Error$\_\,$Estimate$\_\,$1}
$\big[\mathcal{B},f,\bm{u}_{h,\Lambda}, p_{h,\Lambda}, \tilde{p}_{h,\Lambda},
  \widetilde{\bm{V}}_{h}, \widetilde{W}_{h}\big]$: a subroutine that computes
$\delta_h := 
  ( |\tilde{\bm{e}}_{h,\Lambda}^{\bm{u}}|^2_{\bar a_0} +
  |{e}_{0}^{p}|^2_{\bar c} +
  |\tilde{e}_{h,\Lambda}^{\tilde{p}}|^2_{\bar d_0} )^{1/2},
$
the contribution to $\eta$ associated with spatial enrichment, and the spatial error reduction proxy $\eta_{h^\ast,\Lambda}$ in~\eqref{err:reductions};
\item
{\tt Error$\_\,$Estimate$\_\,$2}
$\big[\mathcal{B},f,\bm{u}_{h,\Lambda}, p_{h,\Lambda}, \tilde{p}_{h,\Lambda},\balpha\big]$: a subroutine that computes the contribution to $\eta$ and parametric error reduction proxy 
associated with a single index $\balpha \in \mathfrak{Q}$ (see~\eqref{err:reductions}).
\end{itemize}

\smallskip

The IFISS software~\cite{ifiss} that we use to test the efficiency of our methodology is limited to two-dimensional spatial approximation. It
provides two alternative choices for the {\it spatial refinement} step in Algorithm~5.1
(that is,  the generation of the spaces $\bm{V}_{h_k^*}$ and $W_{h_k^*}$) that is taken 
 whenever the spatial refinement proxy $\eta_{h^\ast,\Lambda}$ 
dominates the \rbl{parametric  refinement proxy $ \eta_{h,\mathfrak{Q}}$}.
In cases where the solution is
spatially smooth, a natural option is to define  $h_k^*$  by taking a {\it uniform refinement}
of the current grid.  In  the computational experiments discussed later,  this 
refinement option is associated with a rectangular subdivision of  the spatial domain.\footnote{Specifically, 
we always use uniform refinement in combination with  the inf--sup stable approximation pairs
$\bm{V}_{h}$--$\, W_{h}$ that are built into the S-IFISS toolbox~\cite{SIFISS}.} 
On the other hand, when solving  spatially singular problems, it is more natural to 
define  $h_k^*$ by a {\it local refinement strategy} in combination with triangular approximation.
In our TIFISS toolbox implementation~\cite{TIFISS} this is done using a standard
 iterative refinement loop
\begin{align*}
\mbox{Solve}\rightarrow\mbox{Estimate}\rightarrow\mbox{Mark}\rightarrow\mbox{Refine}
\end{align*} 
combined with a  bulk parameter marking procedure with marking parameter $\theta=1/2$.
%\pagebreak 

\noindent A complete description of the strategy we are using can be found in~\cite{bespalovrocchi}. 

%Specifically, using a bulk parameter $\theta=1/2$, we 
%determine a minimal subset $\mathcal{M}_\ell$  of marked triangles such 
%that $\sum_{K\in {\mathcal{M}}_{\ell}} \eta_{K}^{2} \ge \theta \eta_{h^\ast,\Lambda}$ 

%%%%%%%%%%%%%%%%%%%%%%%%%%%%%%%%%%%%%%%%%%%%%%%%%%%%%%%%%%%%%%%%%%%%%%%%%%%%%%%%%%%%%%%%%%
\section{Incompressible limit case} \label{sec6}
If $\nu=\frac{1}{2}$, then our three-field formulation (\ref{os3}) reduces to the following two-field formulation
representing the Stokes problem:
find $\bm{u}: D\times\Gamma \to \rrb{\mathbb{R}^{d}}$ and $p: D\times\Gamma \to \mathbb{R}$ such that
\begin{subequations}\label{Stokes}
\begin{align}\label{Stokes1}
 -\nabla\cdot\sigmab(\bm{x},\bm{y}) & =\bm{f}(\bm{x}), \quad \bm{x}\in D, \;\bm{y}\in\Gamma, \\
 \nabla\cdot\bm{u}(\bm{x},\bm{y}) & = 0,\quad \quad \, \, \, \bm{x}\in D, \;\bm{y}\in\Gamma, \label{Stokes2}\\
 \bm{u}(\bm{x},\bm{y})&=\bm{g}(\bm{x}),\quad \bm{x} \in \partial D_D,\;\bm{y}\in\Gamma,\\
 \sigmab(\bm{x},\bm{y})\, \bm{n}&= \mathbf{0},\quad \quad \, \, \, \bm{x} \in {\partial D_N},\;\bm{y}\in\Gamma.
\end{align}      
\end{subequations}
Assuming that $\bm{f}\in ( L^{2}(D))^{{d}}$ and $\bm{g}= \bm{0}$ on $\partial D_D$,
the weak formulation of~\eqref{Stokes} and the associated SG-MFEM formulation follow
from~\eqref{scm11a} and~\eqref{disver11}, respectively,
by formally setting the bilinear forms $c(\cdot,\cdot)$ and $d(\cdot,\cdot)$ to zero and
omitting the third components of the weak and Galerkin solutions.
\rbl{In the incompressible limit}, the error estimate $\eta$ defined in~\eqref{mainest11}~becomes
$
   \eta = (\eta_1^2+\eta_2^2)^{1/2},
$
where $\eta_1$ and $\eta_2$ are given in~(\ref{err:estimates}) (with $\eta_2 = {\alpha}^{1/2} ||  \nabla \cdot \bm{u}_{h,\Lambda}||_{\mathcal{W}}$).
%
% \begin{align}
% \eta_1:=|\tilde{\bm{e}}^{\bm{u}}_{h,\Lambda}+\bm{e}^{\bm{u}}_{h,\mathfrak{Q}}|_{\bar{a}_0},\quad \eta_2:=||{\rm{div}} \,{\bm{u}_{h,\Lambda}}||_{\mathcal{W}}.
%\end{align}
%The spatial error estimator $\tilde{\bm{e}}^{\bm{u}}_{h,\Lambda}$ satisfies
%\begin{align}\label{ilceq1}
%\bar{a}_0(\tilde{\bm{e}}^{\bm{u}}_{h,\Lambda}, \bm{v})&=\mathcal{R}^{\bm{u}}(\bm{v})\quad \forall \bm{v}\in\widetilde{\bm{V}}_{h,\Lambda}
%\end{align}
%and the parametric error estimator $\bm{e}_{h,\mathfrak{Q}}^{\bm{u}}$ satisfies
%\begin{align}\label{ilceq2}
%\bar{a}_0(\bm{e}^{\bm{u}}_{h,\mathfrak{Q}},\bm{v})&= \mathcal{R}^{\bm{u}}(\bm{v})\quad\forall \bm{v}\in\bm{V}_{h,\mathfrak{Q}}.
%\end{align}
%
The following result is an immediate consequence of Theorem~\ref{mainth3}.

\begin{theorem}\label{mainth5}
Let $(\bm{e}^{\bm{u}},e^p)$ be the error in the SG-MFEM approximation of the {weak} 
solution to~\eqref{Stokes}.
Then
\begin{align}
{C_6} \, \eta \le |||(\bm{e}^{\bm{u}},e^p)|||_{S}\le \frac{C_7}{\sqrt{1-\gamma^2}\sqrt{1-\Theta^2}} \eta,
\end{align} 
\rbl{where} 
$|||(\bm{v},q)|||_S^2 := \alpha \|\nabla \bm{v}\|^2_{\bm{\mathcal{W}}}
 + \alpha^{-1}\|q\|_{\mathcal{W}}^2$, $C_6:= \big(\sqrt{2}(E_{\max} + \sqrt{d})\big)^{-1}$
  and $C_7$, $\gamma$ and $\Theta$ are as specified  in
 Theorem~\ref{mainth3}.
\end{theorem}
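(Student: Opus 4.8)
The plan is to repeat the residual argument behind Theorems~\ref{mainth1} and~\ref{mainth3}, specialised to the two-field system obtained from~\eqref{scm11a} by deleting the bilinear forms $c(\cdot,\cdot)$, $d(\cdot,\cdot)$ and the variable $\tilde{p}$. Setting $\mathcal{B}_S(\bm{u},p;\bm{v},q) := a(\bm{u},\bm{v})+b(\bm{v},p)+b(\bm{u},q)$, the error $(\bm{e}^{\bm{u}},e^p)$ satisfies $\mathcal{B}_S(\bm{e}^{\bm{u}},e^p;\bm{v},q)=\mathcal{R}^{\bm{u}}(\bm{v})+\mathcal{R}^p(q)$ with $\mathcal{R}^{\bm{u}}(\bm{v})=f(\bm{v})-a(\bm{u}_{h,\Lambda},\bm{v})-b(\bm{v},p_{h,\Lambda})$ and $\mathcal{R}^p(q)=-b(\bm{u}_{h,\Lambda},q)$, and I would measure these in the weighted dual norms $\|\mathcal{R}^{\bm{u}}\|_\ast=\sup_{\bm{v}\neq 0}|\mathcal{R}^{\bm{u}}(\bm{v})|/(\alpha^{1/2}\|\nabla\bm{v}\|_{\bm{\mathcal{W}}})$ and $\|\mathcal{R}^p\|_\ast=\sup_{q\neq 0}|\mathcal{R}^p(q)|/(\alpha^{-1/2}\|q\|_{\mathcal{W}})$ attached to $|||\cdot|||_S$. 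Since $b(\bm{u},q)=0$ by~\eqref{Stokes2}, we have $\mathcal{R}^p(q)=b(\bm{e}^{\bm{u}},q)$, so $\|\mathcal{R}^p\|_\ast=\alpha^{1/2}\|\nabla\cdot\bm{u}_{h,\Lambda}\|_{\mathcal{W}}=\eta_2$ exactly; there is no saturation or CBS loss associated with the pressure residual, which is why $\eta_2$ enters the upper bound unperturbed.

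For the lower bound I would use the boundedness of $a$ and $b$ from Lemma~\ref{tecre11} to obtain $\|\mathcal{R}^{\bm{u}}\|_\ast\le E_{\max}\alpha^{1/2}\|\nabla\bm{e}^{\bm{u}}\|_{\bm{\mathcal{W}}}+\sqrt{d}\,\alpha^{-1/2}\|e^p\|_{\mathcal{W}}$ and $\|\mathcal{R}^p\|_\ast\le\sqrt{d}\,\alpha^{1/2}\|\nabla\bm{e}^{\bm{u}}\|_{\bm{\mathcal{W}}}$. Adding these, pulling out the common factor $E_{\max}+\sqrt{d}$ (legitimate because $\sqrt{d}\le E_{\max}+\sqrt{d}$), and using $s+t\le\sqrt{2}(s^2+t^2)^{1/2}$ gives $\|\mathcal{R}^{\bm{u}}\|_\ast+\|\mathcal{R}^p\|_\ast\le\sqrt{2}(E_{\max}+\sqrt{d})\,|||(\bm{e}^{\bm{u}},e^p)|||_S$. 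Combined with $\eta=(\eta_1^2+\eta_2^2)^{1/2}\le(\|\mathcal{R}^{\bm{u}}\|_\ast^2+\|\mathcal{R}^p\|_\ast^2)^{1/2}\le\|\mathcal{R}^{\bm{u}}\|_\ast+\|\mathcal{R}^p\|_\ast$, where $\eta_1\le\|\mathcal{R}^{\bm{u}}\|_\ast$ follows from the two-field versions of Lemmas~\ref{mainth2} and~\ref{mainlam11}, this yields $C_6\,\eta\le|||(\bm{e}^{\bm{u}},e^p)|||_S$ with $C_6=(\sqrt{2}(E_{\max}+\sqrt{d}))^{-1}$; the improvement from $\sqrt{3}$ to $\sqrt{2}$ simply reflects that the residual $\mathcal{R}^{\tilde{p}}$ has disappeared.

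For the upper bound I would invoke the two-field analogue of Lemma~\ref{welpose11}: for every $(\bm{u},p)$ there exists $(\bm{v},q)$ with $|||(\bm{v},q)|||_S\le C_2^S\,|||(\bm{u},p)|||_S$ and $\mathcal{B}_S(\bm{u},p;\bm{v},q)\ge E_{\min}C_1^S\,|||(\bm{u},p)|||_S^2$. This is the argument of \cite[Lemma~2.3]{KPSpre} with $c,d$ suppressed; the constants are $C_1^S=\tfrac{1}{2}\min\{C_K,C_D^2/E_{\max}^2\}\ge C_1$ and $C_2^S=\sqrt{2(E_{\max}^2+C_D^2)}/E_{\max}\le C_2$, so $C_2^S/(C_1^S E_{\min})\le C_7$. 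Testing with such a $(\bm{v},q)$ and using the Cauchy--Schwarz inequality in $\mathbb{R}^2$ against the weights of $|||\cdot|||_S$ reproduces the Theorem~\ref{mainth1}-style estimate $|||(\bm{e}^{\bm{u}},e^p)|||_S\le C_7(\|\mathcal{R}^{\bm{u}}\|_\ast^2+\|\mathcal{R}^p\|_\ast^2)^{1/2}$; the two-field versions of Lemma~\ref{mainth2} (saturation constant $\Theta$) and Lemma~\ref{mainlam11} (CBS constant $\gamma_1\le\gamma$) give $\|\mathcal{R}^{\bm{u}}\|_\ast\le(1-\Theta^2)^{-1/2}(1-\gamma^2)^{-1/2}\eta_1$ while $\|\mathcal{R}^p\|_\ast=\eta_2$, and substituting together with $\eta=(\eta_1^2+\eta_2^2)^{1/2}$ yields the right-hand inequality with the stated constant.

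The only point that is not a verbatim specialisation is the two-field stability result and the accompanying bookkeeping $C_1^S\ge C_1$, $C_2^S\le C_2$ (so that $C_7$ remains valid, if not sharp). An alternative would be to deduce Theorem~\ref{mainth5} from Theorem~\ref{mainth3} by letting $\nu\to 1/2$, but that route needs the uniform-in-$\nu$ bounds of Section~\ref{EPWRD} to control $\|e^{\tilde{p}}\|_{\mathcal{W}}$, $\eta_3$ and the constants as $(\alpha\beta)^{-1}\to 0$, and it does not recover the sharper factor $C_6$; I therefore expect the direct re-derivation above to be the cleaner path, with the stability lemma as its one genuinely new ingredient.
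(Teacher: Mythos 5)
Your proposal is correct and follows exactly the route the paper intends: the paper offers no written proof beyond declaring Theorem~\ref{mainth5} an immediate consequence of Theorem~\ref{mainth3} under the formal specialisation $c=d=0$ with the third component dropped, and your argument is precisely that specialisation spelled out, including the sharper constant $C_6=\big(\sqrt{2}(E_{\max}+\sqrt{d})\big)^{-1}$ from having only two residuals and the observation that the two-field stability constants satisfy $C_1^S\ge C_1$, $C_2^S\le C_2$ so that $C_7$ remains valid. The only ingredient you supply beyond the paper's one-line justification is the explicit two-field analogue of Lemma~\ref{welpose11}, which is exactly the bookkeeping the paper leaves implicit.
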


\section{\rbl{Computational results}} \label{sec7}
In this section, we present two numerical examples to validate our theoretical results. 
In the first experiment, we consider a simple test problem with an exact solution and 
investigate the accuracy of the a posteriori error estimate $\eta$. In the second,
 we consider a problem where the Young's modulus depends on a 
 countably infinite set of parameters and investigate the performance 
 of the proposed adaptive algorithm.

% e_1 is NOT the standard deviation! 

\subsection{Exact solution, Dirichlet boundary \rbl{condition}} 
To define a problem of the form \eqref{os3} with an exact solution, 
we choose the spatial domain $D=(0,1)^2$ and impose a Dirichlet  condition 
on the displacement  $\bm{u}$ on the whole boundary.  
Hence, $\partial D= \partial D_D$ and $\partial D_{N} =\emptyset$. 
The uncertain Young's modulus \rbl{is modelled as  
$E := e_0 +  0.1 y_1$} where $y_1\in[-1,1]$ is the image of a mean zero uniform random variable. 
Hence, $E$ is spatially constant \rbl{and $e_0=1$ is the mean}. The body force
\begin{align}
\bm{f}=\begin{cases} 
f_1= -2\alpha \pi^3 \cos(\pi x_2) \sin(\pi x_2) (2\cos(2\pi x_1)-1),\\
f_2= 2\alpha \pi^3 \cos(\pi x_1) \sin(\pi x_1) (2\cos(2\pi x_2)-1),
\end{cases}
\end{align}
is chosen so that the exact displacement is 
\begin{align}
\bm{u}=\begin{cases} 
u_1= \pi\cos(\pi x_2)\sin(\pi x_2) \sin^2(\pi x_1)/E,\\
u_2= -\pi\cos(\pi x_1)\sin(\pi x_1) \sin^2(\pi x_2)/E,
\end{cases}
\end{align}
and \rbl{the exact pressure is} $p=\tilde{p}=0$.
 % In the definition of $E$, \rbl{we set} $e_{0}=1$ and $e_{1}=0.1$.
\rbl{For the spatial discretization  we  use  $\bm{Q}_2$--$P_{-1}$--$P_{-1}$ approximation on  
uniform grids of square elements.}\footnote{The combination $\bm{Q}_2$--$P_{-1}$ is one of the most 
effective inf--sup stable approximation pairs in a two-dimensional uniform refinement setting; 
see~\cite[Sect.\,3.3.1]{HDA}.}   
%That is,  to compute the SG-MFEM approximation, 
%we choose $\bm{V}_{h}$ to be the space of continuous piecewise biquadratic 
%functions and $W_{h}$ to be the space of discontinuous  piecewise linear functions.}
To compute the SG-MFEM solution, we choose $S_{\Lambda}$ to be the space of 
polynomials of degree less than or equal to $k$ in $y_{1}$ on $\Gamma=[-1,1]$. 
\rbl{To assess the quality of the error estimate we will examine} 
\begin{align}
\mbox{Effectivity index} =\frac{\eta}{{\cal E}}, \nonumber
\end{align}
where ${\cal E}$ is the error defined in Corollary \ref{corr_E}, as we vary the SG-MFEM discretisation parameters $h$ and $k$. \rbl{Results  for five representative values of the 
Poisson ratio $\nu$ are} presented in Tables \ref{small_sigma_p3} and \ref{small_sigma_p31}. 
To compute the error estimate $\eta$, we consider two types of finite element detail spaces 
$\widetilde{\bm{V}}_{h}$ $\widetilde{W}_{h}$ based on local bubble functions (option I, option II),
 as explained in Section~\ref{sec:adapt:alg}. Since we have only one parameter $y_1$, 
 the polynomial space $S_{\mathfrak{Q}}$ is chosen to be the set of polynomials of degree equal to $k+1$. The results confirm that our a posteriori error estimate 
 is robust with respect to the Poisson ratio (in the incompressible limit), the finite element 
 mesh size $h$ and the polynomial degree $k$ associated with the parametric approximation.

\begin{table}[ht!]
 \caption[]{{Test problem 1: Effectivity indices for fixed  polynomial degree $k=3.$}  }
    \label{small_sigma_p3}
\begin{center}
    \begin{tabular}{ | l | l | l | l | l | l | l | l | l | l | l | }
      \hline   
    \multicolumn{1}{| c |}{$h$}&{$\nu=.4$ }& $\nu=.49$&$\nu=.499$&$\nu=.4999$&$\nu=.49999$  \\
    \hline
     \multicolumn{6}{| c |}{option I}\\
    \hline
%    $2^{-2}$&$0.8339$&$0.8652$&$0.8688$&$0.8692$&$0.8692$\\
      $2^{-3}$&$0.8992$&$0.9361$&$0.9405$&$0.9409$&$0.9409$\\
      $2^{-4}$&$0.9196$&$0.9580$&$0.9625$&$0.9630$&$0.9630$\\
      $2^{-5}$&$0.9251$&$0.9639$&$0.9684$&$0.9689$&$0.9690$\\
      $2^{-6}$&$0.9267$&$0.9656$&$0.9701$&$0.9706$&$0.9706$\\
              \hline
    \multicolumn{6}{| c |}{option II}\\
            \hline
   %    $2^{-2}$&$1.2877$&$1.3074$&$1.3097$&$1.3100$&$1.3100$\\
      $2^{-3}$&$1.3311$&$1.3561$&$1.3591$&$1.3594$&$1.3594$\\
      $2^{-4}$&$1.3435$&$1.3701$&$1.3732$&$1.3735$&$1.3736$\\
      $2^{-5}$&$1.3468$&$1.3737$&$1.3769$&$1.3773$&$1.3773$\\
      $2^{-6}$&$1.3477$&$1.3748$&$1.3780$&$1.3783$&$1.3783$\\
     %  $2^{-7}$&$1.3497$&$1.3768$&$1.3800$&$1.3803$&$1.3803$\\
                    \hline
  \end{tabular}   
  \end{center}
\end{table}
\begin{table}[ht!]
 \caption[]{{Test problem 1: Effectivity indices for fixed finite element mesh size $h=2^{-6}$.} }
    \label{small_sigma_p31}
\begin{center}
    \begin{tabular}{ | l | l | l | l | l | l | l | l | l | l | l | }
      \hline   
    \multicolumn{1}{| c |}{$k$}&{$\nu=.4$ }& $\nu=.49$&$\nu=.499$&$\nu=.4999$&$\nu=.49999$  \\
    \hline
     \multicolumn{6}{| c |}{option I}\\
    \hline
    $2$&$0.9923$&$1.0287$&$1.0330$&$1.0334$&$1.0335$\\
      $3$&$0.9266$&$0.9656$&$0.9701$&$0.9706$&$0.9706$\\
      $4$&$0.9265$&$0.9654$&$0.9700$&$0.9704$&$0.9705$\\
      $5$&$0.9265$&$0.9654$&$0.9700$&$0.9704$&$0.9705$\\
   %   $6$&$0.9265$&$0.9654$&$0.9700$&$0.9704$&$0.9705$\\
              \hline
    \multicolumn{6}{| c |}{option II}\\
            \hline
       $2$&$1.3937$&$1.4198$&$1.4229$&$1.4233$&$1.4233$\\
      $3$&$1.3477$&$1.3748$&$1.3780$&$1.3783$&$1.3783$\\
      $4$&$1.3476$&$1.3747$&$1.3779$&$1.3782$&$1.3782$\\
      $5$&$1.3476$&$1.3747$&$1.3779$&$1.3782$&$1.3782$\\
     % $6$&$1.3476$&$1.3747$&$1.3779$&$1.3782$&$1.3782$\\
     %  $0.0078125$&$1.3497$&$1.3768$&$1.3800$&$1.3803$&$1.3803$\\
                    \hline
  \end{tabular}   
\end{center}
\end{table}

\subsection{\rbl{Singular problem,} mixed boundary conditions}
To test our error estimation strategy in a more realistic setting we next consider  a  problem with  a mixed boundary condition (so that $\partial D_N \neq \emptyset$).
Specifically, we take the unit square domain with a homogeneous Neumann boundary condition 
on the right edge $\partial D_N=\{1\}\times(0,1)$ and a zero Dirichlet boundary condition for the
 displacement on $\partial D_D=\partial D\setminus\partial D_N$.    The uncertain Young's modulus has mean value one and is given by the representation}\footnote{This parametric representation is commonly used in the literature (see, for example,~\cite{EIGEL2014247}). It characterises one of 
  several test problems that are  built into the S-IFISS toolbox~\cite{SIFISS}.}
\begin{align}\label{uncertainE}
E(\bm{x},\bm{y})= 1 + \sum_{m=1}^\infty \alpha_m \cos(2\pi \beta_1(m) x_1) \cos(2\pi \beta_2(m)x_2)y_m , \quad \bm{x} \in D, \bm{y} \in \Gamma,
\end{align}
 where $\Gamma=\Pi_{m=1}^{\infty} \Gamma_{m}$ and $y_m \in \Gamma_{m}:=[-1,1]$. For each $m\in\mathbb{N}$, 
 \begin{align}
 \beta_1(m)=m-k(m)(k(m)+1)/2\quad \mbox{and}\quad \beta_2(m)=k(m)-\beta_1(m)
 \end{align}
 where $k(m)=-1/2+\sqrt{1/4+2m}$ and $\alpha_m=\bar{\alpha}m^{-\tilde{\sigma}}$ for fixed $\tilde{\sigma} >1$ and $0<\bar{\alpha}< 1/\zeta(\tilde{\sigma})$,  where $\zeta$ is the Riemann zeta function.
The sparse  (Galerkin) high-dimensional system of linear equations 
that is generated at each step of the adaptive algorithm   is solved 
using a bespoke  MINRES  solver (EST\_MINRES) in combination with the efficient
preconditioning strategy presented in~\cite{KPSpre}.

We present results for the case of  a horizontal body force  $\bm{f}=(0.1,0)^{\top}$. This
generates an  exact displacement solution that is symmetric about the line $y=1/2$. 
The problem has limited regularity in the compressible case: for $\nu=0.4$ 
there are strong singularities at the two corners %$(1,1)$ and $(1,0)$
where the boundary condition changes from essential to natural.
% and weaker singularities at the other two corners. 
The singularities become progressively weaker in the 
incompressible limit and their effect on the solution is  imperceptible
 when $\nu = 0.49999$.

To ensure a reasonable level of accuracy in the singular cases we used $\bm{P}_2$--$P_{1}$--$P_{1}$ triangular approximation\footnote{The
(Taylor--Hood) combination $\bm{P}_2$--$P_{1}$ is the best known inf--sup stable approximation pair;
see~\cite[Sect.\,3.3.3]{HDA}.}
in combination with spatial adaptivity. The  number of  displacement degrees of freedom in the initial mesh ${\cal T}_{h_0}$
was $162$. The adaptive algorithm was terminated when the total number of degrees  of freedom (spatial $\times$ parametric)
exceeded $5\cdot 10^5$  when $\nu=0.4$ and $10^5$ when $\nu=0.49999$.
 We checked the convergence of Algorithm~\ref{algor_1} for  two choices of coefficient $\alpha_m$ in (\ref{uncertainE}):  $\tilde{\sigma}=2$ (slow decay)  
 and $\tilde{\sigma}=4$ (fast decay). 
Results for the slow decay case are shown in Figs.~\ref{ex2uqv01v02errn11}--\ref{ex2uqv01v02meshi11}.
We make the following observations.
 \begin{itemize}
 \item
The rate of convergence is $O(n^{-1/2})$ (where $n$ is the total number of degrees of freedom)
and is  independent of the Poisson ratio.
% We note that the energy  convergence  rate is 
%{\it suboptimal\/}---the optimal rate (achievable when solving a ${\cal H}^3$--regular problem 
%with a solution  dependence that is analytic for all parameter combinations) is $O(n^{-1})$.
%\rblf{The refinement
% weighting factor $\tau=\sqrt{2}$  is the reason for the  perceptible ``gap''  between the two distinct
% error contributions.

 \item At all refinement steps, the error estimate $\eta$ is dominated by the spatial error contribution  $\eta_{h^*,\Lambda}$
 in the compressible case, that is when $\nu=0.4$. In contrast,  the parametric error contribution dominates 
at  several steps of the algorithm  in the nearly incompressible case.
 \item
Looking at Fig.~\ref{ex2uqv01v02acraci11} we see that  twice as many parameters (and indices) are activated in the nearly incompressible case. The number of adaptive steps would be  reduced if we were to probe more than one additional parameter when constructing the detail index set \eqref{indsetQ}.  The computational experiments reported in \cite{cpb18} show that a more efficient algorithm may be obtained in the slow decay case if multiple parameters are activated at every step.
\item The number of displacement degrees of freedom in the mesh when
the algorithm terminated  was  $26,094$  when $\nu=0.4$ and  $2,070$
in the nearly incompressible case. These meshes are
shown in Fig.~\ref{ex2uqv01v02meshi11} and clearly illustrate the
influence of the spatial singularities in the compressible case.
   \end{itemize}
%   \bigskip

 \begin{figure}[th!]
 \begin{center}
\includegraphics[width=.43\textwidth]{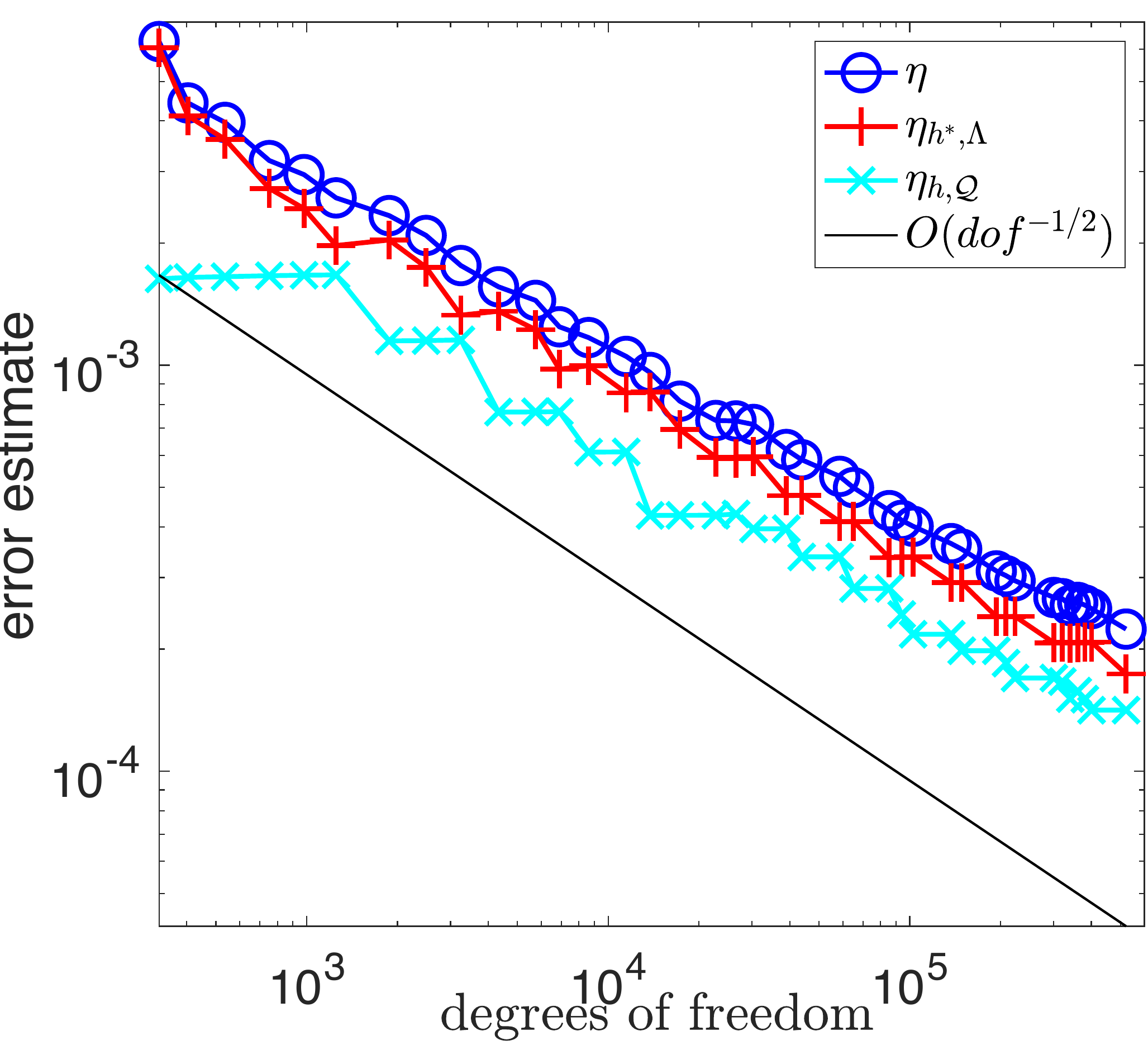} 
\includegraphics[width=.43\textwidth]{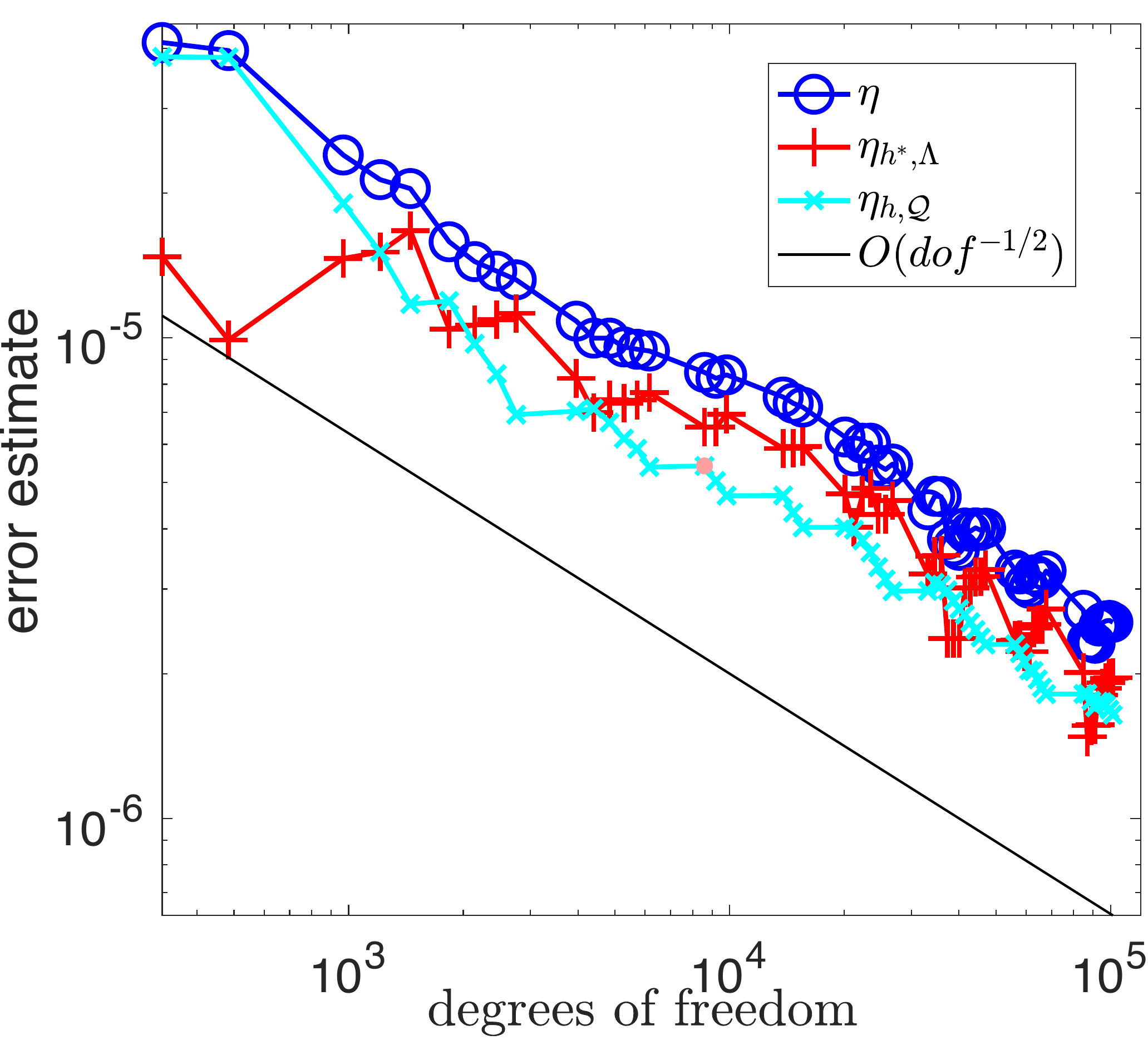}
\caption{{Test problem 2 (slow decay, $\tilde{\sigma}=2$): Estimated error at each step 
of Algorithm~\ref{algor_1} for $\nu=.4$ (left);   $\nu=0.49999$  (right). } }
\label{ex2uqv01v02errn11}
\end{center}
\medskip
%\end{figure}

%\begin{figure}[th!]
\begin{center} 
\includegraphics[width=.47\textwidth]{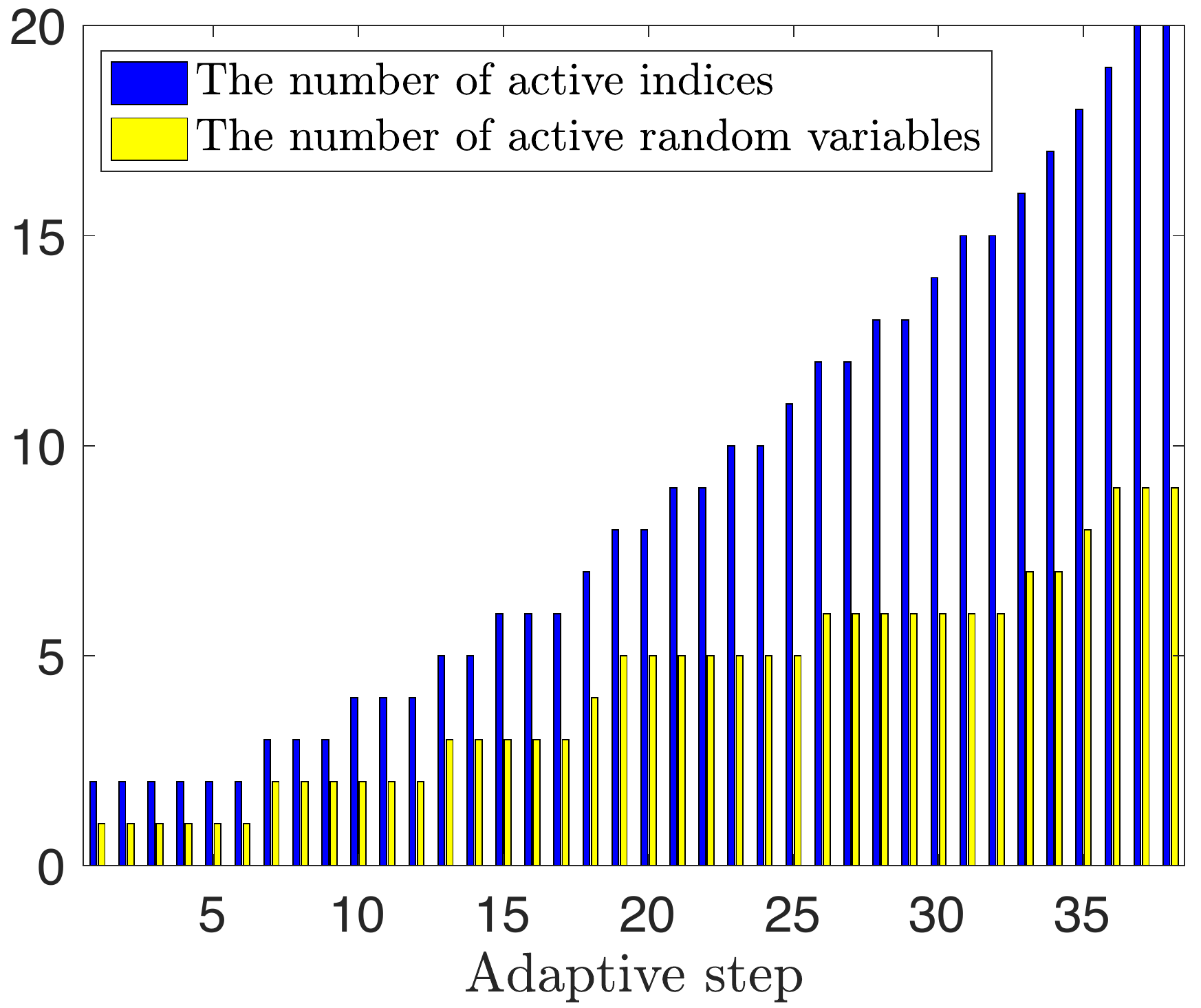}
\includegraphics[width=.47\textwidth]{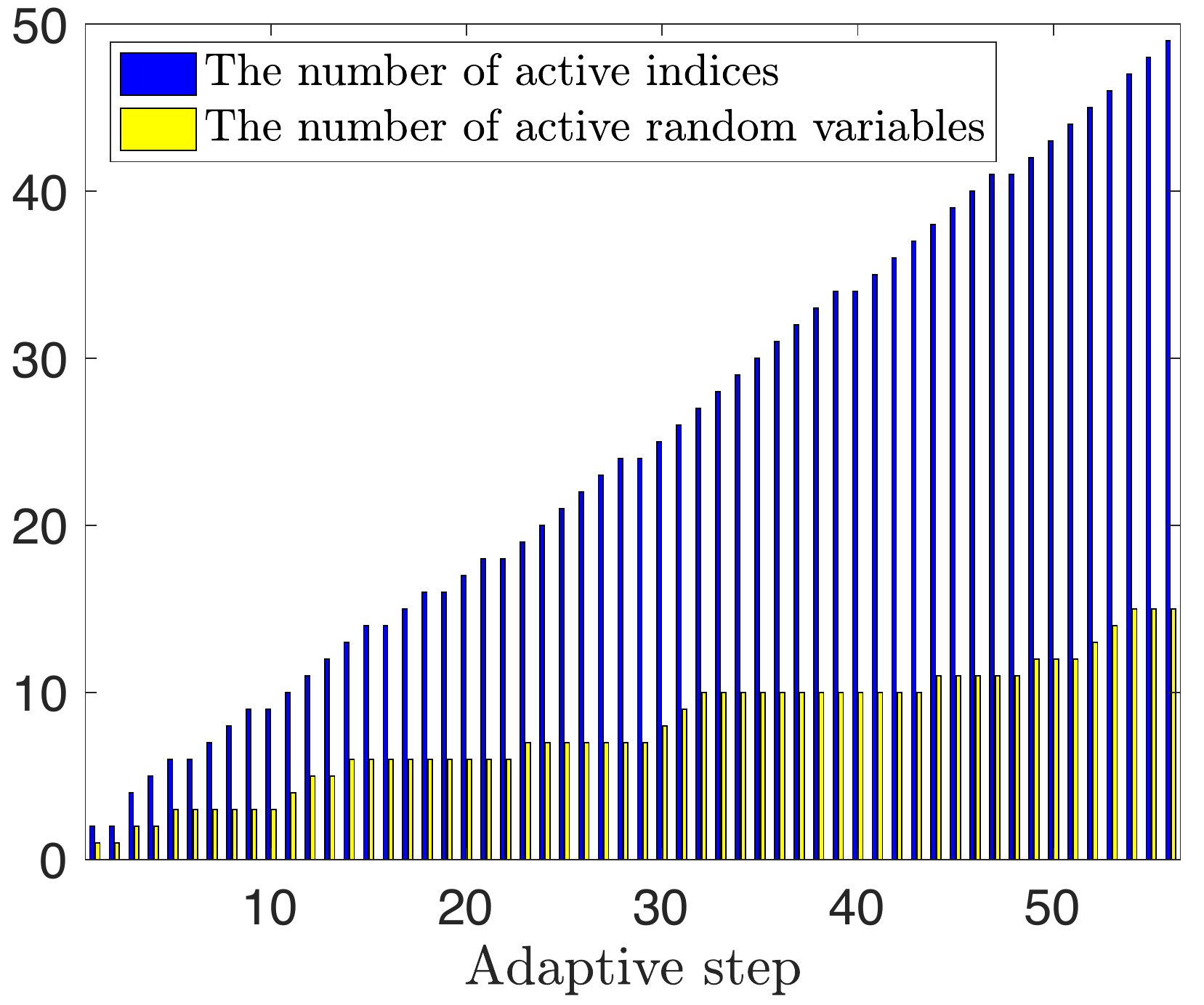}
\caption{{Test problem 2 (slow decay, $\tilde{\sigma}=2$): The number of active multi-indices $\balpha$ 
and active random variables $y_{m}$ at each step of the Algorithm~\ref{algor_1} for  $\nu=.4$ (left);   $\nu=0.49999$  (right). } }
\label{ex2uqv01v02acraci11}
\end{center}
%\end{figure}
\medskip

%\begin{figure}[th!]
\begin{center} 
\includegraphics[width=.42\textwidth]{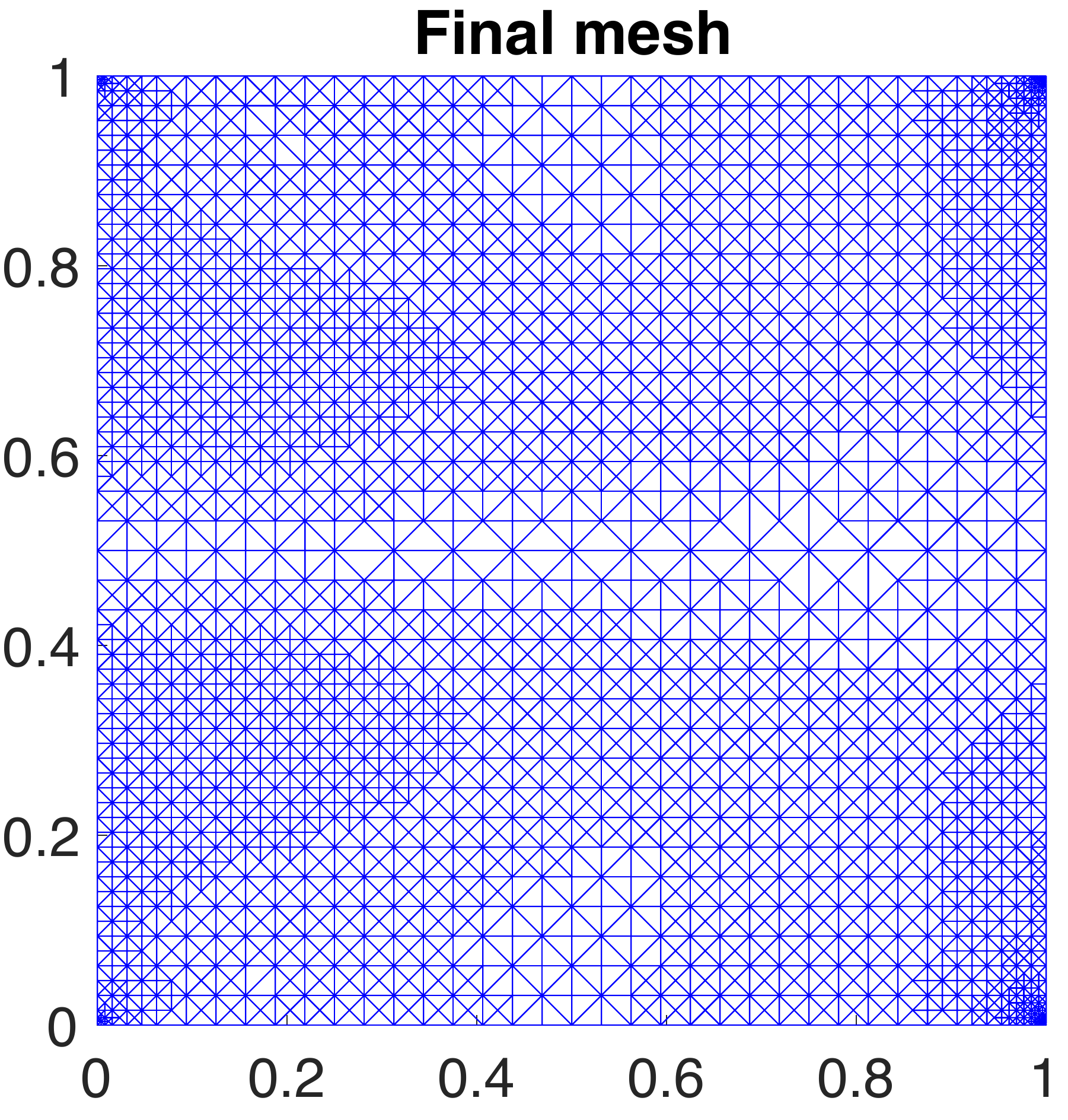}
\includegraphics[width=.42\textwidth]{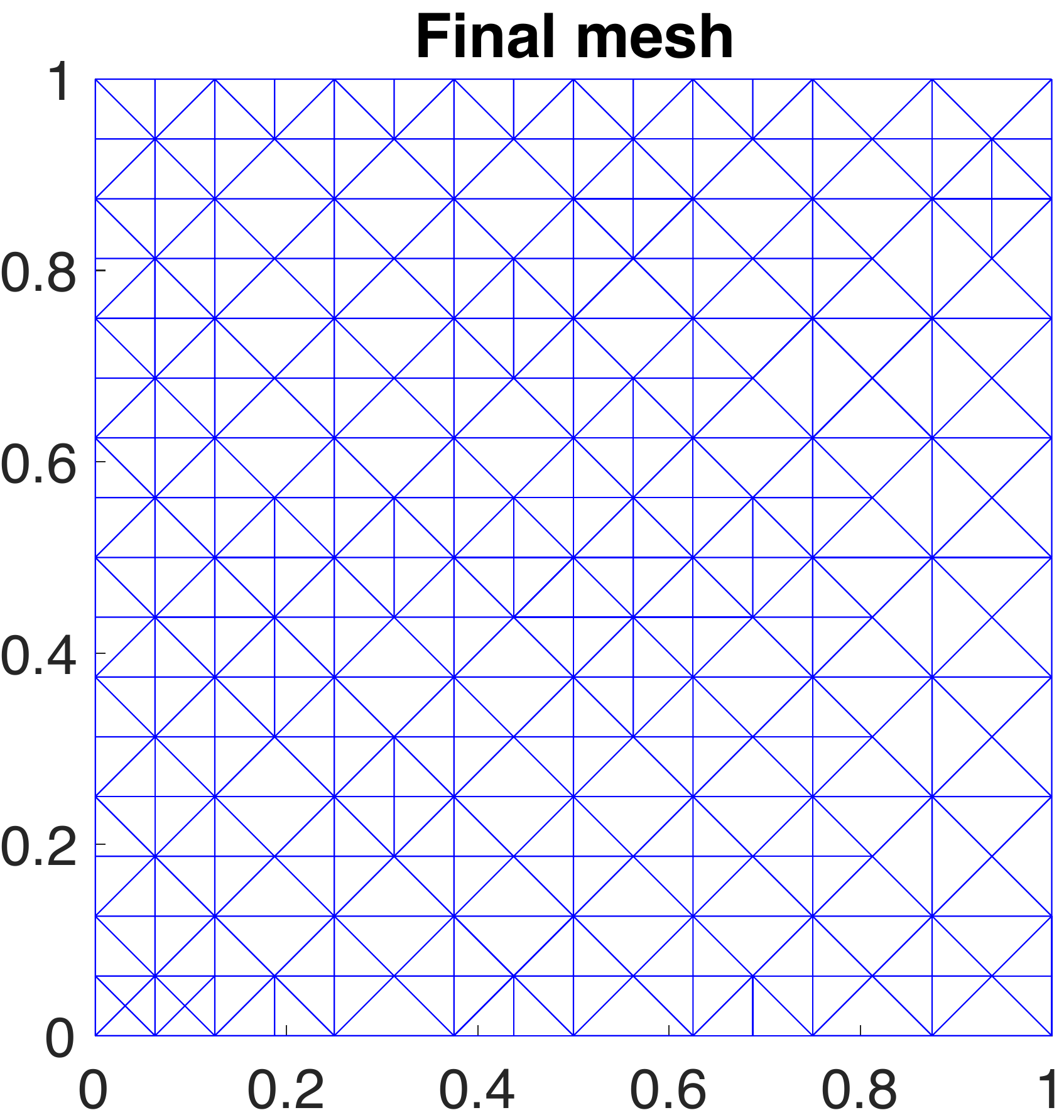}
\caption{{Test problem 2 (slow decay, $\tilde{\sigma}=2$):  triangular mesh at the step when
the  target number of degrees of freedom was reached for $\nu=.4$ (left);   $\nu=0.49999$  (right). } }
\label{ex2uqv01v02meshi11}
\end{center}
\end{figure}

%\newpage 

\rbl{Analogous results obtained in the fast decay case are shown in  Figs.~\ref{ex2uqv01v02errn12}--\ref{ex2uqv01v02acraci12}.}  We make two final observations.
 \begin{itemize}
 \item Once again, the rate of convergence is $O(n^{-1/2})$ (where $n$ is the total number of degrees of freedom)
and is  independent of the Poisson ratio. 
  \item Comparing  Fig.~\ref{ex2uqv01v02acraci11} with Fig.~\ref{ex2uqv01v02acraci12} we see that the number of parameters (and indices) that are activated in the fast decay case is 
much less than the number that were activated in the slow decay case for the same total number of degrees of freedom. 
\end{itemize}

\begin{figure}[th!]
\begin{center} 
\includegraphics[width=.42\textwidth]{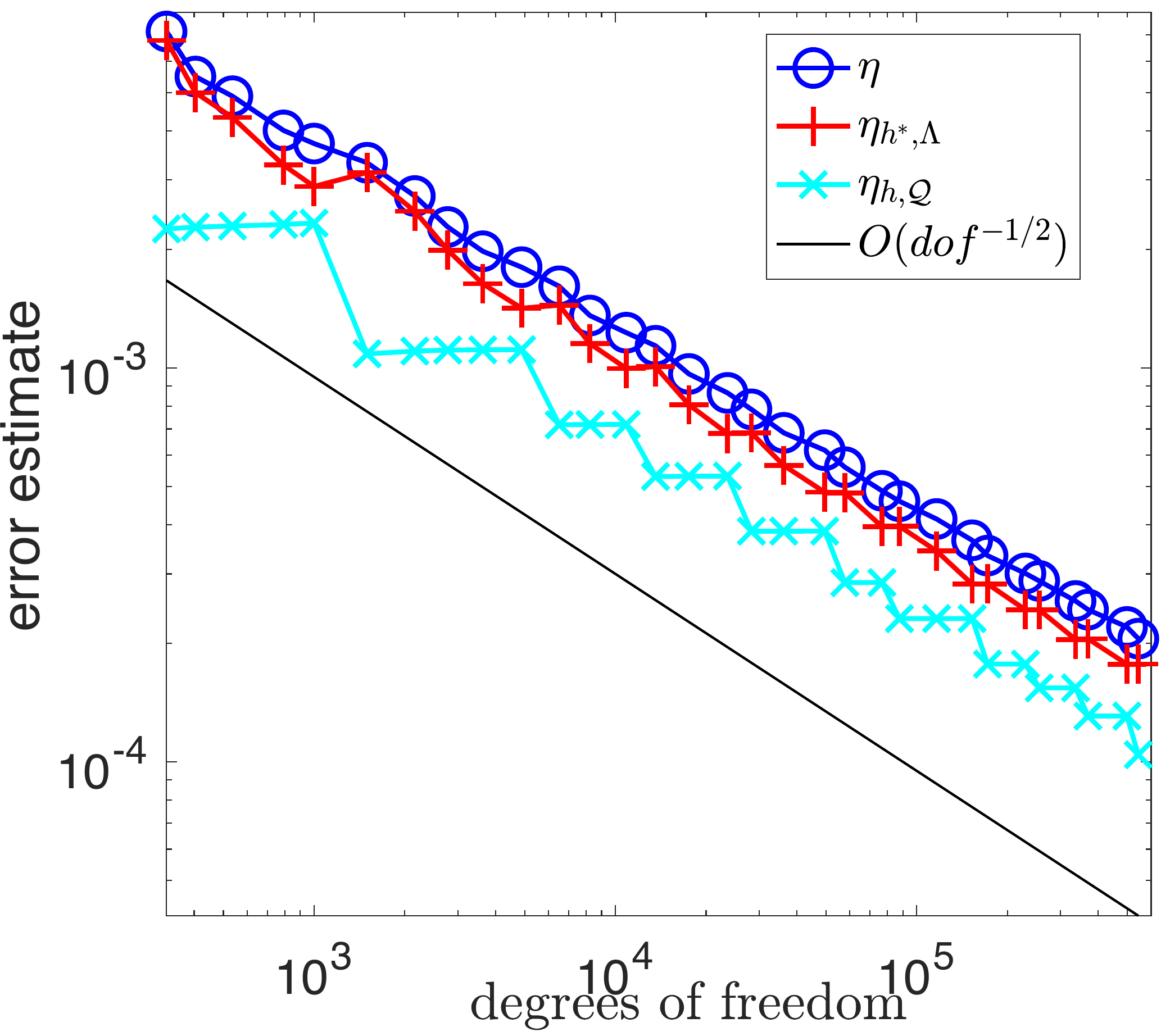}
\includegraphics[width=.42\textwidth]{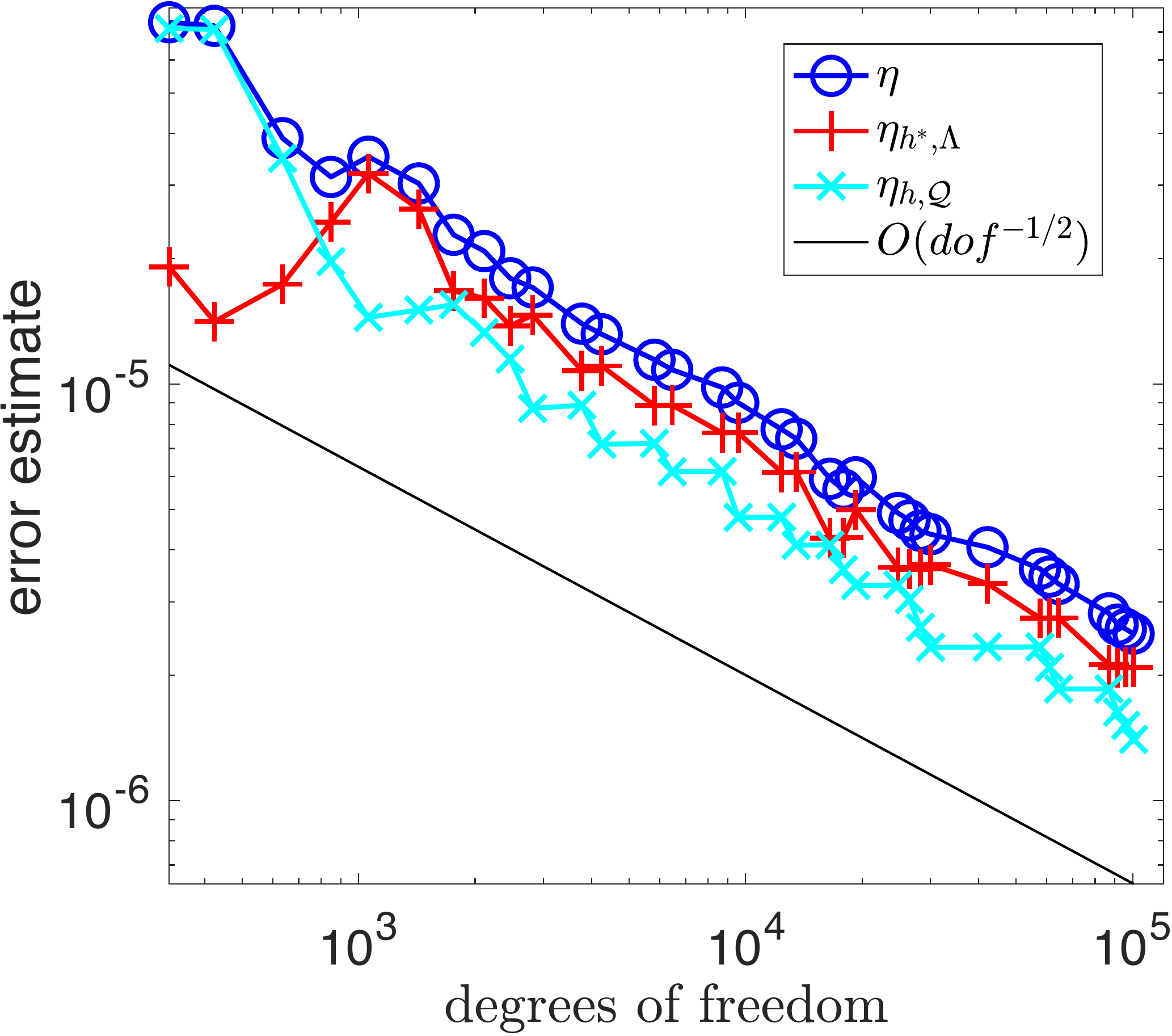}
\caption{{Test problem 2 (fast decay, $\tilde{\sigma}=4$): 
Estimated error at each step of Algorithm~\ref{algor_1} for $\nu=.4$ (left);   $\nu=0.49999$  (right). } }
\label{ex2uqv01v02errn12}
\end{center}
%\end{figure}
\smallskip

% \begin{figure}[th!]
\begin{center} 
\includegraphics[width=.45\textwidth]{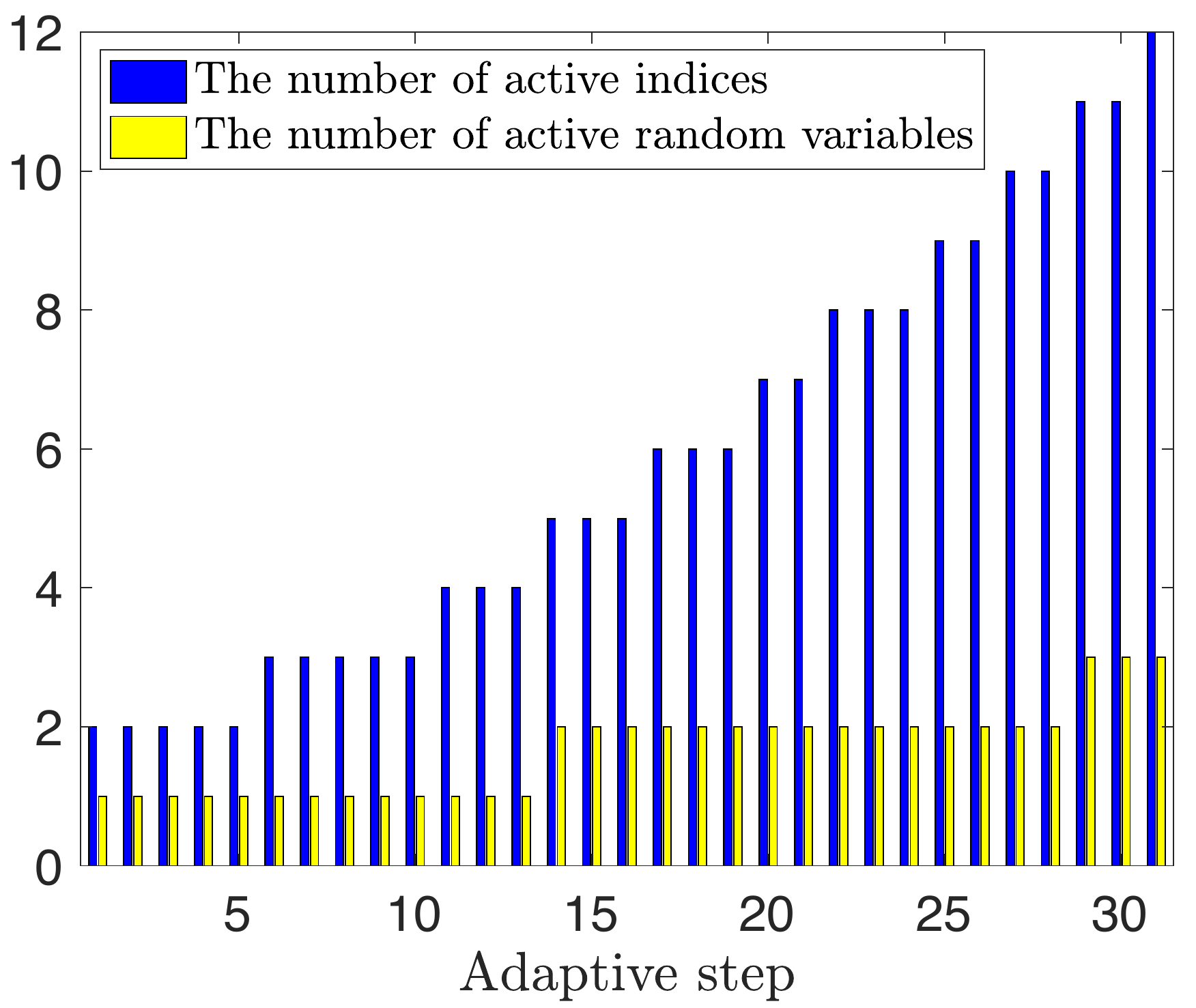}
\includegraphics[width=.45\textwidth]{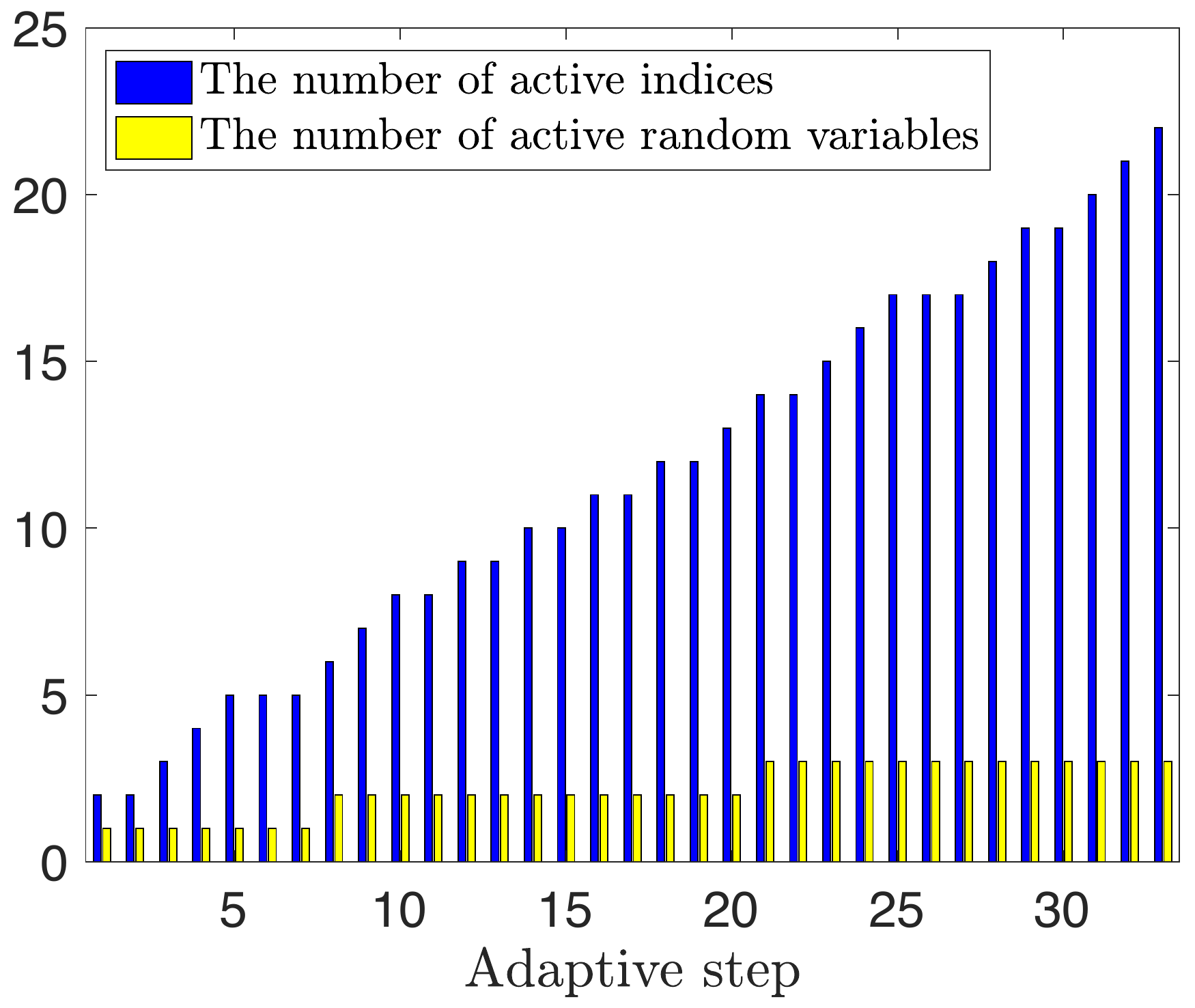}
\caption{{Test problem 2 (fast decay, $\tilde{\sigma}=4$): 
The number of active multi-indices $\balpha$ and active random variables $y_{m}$ 
at each step of the Algorithm~\ref{algor_1} for  $\nu=.4$ (left);   $\nu=0.49999$  (right). } }
\label{ex2uqv01v02acraci12}
\end{center}
\end{figure}

\section{Summary} 
Our thesis is that efficient adaptive algorithms hold the key to effective computational solution of PDEs of elliptic type with uncertain
material coefficients. This paper has two  important contributions,  building on earlier work  for scalar diffusion problems.
First, we have shown that  mixed formulations of 
elasticity equations with parametric uncertainty can be solved in a black-box fashion. 
We believe that this opens the door to practical engineering analysis of
structures with uncertain material coefficients.
Second, in contrast to other work in this area, which typically estimates a posteriori errors 
by taking norms of residuals, our approach can give accurate proxies of 
potential error reductions that would occur if different refinement strategies were pursued.   
Extensive numerical testing  confirms that effectivity indices close to unity can be maintained throughout the refinement process.

\bibliographystyle{siam}
\bibliography{kps_upd}
\end{document}